\newcommand{\EE}{\mathbb{E}}
\newcommand{\FF}{\mathbb{F}}
\newcommand{\GG}{\mathbb{G}}
\newcommand{\NN}{\mathbb{N}}
\newcommand{\PP}{\mathbb{P}}
\newcommand{\ZZ}{\mathbb{Z}}
\newcommand{\cB}{\mathcal{B}}
\newcommand{\cC}{\mathcal{C}}
\newcommand{\cE}{\mathcal{E}}
\newcommand{\cG}{\mathcal{G}}
\newcommand{\cH}{\mathcal{H}}
\newcommand{\cM}{\mathcal{M}}
\newcommand{\cU}{\mathcal{U}}
\newcommand{\cW}{\mathcal{W}}
\newcommand{\fA}{\mathfrak{A}}
\newcommand{\fb}{\mathfrak{b}}
\newcommand{\fe}{\mathfrak{e}}
\newcommand{\ff}{\mathfrak{f}}
\newcommand{\fc}{\mathfrak{c}}
\newcommand{\fg}{\mathfrak{g}}
\newcommand{\fh}{\mathfrak{h}}
\newcommand{\fn}{\mathfrak{n}}
\newcommand{\fsl}{\mathfrak{sl}}
\newcommand{\ft}{\mathfrak{t}}
\newcommand{\fu}{\mathfrak{u}}
\newcommand{\msrke}{\mathsf{rk}_p}
\newcommand{\dact}{\boldsymbol{.}}
\newcommand{\lra}{\longrightarrow}
\DeclareMathOperator{\ad}{ad}
\DeclareMathOperator{\Aut}{Aut}
\DeclareMathOperator{\Char}{char}
\DeclareMathOperator{\cx}{cx}
\DeclareMathOperator{\Der}{Der}
\DeclareMathOperator{\Ext}{Ext}
\DeclareMathOperator{\Gr}{Gr}
\DeclareMathOperator{\HH}{H}
\DeclareMathOperator{\height}{ht}
\DeclareMathOperator{\Hom}{Hom}
\DeclareMathOperator{\im}{im}
\DeclareMathOperator{\id}{id}
\DeclareMathOperator{\Lie}{Lie}
\DeclareMathOperator{\modd}{mod}
\DeclareMathOperator{\mstor}{\mathsf{Tor}}
\DeclareMathOperator{\msP}{\mathsf{P}}
\DeclareMathOperator{\PSL}{PSL}
\DeclareMathOperator{\pt}{Pt}
\DeclareMathOperator{\Rad}{Rad}
\DeclareMathOperator{\rk}{rk}
\DeclareMathOperator{\SL}{SL}
\DeclareMathOperator{\Spec}{Spec}
\DeclareMathOperator{\Proj}{Proj}
\numberwithin{equation}{section}
\newtheorem{Theorem}{Theorem}[section]
\newtheorem{Lemma}[Theorem]{Lemma}
\newtheorem{Corollary}[Theorem]{Corollary}
\newtheorem{Proposition}[Theorem]{Proposition}
\theoremstyle{Theorem}
\newtheorem{Thm}{Theorem}[subsection]
\newtheorem{Lem}[Thm]{Lemma}
\newtheorem{Prop}[Thm]{Proposition}
\newtheorem{Cor}[Thm]{Corollary}
\newtheorem*{thm*}{Theorem A}
\newtheorem*{thm**}{Corollary}
\newtheorem*{thm***}{Theorem B} 
\theoremstyle{remark}
\newtheorem*{Remark}{Remark}
\newtheorem*{Remarks}{Remarks}
\newtheorem*{Definition}{Definition}
\newtheorem*{Example}{Example}
\newtheorem*{Examples}{Examples}
\numberwithin{equation}{section}
\begin{document}

\title{Finite group schemes of $p$-rank $\le 1$}

\author[Hao Chang and Rolf Farnsteiner]{Hao Chang \lowercase{and} Rolf Farnsteiner}
\address{Mathematisches Seminar, Christian-Albrechts-Universit\"at zu Kiel, Ludewig-Meyn-Str. 4, 24098 Kiel, Germany}
\email{chang@math.uni-kiel.de, rolf@math.uni-kiel.de}
\subjclass[2010]{Primary 17B50, 14L15, Secondary 16G60}
\date{\today}

\begin{abstract} Let $\cG$ be a finite group scheme over an algebraically closed field $k$ of characteristic $\Char(k)=p\ge 3$. In generalization of the familiar notion from the modular representation 
theory of finite groups, we define the $p$-rank $\msrke(\cG)$ of $\cG$ and determine the structure of those group schemes of $p$-rank $1$, whose linearly reductive radical is trivial. The most 
difficult case concerns infinitesimal groups of height $1$, which correspond to restricted Lie algebras. Our results show that group schemes of $p$-rank $\le 1$ are closely related to those being 
of finite or domestic representation type. \end{abstract}

\maketitle

\section*{Introduction}
Let $(\fg,[p])$ be a finite-dimensional restricted Lie algebra over an algebraically closed field $k$ of characteristic $\Char(k)\!=\!p\!>\!0$. In the representation
theory of $(\fg,[p])$ the closed subsets of the nullcone
\[ V(\fg) :=\{ x \in \fg \ ; \ x^{[p]}=0\}\]
play an important role. The elements of its projectivization $\PP(V(\fg)) \subseteq \PP(\fg)$ can be  construed as one-dimensional subalgebras
of $\fg$ which are annihilated by the $p$-map. More generally, one can consider for each $r \in \NN$ the closed subset
\[ \EE(r,\fg) := \{ \fe \in \Gr_r(\fg) \ ; \ [\fe,\fe]=(0), \, \fe \subseteq V(\fg)\}\]
of the Grassmannian $\Gr_r(\fg)$ of $r$-planes in $\fg$. These projective varieties were first systematically studied by Carlson-Friedlander-Pevtsova in \cite{CFP15}.
The elements of the variety $\EE(r,\fg)$ are analogs of $p$-elementary abelian subgroups and it is therefore natural to explore their utility for the representation
theory of $(\fg,[p])$. 

In analogy with the modular representation theory of finite groups, we define the \textit{$p$-rank} of $\fg$, via
\[\msrke(\fg) := \max \{ r \in \NN_0 \ ; \ \EE(r,\fg) \ne \emptyset\}.\]
Restricted Lie algebras of $p$-rank $\msrke(\fg)=0$, which are the analogs of those finite groups, whose group algebras are semi-simple, were determined in early work by Chwe \cite{Ch65}, 
who showed that $\msrke(\fg)=0$ if and only if $\fg$ is a torus (i.e., $\fg$ is abelian with bijective $p$-map). In view of \cite{Ho54}, this is equivalent to the semi-simplicity of the
restricted enveloping algebra $U_0(\fg)$ of $\fg$. 

In recent work \cite{Fa14}, invariants for certain $(\fg,[p])$-modules were introduced that turned out to be completely determined on $\EE(2,\fg)$. This raised the question concerning properties of this variety, the most basic one pertaining to criteria for $\EE(2,\fg)$ being non-empty, that is, $\msrke(\fg)\!\ge\!2$. Contrary to Lie algebras of $p$-rank $0$, the answer somewhat depends
on the characteristic of $k$. 

\bigskip

\begin{thm*} Suppose that $p\!\ge\!5$. If $\fg$ affords a self-centralizing torus, then $\msrke(\fg)=1$ if and only if
\[ \fg/C(\fg) \cong \fsl(2), \fb_{\fsl(2)}, \fb_{\fsl(2)}^{-1},\]
with the center $C(\fg)$ of $\fg$ being non-zero in the latter case. \end{thm*}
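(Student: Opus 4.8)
The plan is to analyze $\fg$ through its root-space decomposition relative to the self-centralizing torus $\ft$ and to convert $\msrke(\fg)=1$ into constraints on the roots. Since $\ft$ is self-centralizing we have $\fg=\ft\oplus\bigoplus_{\alpha\in\Phi}\fg_\alpha$ with $\fg_0=C_\fg(\ft)=\ft$, and $C(\fg)=\bigcap_{\alpha\in\Phi}\ker\alpha\subseteq\ft$ is a central torus; in particular $C(\fg)\cap V(\fg)=(0)$. Writing $\bar\fg:=\fg/C(\fg)$, the first thing I would record is a dictionary between $\fg$ and $\bar\fg$: any $\fe\in\EE(2,\fg)$ injects into $\bar\fg$ (as $\fe\cap C(\fg)\subseteq\fe\cap V(\fg)=(0)$), giving $\msrke(\fg)\le\msrke(\bar\fg)$; conversely a plane $\bar\fe=\langle\bar x,\bar y\rangle\in\EE(2,\bar\fg)$ lifts to a plane of $\EE(2,\fg)$ exactly when the commutator $[x,y]$ of $[p]$-nilpotent representatives, which automatically lies in $C(\fg)$, vanishes. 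Thus the whole problem is governed by $\bar\fg$ together with the central alternating form $(\bar x,\bar y)\mapsto[x,y]\in C(\fg)$.

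Next I would classify the centerless $\bar\fg$ with $\msrke(\bar\fg)=1$. For $x\in\fg_\alpha$ one has $x^{[p]}\in\ker\alpha\subseteq\ft$, so the Jordan decomposition yields a $[p]$-nilpotent part $x_n\in\fg_\alpha\oplus\ft$ projecting to a nonzero root vector. The key claim is that $\msrke=1$ forces $\Phi\subseteq\{\alpha,-\alpha\}$ with $\dim\fg_{\pm\alpha}=1$: whenever two roots satisfy $\alpha+\beta\notin\Phi\cup\{0\}$ one produces a commuting pair of $[p]$-nilpotent elements (so $\EE(2,\bar\fg)\ne\emptyset$), and here $p\ge5$ is used to kill the strings $\alpha,2\alpha$ via $[\fg_\alpha,\fg_{2\alpha}]\subseteq\fg_{3\alpha}=(0)$ and to exclude $\dim\fg_\alpha\ge2$. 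One then splits on $\fg_\alpha+\fg_{-\alpha}+[\fg_\alpha,\fg_{-\alpha}]$: if $-\alpha\notin\Phi$ then $\bar\fg\cong\fb_{\fsl(2)}$, while if $[\fg_\alpha,\fg_{-\alpha}]$ spans a non-central toral line then rescaling builds an $\fsl(2)$-triple and $\bar\fg\cong\fsl(2)$. For the converse one checks directly that in $\fsl(2)$ the centralizer of a nilpotent element is a line, and that $V(\fb_{\fsl(2)})=\langle e\rangle$, so both have $p$-rank $1$, a property preserved under central toral extensions.

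The remaining case is $\msrke(\fg)=1<\msrke(\bar\fg)$, where the central form must do genuine work. Here the same reduction leaves $\Phi=\{\alpha,-\alpha\}$ but with $[\fg_\alpha,\fg_{-\alpha}]\subseteq C(\fg)$, so $\bar\fg\cong\fb_{\fsl(2)}^{-1}$, the three-dimensional algebra $\langle h,e,f\rangle$ with $[h,e]=e$, $[h,f]=-f$, $[e,f]=0$ and $e^{[p]}=f^{[p]}=0$. This algebra is distinguished by the fact that its nullcone is the single plane $V(\fb_{\fsl(2)}^{-1})=\langle e,f\rangle$, which is abelian, so $\msrke(\fb_{\fsl(2)}^{-1})=2$. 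To have $\msrke(\fg)=1$ this one plane must fail to lift, i.e. $[e,f]$ must be a nonzero element of $C(\fg)$; hence $C(\fg)\ne(0)$. Conversely, for a central extension $\fg$ with $[e,f]=z\ne0\in C(\fg)$ I would verify, using $p\ge5$ to see that $\ad(ae+bf)$ is $[p]$-trivial, that $V(\fg)=\langle e,f\rangle$ is now a non-abelian plane, whence $\EE(2,\fg)=\emptyset$ and $\msrke(\fg)=1$.

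The step I expect to be the main obstacle is the rank reduction in the centerless classification: controlling root vectors that are not themselves $[p]$-nilpotent (whose semisimple parts are absorbed into $\ft$) and proving that the only commutator the center can absorb is that of a single opposite pair $\pm\alpha$, so that every configuration other than the three listed exhibits an honestly commuting pair of $[p]$-nilpotent elements. Equivalently, one must show $\msrke(\bar\fg)\le2$ and that $\fb_{\fsl(2)}^{-1}$ is the unique centerless self-centralizing-torus algebra whose abelian nullcone is a single $2$-plane; pinning this down, together with the finer structure theory of the preceding sections, is where the hypothesis $p\ge5$ and the hardest computations will be concentrated.
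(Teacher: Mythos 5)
Your overall strategy is the same as the paper's (root space decomposition relative to the self-centralizing torus, extraction of commuting $p$-nilpotent pairs from roots whose sum is not a root, reduction to a single pair $\pm\alpha$, then a case split on $[\fg_\alpha,\fg_{-\alpha}]$), and your dictionary between $\EE(2,\fg)$ and $\EE(2,\fg/C(\fg))$ matches the paper's Lemma \ref{CE1}. However, there are two genuine gaps at exactly the points that carry the weight of the proof. First, the commuting-pair mechanism is not as direct as you state: root vectors are not $p$-nilpotent in general, only $x_\alpha^{[p]}\in\ker\alpha$, and if you correct by toral elements $t_\alpha\in\ker\alpha$, $t_\beta\in\ker\beta$ to get nilpotent representatives, then $[x_\alpha-t_\alpha,x_\beta-t_\beta]=\alpha(t_\beta)x_\alpha-\beta(t_\alpha)x_\beta$ need not vanish even when $[x_\alpha,x_\beta]=0$. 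The paper circumvents this by proving the much stronger centralizer statement $C_\fg(x_\alpha)=(\ker\alpha)\oplus kx_\alpha$ (Proposition \ref{CR1}(2)), whose proof requires that $V(C_\fg(x_\alpha))$ be a line together with the structure theorem \cite[(4.3)]{Fa95} for such algebras; this is not a formality you can wave at.

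Second, and more seriously, your claim that $p\ge 5$ ``kills the string $\alpha,2\alpha$ via $[\fg_\alpha,\fg_{2\alpha}]\subseteq\fg_{3\alpha}=(0)$'' is circular: $3\alpha$ is an element of $\FF_p\alpha$ and there is no a priori reason for $\fg_{3\alpha}$ to vanish. In fact the closure property you invoke (sums of distinct roots are roots or zero) runs the other way: if $\alpha$ and $2\alpha$ are roots, it forces $3\alpha$, hence all of $\FF_p^\times\alpha$, into the root system. Ruling out this ``full one-section'' configuration is precisely the hard core of the paper's Lemma \ref{CR2}: one sets $\fh=\fg_\alpha\oplus\ft\oplus\fg_{-\alpha}$, assumes $V=\fg/\fh\ne(0)$, and derives a contradiction using Strade's theorem on dimensions of simple modules of solvable Lie algebras (for the case $[\fg_\alpha,\fg_{-\alpha}]\subseteq\ker\alpha$), and Premet's classification of indecomposable $U_0(\fsl(2))$-modules of dimension $<p$ together with a projectivity/Steinberg-module argument (for the $\fsl(2)$ case). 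You explicitly flag this step as ``the main obstacle'' and defer it, so the proposal is an outline whose decisive step is missing rather than a complete proof. (A smaller point: the exclusion of $\dim_k\fg_\alpha\ge 2$ is also only asserted; the paper derives it from the Projective Dimension Theorem via Lemma \ref{RSD1} and Proposition \ref{CR1}(3),(4).)
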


\bigskip
\noindent
Here $\fb_{\fsl(2)}$ denotes the standard Borel subalgebra of $\fsl(2)$, while $\fb_{\fsl(2)}^{-1}$ is a one-dimensional non-split abelian extension of $\fb_{\fsl(2)}$.
For $p\!=\!3$, or results are not as definitive, as more algebras can occur if $C(\fg)\ne (0)$. For centerless or perfect Lie algebras, however, we have
complete results for $p\!\ge\!3$.

Our main techniques are based on invariants of $\fg$ that are derived from generic properties of root space decompositions relative to tori of maximal dimension.  
As usual, these methods are more effective for Lie algebras of algebraic groups, where a complete classification is fairly straightforward, cf.\ Theorem
\ref{AL1}.  In the general case, our technical assumption concerning the existence of a self-centralizing torus mainly rules out solvable Lie algebras affording 
at most one root: As we show in Section \ref{S:ER}, a Lie algebra of $p$-rank $\msrke(\fg)\!\le\!1$ which has at least two roots, always possesses such a torus. 
This readily implies:

\bigskip

\begin{thm**} Suppose that $p\!\ge\!5$. If $\fg$ affords a torus of maximal dimension, whose set of roots has at least three elements, then $\msrke(\fg)\!\ge\!2$.  \end{thm**}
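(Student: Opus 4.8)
The plan is to prove the contrapositive: assuming $\msrke(\fg)\le 1$, I will show that an \emph{arbitrary} torus of maximal dimension $T\subseteq\fg$ has at most two roots, which negates the hypothesis. Write $\fg=\fg_0\oplus\bigoplus_{\alpha\in R}\fg_\alpha$ for the root space decomposition of the adjoint action of $T$, so that $R$ is the set of roots and $|R|$ is the quantity to be bounded. The guiding idea is that $\msrke(\fg)\le 1$ is so restrictive that the earlier results determine $\fg$ up to its centre, after which the root count can simply be read off.

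First I would isolate the case $\msrke(\fg)=0$. By Chwe's theorem, recalled in the introduction, this forces $\fg$ to be a torus and hence abelian; then $T=\fg$, the decomposition is concentrated in $\fg_0$, and $R=\emptyset$. So we may assume $\msrke(\fg)=1$. If $|R|\le 1$ there is nothing to prove, so suppose $|R|\ge 2$, i.e.\ $\fg$ has at least two roots. Now the result of Section~\ref{S:ER} applies and furnishes a self-centralizing torus in $\fg$; Theorem~A then becomes available and gives
\[ \fg/C(\fg)\cong\fsl(2),\quad \fb_{\fsl(2)},\quad\text{or}\quad \fb_{\fsl(2)}^{-1}. \]

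It remains to transport the count through the central quotient and evaluate it on the three model algebras. Since each central element is killed by $\ad T$, we have $C(\fg)\subseteq\fg_0$; hence every weight $\alpha$ vanishes on $T\cap C(\fg)$ and factors through $\bar T:=\pi(T)$, where $\pi\colon\fg\tha\fg/C(\fg)$ is the projection, and $\pi$ restricts to an isomorphism $\fg_\alpha\xrightarrow{\sim}(\fg/C(\fg))_\alpha$ for each $\alpha\ne 0$. Distinct roots are determined by their restrictions to $\bar T$, since each root vanishes on the central part $T\cap C(\fg)$; hence $R$ is in bijection with the roots of $\bar T$ on $\fg/C(\fg)$. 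It therefore suffices to verify that each of the three algebras above has at most two roots relative to \emph{any} torus: for $\fsl(2)$ a nonzero torus is one-dimensional and produces the two roots $\pm\alpha$; the solvable Borel $\fb_{\fsl(2)}$ has the single root of $e$; and $\fb_{\fsl(2)}^{-1}$ contributes at most two nonzero weights. In every case $|R|\le 2$, contradicting $|R|\ge 3$ and completing the argument.

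The step I expect to cost the most effort is the last computation for $\fb_{\fsl(2)}^{-1}$: I would write down the bracket and $p$-operation of this non-split abelian extension explicitly, check that its maximal tori are one-dimensional (non-splitness should preclude the central, hence toral, splitting that a two-dimensional torus would require), and confirm that the two non-toral basis vectors yield at most two distinct nonzero weights. The remaining ingredients---Chwe's theorem, the production of a self-centralizing torus in Section~\ref{S:ER}, and the inclusion $C(\fg)\subseteq\fg_0$---are either quoted or immediate, so the analysis of $\fb_{\fsl(2)}^{-1}$ is the only genuinely computational point.
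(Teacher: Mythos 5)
Your proposal is correct and follows essentially the same route as the paper's own proof (Corollary \ref{CR6}): dispose of $\msrke(\fg)=0$ via Chwe's theorem, use Proposition \ref{LR1} (Section \ref{S:ER}) to get a self-centralizing torus once there are at least two roots, and then invoke Theorem A to reduce to reading off the root count of $\fsl(2)$, $\fb_{\fsl(2)}$ and $\fb_{\fsl(2)}^{-1}$ modulo the centre. The only difference is that you spell out the final bookkeeping (the bijection between roots of $T$ and of $\bar T$, and the explicit count $\le 2$ for the three model algebras) which the paper leaves implicit.
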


\bigskip
\noindent
For $p\ge 3$, the analogous problem of determining those finite groups $G$, whose $p$-elementary abelian subgroups all have $p$-rank $\le 1$ leads to the consideration of groups, whose 
Sylow-$p$-subgroups are cyclic: Quillen's Dimension Theorem readily implies that the complexity $\cx_G(k)$ of the trivial $G$-module is bounded by $1$, cf.\ \cite[Theorem]{AE81}. In view of 
\cite[(XII.11.6)]{CE56}, this is equivalent to the Sylow-$p$-subgroups being cyclic. The interested reader may consult Brauer's papers \cite{Br76,Br79} for further information concerning the 
structure of such finite groups. 

In view of Higman's classical result \cite{Hi54}, the determination of finite groups of $p$-rank $\le 1$ is equivalent to finding those finite groups, whose group algebras have finite 
representation type. As we show in Section \ref{S:FG}, there is a similar connection for finite group schemes, once one also allows group schemes of domestic representation type. This observation
rests on the following result, whose proof employs Theorem A in order to give the following characterization of finite group schemes $\cG$ of $p$-rank $1$ and with trivial largest linearly reductive normal subgroup $\cG_{\rm lr}$:

\bigskip

\begin{thm***} Suppose that $p\ge 3$ and let $\cG$ be a finite group scheme such that $\cG_{\rm lr}=e_k$. If $\msrke(\cG)=1$, then one of the following alternatives occurs:
\begin{enumerate}
\item[(a)] $\cG = \cG_{\rm red}$, and the finite group $\cG(k)$ has $p$-rank $1$ and $O_{p'}(\cG(k))=\{1\}$.
\item[(b)] There is a binary polyhedral group scheme $\tilde{\cG} \subseteq \SL(2)$ such that $\cG \cong \PP(\SL(2)_1\tilde{\cG})$.
\item[(c)] $\cG = \cU\!\rtimes\!\cG_{\rm red}$, where $\cU$ is V-uniserial of height $\height(\cU)\ge 2$ and $\cG(k)$ is cyclic and such that
$p\!\nmid\!{\rm ord}(\cG(k))$.
\item[(d)] $\cG = ((\cW_n)_1\!\rtimes\!\GG_{m(r)})\!\rtimes\!\cG_{\rm red}$, where $\cG(k)$ is abelian and $p\!\nmid\!{\rm ord}(\cG(k))$. \end{enumerate} \end{thm***}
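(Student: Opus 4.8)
The plan is to reduce the classification of finite group schemes $\cG$ with $\cG_{\rm lr}=e_k$ and $\msrke(\cG)=1$ to the infinitesimal and reduced building blocks by analyzing the standard decomposition of $\cG$ relative to its connected component $\cG^\circ$ and its height filtration. Since $\msrke(\cG)=1$ is inherited by every subgroup scheme, the first step is to observe that both $\cG^\circ$ and the reduced part $\cG_{\rm red}=\cG(k)$ have $p$-rank $\le 1$, and that the hypothesis $\cG_{\rm lr}=e_k$ forces the linearly reductive (i.e.\ diagonalizable$\times$ tame) pieces to interact with the unipotent and Lie-theoretic pieces in a very rigid way. **First I would** treat the purely reduced case, where $\cG=\cG_{\rm red}$: here $\msrke(\cG)=1$ is exactly the statement that the elementary abelian $p$-subgroups of the finite group $\cG(k)$ have rank $\le 1$, and the triviality of $\cG_{\rm lr}$ translates into $O_{p'}(\cG(k))=\{1\}$, yielding alternative (a) directly from the classical group-theoretic dictionary invoked in the introduction.

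**Next I would** handle the infinitesimal part by passing to the first Frobenius kernel $\cG_1$, which corresponds to a restricted Lie algebra $\fg=\Lie(\cG)$, and here \emph{Theorem A} does the decisive work: the condition $\msrke(\fg)=1$ together with the self-centralizing torus hypothesis pins down $\fg/C(\fg)$ to be one of $\fsl(2)$, $\fb_{\fsl(2)}$, or $\fb_{\fsl(2)}^{-1}$. The case $\fg/C(\fg)\cong\fsl(2)$, after accounting for the diagonalizable torus action that the trivial-$\cG_{\rm lr}$ hypothesis must absorb into a central $\GG_{m}$-extension, gives rise to the binary polyhedral picture: one recognizes $\SL(2)_1$ as the height-$1$ infinitesimal part and the reduced complement as a finite subgroup scheme $\tilde{\cG}\subseteq\SL(2)$, producing the quotient $\PP(\SL(2)_1\tilde{\cG})$ of alternative (b). The solvable cases $\fb_{\fsl(2)}$ and $\fb_{\fsl(2)}^{-1}$ correspond respectively to the V-uniserial situation (c) and to the Witt-algebra/multiplicative-torus situation (d); in each case I would reconstruct the full group scheme as a semidirect product of its infinitesimal nilpotent core with $\cG_{\rm red}$, using that the $p'$-part of $\cG(k)$ acts by the torus normalization and that $p\nmid{\rm ord}(\cG(k))$ is forced by $\msrke(\cG)=1$ ruling out extra unipotent rank.

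**The technically delicate step** will be the assembly: showing that $\cG$ really splits as the indicated semidirect or quotient product and that no further extension classes survive. The obstacle is that $\msrke(\cG)=1$ constrains only the abelian unipotent subgroup schemes, so I must argue that any admissible extension of $\cG_{\rm red}$ by the infinitesimal core either splits or is rigidified by the $\GG_{m(r)}$-action; the height bound $\height(\cU)\ge2$ in (c) and the precise appearance of $(\cW_n)_1\rtimes\GG_{m(r)}$ in (d) have to be extracted from the restricted-Lie-algebra structure of $\fb_{\fsl(2)}^{-1}$ and its normalizer, which is where most of the case-by-case bookkeeping lives. Finally I would verify that each of the four families indeed satisfies $\cG_{\rm lr}=e_k$ and $\msrke(\cG)=1$, so that the list is both exhaustive and non-redundant.
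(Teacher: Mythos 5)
Your overall architecture (split into $\cG^0$ and $\cG_{\rm red}$, feed the height-one kernel into the Lie-algebra classification, then reassemble) matches the paper's, but there are several concrete gaps. First, the paper's opening and indispensable step is the \emph{identity} $\msrke(\cG^0)+\msrke(\cG_{\rm red})=1$, proved by showing that if both ranks were $1$ one could always manufacture a rank-$2$ elementary abelian subgroup (either from $\GG_{a(1)}\times\ZZ/(p)$ inside $\cU\rtimes\cG_{\rm red}$, or from a fixed point of a $p$-elementary abelian subgroup of $\PSL(2)(k)$ acting on $V(\fsl(2))$). You only record the weaker statement that both ranks are $\le 1$ and gesture at "$p\nmid{\rm ord}(\cG(k))$ being forced"; without the dichotomy you cannot legitimately assume in cases (b)--(d) that $\cG_{\rm red}$ is linearly reductive and acts faithfully on $\cG^0$, which is what drives the rest of the proof. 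Second, your case correspondence is wrong: (c) and (d) do \emph{not} arise from $\fb_{\fsl(2)}$ versus $\fb_{\fsl(2)}^{-1}$. Both come from the single alternative $\dim V(\Lie(\cG^0))=1$, which via the theory of one-parameter subgroups and \cite[(2.7)]{FV00} yields $\cG^0\cong\cU\rtimes\GG_{m(r)}$ with $\cU$ V-uniserial; (c) versus (d) is then decided by whether $\height(\cU)\ge 2$ (forcing $\GG_{m(r)}$ to act trivially, so $\cG^0=\cU$ and $\cG(k)$ embeds in $k^\times$, hence is cyclic) or $\height(\cU)=1$ (forcing $\cU\cong(\cW_n)_1$). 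Note also that $\fb_{\fsl(2)}^{-1}$ itself has $p$-rank $2$, so it cannot occur as $\Lie(\cG^0)$ here; the relevant Lie-algebra input is Corollary \ref{CR11} (the $T(\fg)=(0)$ case, valid for $p\ge 3$), not Theorem A, which requires $p\ge 5$ and a self-centralizing torus and would leave the $p=3$ case of Theorem B unproved.

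Two further steps in your sketch are not actually arguments. The passage to alternative (b) in the paper is not a splitting/extension analysis: once $\cG^0\cong\SL(2)_1T_r$ one invokes the representation-type machinery ($k\cG^0$ domestic by \cite[(5.6)]{FV03}, hence $k\cG$ domestic by \cite[(4.4)]{Fa15}, hence $\cG\cong\PP(\SL(2)_1\tilde\cG)$ by the classification \cite[(4.7)]{Fa15}); a direct structural identification of the extension would require reproving that classification. Finally, the assertions that $\cG(k)$ is abelian in (d) and cyclic in (c) are obtained by a specific computation: $k\cU\cong k[X]/(X^{p^n})$ is uniserial, so the $\cG(k)$-action on it is through a single character $\zeta$, and combining this with the action on $X(\GG_{m(r)})\cong\ZZ/(p^r)$ shows the commutator subgroup acts trivially on $\cG^0$, whence faithfulness forces commutativity. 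Your appeal to "torus normalization" does not substitute for this.
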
 

\bigskip
\noindent
Finite group schemes $\cG$ of $p$-rank $0$ were determined by Nagata, who showed that the infinitesimal and reduced constituents of such groups are diagonalizable
and of order prime to $p$, respectively.

\emph{Throughout this paper, all vector spaces are assumed to be finite-dimensional over a fixed algebraically closed field $k$ of characteristic $p\!\ge\!3$}. The reader
is referred to \cite{SF} for basic facts concerning restricted Lie algebras and their representations. 

\bigskip
\noindent
\textbf{Acknowledgments.} This paper was mainly written during a two-month visit of the second author to the Collaborative Research Center 701 of the University of 
Bielefeld. He would like to take this opportunity and thank Henning Krause and his research team for their hospitality. 

The authors are indebted to Claus Michael Ringel for sharing his manuscript \cite{Ri16} with them. 

\bigskip

\section{Preliminaries}
\subsection{Root space decompositions}
The proof of the following fundamental result, which is based on the Projective Dimension Theorem \cite[(I.7.2)]{Ha}, is analogous to the arguments employed by
C.M.\ Ringel in his description of the elementary modules of the $3$-Kronecker quiver, cf.\ \cite[(3.2)]{Ri16}.

\bigskip

\begin{Lem} \label{RSD1} Let $\fg$ be a Lie algebra $U,V \subseteq \fg$ be subspaces such that 
\begin{enumerate}
\item[(a)] $\dim_kU=2$, and
\item[(b)] $\dim_k[U,V] \le \dim_kV$. \end{enumerate}
Then there exist $u\in U\!\smallsetminus\!\{0\}$ and $v \in V\!\smallsetminus\!\{0\}$ such that $[u,v]=0$. \end{Lem}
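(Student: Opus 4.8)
The plan is to reinterpret the statement as the nonemptiness of an intersection inside a projective space and to verify it by a single application of the Projective Dimension Theorem. We may clearly assume $V\ne(0)$, since otherwise there is no admissible $v$. Put $d:=\dim_kV\ge 1$ and $W:=[U,V]\subseteq\fg$, so that hypothesis (b) reads $m:=\dim_kW\le d$. Fixing a basis $u_1,u_2$ of $U$ (this is where property (a) enters) together with bases of $V$ and of $W$, the bracket becomes a bilinear map $U\times V\lra W$ whose components are $m$ bilinear forms $f_1,\dots,f_m$ on $U\times V$. A bracket $[u,v]$ vanishes precisely when $f_1(u,v)=\cdots=f_m(u,v)=0$, and each $f_\ell$, being bihomogeneous of bidegree $(1,1)$, defines a global section of $\cO(1,1)$ on $\PP(U)\times\PP(V)\cong\PP^1\times\PP^{d-1}$. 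Thus the assertion is equivalent to the existence of a common zero $([u],[v])$ of $f_1,\dots,f_m$ on this product.

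First I would embed $\PP(U)\times\PP(V)$ into $\PP^{2d-1}$ by the Segre morphism $\sigma$; its image $Y$ is an irreducible projective variety of dimension $\dim Y=1+(d-1)=d$. Since $\sigma^\ast\cO_{\PP^{2d-1}}(1)=\cO(1,1)$, each $f_\ell$ is the restriction to $Y$ of a linear form $\ell_\ell$ on $\PP^{2d-1}$, so the common zero locus of $f_1,\dots,f_m$ on $Y$ is exactly $Y\cap Z$, where $Z:=V(\ell_1,\dots,\ell_m)\subseteq\PP^{2d-1}$ is a linear subspace of dimension $\dim Z\ge(2d-1)-m$. Since
\[
\dim Y+\dim Z-(2d-1)\ \ge\ d+\big((2d-1)-m\big)-(2d-1)\ =\ d-m\ \ge\ 0,
\]
the Projective Dimension Theorem \cite[(I.7.2)]{Ha} guarantees that $Y\cap Z\ne\emptyset$. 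A point of $Y\cap Z$ corresponds to a pair $([u],[v])\in\PP(U)\times\PP(V)$ with $[u,v]=0$, i.e.\ to nonzero $u\in U$ and $v\in V$ with $[u,v]=0$, as desired.

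The decisive point is the numerical inequality $d-m\ge 0$, which is precisely the content of hypothesis (b) once the ambient and Segre dimensions are computed; here the assumption $\dim_kU=2$ is essential, as it pins $\dim Y$ down to $d$ and keeps the estimate exactly at the threshold. I expect no genuine obstacle beyond this bookkeeping: the only potentially delicate point is that some $f_\ell$ might vanish identically, but this merely enlarges $Z$ and hence only improves the bound, so the degenerate cases are harmless. For comparison, one can dispense with the geometry altogether: if $m<d$ the claim is immediate, since every map $(\ad u)|_V$ then has rank at most $m<d$ and thus a nonzero kernel, while for $m=d$ the polynomial $\det\big((s\,\ad u_1+t\,\ad u_2)|_V\big)$ is homogeneous of degree $d\ge 1$ in $(s,t)$ and therefore admits a nontrivial zero over the algebraically closed field $k$. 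This elementary observation is the concrete shadow of the intersection-theoretic argument above.
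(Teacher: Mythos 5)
Your main argument is correct and is essentially the paper's own proof in different clothing: the paper intersects $\PP(\ker\mu)$ (for $\mu:U\otimes_kV\to[U,V]$) with the cone of rank-one tensors $\PP(\im\zeta)$ inside $\PP^{2\dim_kV-1}$, which is exactly your intersection of the Segre variety $Y$ with the linear subspace $Z$ cut out by the components of the bracket, with the same dimension count and the same appeal to the Projective Dimension Theorem \cite[(I.7.2)]{Ha}. Your closing remark --- that for $m=\dim_k[U,V]=\dim_kV$ one can instead take a nontrivial zero of the degree-$d$ form $\det\bigl((s\,\ad u_1+t\,\ad u_2)|_V\bigr)$ over the algebraically closed field --- is a valid and more elementary alternative, but it is an aside rather than your main route.
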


\bigskip
\noindent
Let $\fg$ be a Lie algebra with root space decomposition
\[ \fg = \fh\!\oplus\!\bigoplus_{\alpha \in R} \fg_\alpha\]
relative to some Cartan subalgebra $\fh \subseteq \fg$. 

\bigskip

\begin{Cor} \label{RSD2} Let $\fg_\alpha$ be a root space of maximal dimension. If $\beta \in R\!\smallsetminus\!\{-\alpha\}$ is such that $\dim_k\fg_\beta \ge 2$,
then there are $x \in \fg_{\alpha}\!\smallsetminus\!\{0\}$ and $y \in \fg_\beta\!\smallsetminus\!\{0\}$ such that $[x,y]=0$. \end{Cor}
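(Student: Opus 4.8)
The plan is to deduce the Corollary directly from Lemma \ref{RSD1}, the only subtlety being the correct \emph{orientation} of the two subspaces: one must place the two-dimensional space inside $\fg_\beta$ and take $\fg_\alpha$ as the target, so that the maximality of $\dim_k\fg_\alpha$ supplies exactly the dimension bound (b).

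First I would use the hypothesis $\dim_k\fg_\beta\ge 2$ to choose a two-dimensional subspace $U\subseteq\fg_\beta$, and set $V:=\fg_\alpha$. Condition (a) of Lemma \ref{RSD1} then holds by construction. To verify condition (b), I would invoke the standard bracket relation for the root space decomposition, namely $[U,V]\subseteq[\fg_\beta,\fg_\alpha]\subseteq\fg_{\alpha+\beta}$, where $\fg_{\alpha+\beta}:=(0)$ whenever $\alpha+\beta\notin R$. Since $\beta\ne-\alpha$, the functional $\alpha+\beta$ is non-zero, so $\fg_{\alpha+\beta}$ is either a genuine root space or trivial; in both cases the maximality of $\dim_k\fg_\alpha$ forces $\dim_k\fg_{\alpha+\beta}\le\dim_k\fg_\alpha$. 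Consequently
\[ \dim_k[U,V]\le\dim_k\fg_{\alpha+\beta}\le\dim_k\fg_\alpha=\dim_kV, \]
which is precisely hypothesis (b).

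Finally, Lemma \ref{RSD1} produces $u\in U\smallsetminus\{0\}\subseteq\fg_\beta\smallsetminus\{0\}$ and $v\in V\smallsetminus\{0\}=\fg_\alpha\smallsetminus\{0\}$ with $[u,v]=0$. Setting $x:=v$ and $y:=u$ yields the desired pair, since $[x,y]=-[u,v]=0$ by antisymmetry of the bracket.

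I do not expect a genuine obstacle here beyond fixing the orientation. The one point worth flagging is that the symmetric choice $U\subseteq\fg_\alpha$, $V=\fg_\beta$ would fail: the same computation would only bound $\dim_k[U,V]$ by $\dim_k\fg_\alpha$, which need not be at most $\dim_k\fg_\beta=\dim_kV$ when $\fg_\alpha$ is strictly larger than $\fg_\beta$. Placing the two-dimensional space inside $\fg_\beta$ is exactly what turns the maximality of $\dim_k\fg_\alpha$ into the inequality (b), and this is the whole content of the argument.
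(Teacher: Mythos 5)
Your proof is correct and is essentially identical to the paper's: both place the two-dimensional subspace $U$ inside $\fg_\beta$, take $V=\fg_\alpha$, bound $\dim_k[U,\fg_\alpha]$ by $\dim_k\fg_{\alpha+\beta}\le\dim_k\fg_\alpha$ using maximality, and invoke Lemma \ref{RSD1}. Your additional remarks on why $\beta\ne-\alpha$ is needed and why the opposite orientation fails are accurate but not required.
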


\begin{proof} By assumption, there is a subspace $U\subseteq \fg_{\beta}$ of dimension $2$. By choice of $\alpha$ and $\beta$, we have 
\[ \dim_k [U,\fg_\alpha]\le \dim_k \fg_{\alpha+\beta} \le \dim_k\fg_\alpha,\] 
so that our assertion follows from Lemma \ref{RSD1}. \end{proof} 

\bigskip
\noindent
Let $(\fg,[p])$ be a restricted Lie algebra. We set 
\[ \mu(\fg):= \max \{ \dim_k\ft \ ; \ \ft \subseteq \fg \ \text{torus}\}\]
and consider 
\[ \mstor(\fg) := \{ \ft \subseteq \fg \ ; \ \ft \ \text{torus}, \ \dim_k \ft = \mu(\fg)\}.\]
Let $\ft \in \mstor(\fg)$ be a torus of dimension $\mu(\fg)$. By general theory, the centralizer $C_\fg(\ft)$ of $\ft$ in $\fg$ is a Cartan subalgebra, and
there results the root space decomposition
\[ \fg = C_\fg(\ft) \oplus \bigoplus_{\alpha \in R_\ft} \fg_\alpha\]
$\fg$ relative to $\ft$. The set $R_\ft \subseteq \ft^\ast\!\smallsetminus\!\{0\}$ is called the set of roots of $\fg$. 

We denote by
\[ \rho(\fg,\ft) := \max_{\alpha \in R_\ft} \dim_k\fg_\alpha \]
the maximal dimension of the root spaces and let
\[ r(\fg,\ft):=|R_\ft|\]
be the number of roots. It turns out that these data do not depend on the choice of $\ft$.

\bigskip

\begin{Lem} \label{RSD3} Let $(\fg,[p])$ be a restricted Lie algebra. Then there exist $\rho(\fg), r(\fg) \in \NN$ such that
\[ \rho(\fg,\ft) = \rho(\fg) \ \text{and} \ r(\fg,\ft)=r(\fg)\]
for all $\ft \in \mstor(\fg)$. \end{Lem}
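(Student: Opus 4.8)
The plan is to establish the invariance of $\rho(\fg,\ft)$ and $r(\fg,\ft)$ by showing that any two maximal tori in $\mstor(\fg)$ are conjugate under a suitable automorphism of $\fg$ that preserves the relevant data. The standard approach in the restricted Lie algebra setting is to work with the automorphism group generated by exponentials of $\ad$-nilpotent elements, often called elementary or special automorphisms. First I would recall that for a restricted Lie algebra over an algebraically closed field, the maximal tori are all conjugate under the group $E(\fg)$ of automorphisms generated by $\exp(\ad x)$ for $x \in \fg$ nilpotent (this is a restricted analog of the classical conjugacy of Cartan subalgebras, valid since we may assume $p$ large relative to nilpotency indices, or invoke the general theory as developed in the literature). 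Let $\ft, \ft' \in \mstor(\fg)$; by conjugacy there is $\varphi \in \Aut(\fg)$ with $\varphi(\ft) = \ft'$.

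The key observation is that such an automorphism $\varphi$ induces an isomorphism of the root space decompositions. Concretely, the dual map $\varphi^\ast \colon (\ft')^\ast \to \ft^\ast$ carries the root system $R_{\ft'}$ bijectively onto $R_\ft$, and for each $\alpha \in R_{\ft'}$ the automorphism $\varphi$ restricts to a linear isomorphism $\fg_{\varphi^\ast \alpha} \to \fg'_\alpha$ between the corresponding root spaces. This immediately yields $\dim_k \fg_{\varphi^\ast\alpha} = \dim_k \fg'_\alpha$ for all roots, as well as $|R_\ft| = |R_{\ft'}|$, so that $\rho(\fg,\ft) = \rho(\fg,\ft')$ and $r(\fg,\ft) = r(\fg,\ft')$. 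Since this holds for an arbitrary pair of maximal tori, the common values define the desired invariants $\rho(\fg)$ and $r(\fg)$.

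The main obstacle I anticipate is justifying the conjugacy of maximal tori in the restricted setting with the required uniformity. Over an algebraically closed field the conjugacy of Cartan subalgebras is classical, but here one needs tori of maximal \emph{dimension} to be conjugate by an automorphism that genuinely permutes root spaces, and the $p$-structure must be respected. One clean route is to pass to the automorphism group scheme $\Aut(\fg)$, whose connected component acts on the variety $\mstor(\fg)$, and to argue that $\mstor(\fg)$ forms a single orbit; alternatively one exploits that $C_\fg(\ft)$ is a Cartan subalgebra (as already noted before the statement) and that its dimension $\dim_k C_\fg(\ft) = \dim_k\fg - \sum_{\alpha} \dim_k\fg_\alpha$ is itself an invariant, then bootstraps. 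I would present the conjugacy as the technical heart, citing the relevant structure theory (e.g.\ \cite{SF}) for the existence of $\varphi$, and keep the deduction of equal dimensions and root counts as the straightforward consequence.
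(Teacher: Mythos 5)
There is a genuine gap, and it sits exactly where you located it: the conjugacy of maximal tori. In characteristic $p$ the classical conjugacy theorem for Cartan subalgebras fails, and it fails already for tori of maximal dimension: by Demushkin's theorem the Witt algebra $W(1)$ (which is restricted, with $\mu(W(1))=1$) has two conjugacy classes of self-centralizing one-dimensional tori under $\Aut(W(1))$, represented by $ke_0$ and by the torus spanned by the toral element corresponding to $(1+x)\partial$. So no automorphism $\varphi$ with $\varphi(\ft)=\ft'$ exists in general, whether one works with the full automorphism group, its group scheme, or the subgroup generated by exponentials $\exp(\ad x)$ (which in characteristic $p$ is only defined for $x$ with small nilpotency index anyway). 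The alternative you float --- that $\Aut(\fg)^\circ$ acts transitively on $\mstor(\fg)$ --- fails for the same reason. Your deduction that a conjugating automorphism would carry $R_{\ft'}$ bijectively onto $R_\ft$ and match root space dimensions is correct, but the hypothesis it rests on is simply not available, and no amount of citing \cite{SF} will produce it.

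The paper's proof is a one-line appeal to \cite[(4.3)]{Fa04}, and the point of that result is precisely to get the invariance of $\rho(\fg,\ft)$ and $r(\fg,\ft)$ \emph{without} conjugacy: there one shows that the variety of tori of maximal dimension is irreducible and that the root space data are constant along it by a genericity/degeneration argument. (The $W(1)$ example is consistent with this: both non-conjugate tori have $p-1$ roots, each with one-dimensional root space, so the invariants agree even though the tori do not lie in one orbit.) If you want a self-contained argument, you would need to reproduce that geometric reasoning rather than the conjugacy route.
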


\begin{proof} Let $\ft,\ft' \in \mstor(\fg)$, then \cite[(4.3)]{Fa04} implies that 
\[ \rho(\fg,\ft) = \rho(\fg,\ft') \ \text{as well as} \ r(\fg,\ft) = r(\fg,\ft'),\]
as desired. \end{proof}  

\bigskip

\subsection{Generically toral Lie algebras}
Let $(\fg,[p])$ be a restricted Lie algebra. We say that $(\fg,[p])$ is {\it generically toral}, provided there is a self-centralizing torus $\ft \subseteq \fg$. 
In view of \cite[(3.8), (7.6)]{Fa04}, every torus $\ft \in \mstor(\fg)$ of a generically toral Lie algebra is self-centralizing and the set $\bigcup_{\ft \in \mstor(\fg)}\ft$
lies dense in $\fg$.

If $\fg$ is generically toral, then its center $C(\fg)$ coincides with $\bigcap_{\ft \in \mstor(\fg)}\ft$. In particular, $C(\fg)$ is a torus. 

\bigskip 

\begin{Lem} \label{Gt1} Let $(\fg,[p])$ be generically toral, $\fc \subseteq C(\fg)$ be a subtorus. Then $\fg/\fc$ is generically toral, and the canonical projection
$\pi : \fg \lra \fg/\fc$ induces an isomorphism $C(\fg)/\fc \stackrel{\sim}{\lra} C(\fg/\fc)$. \end{Lem}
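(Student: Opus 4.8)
The plan is to fix a self-centralizing torus $\ft \in \mstor(\fg)$, which exists because $\fg$ is generically toral, and to transport everything through the root space decomposition $\fg = \ft \oplus \bigoplus_{\alpha \in R} \fg_\alpha$ attached to $\ft$, where we use that $C_\fg(\ft) = \ft$. Since the excerpt records $C(\fg) = \bigcap_{\ft' \in \mstor(\fg)} \ft'$, the central subtorus $\fc$ satisfies $\fc \subseteq C(\fg) \subseteq \ft$. As $\fc$ is central it is an ideal, and being a torus it is $[p]$-closed; hence $\fc$ is a $p$-ideal, $\fg/\fc$ is a restricted Lie algebra, and $\ft/\fc \subseteq \fg/\fc$ is meaningful.

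First I would check that $\ft/\fc$ is again a torus. Because $\ft$ is abelian, its $p$-map is additive and $p$-semilinear. Given $x \in \ft$ with $\bar{x}^{[p]} = 0$, i.e.\ $x^{[p]} \in \fc$, bijectivity of the $p$-map on $\fc$ furnishes $c \in \fc$ with $c^{[p]} = x^{[p]}$, so $(x-c)^{[p]} = 0$; injectivity of the $p$-map on $\ft$ then gives $x = c \in \fc$. Thus the induced $p$-map on $\ft/\fc$ is injective, hence bijective, and $\ft/\fc$ is a torus.

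Next, to see that $\fg/\fc$ is generically toral, I would compute $C_{\fg/\fc}(\ft/\fc)$. Writing $y = y_0 + \sum_\alpha y_\alpha$ with $y_0 \in \ft$ and $y_\alpha \in \fg_\alpha$, the condition $[y,t] \in \fc$ for all $t \in \ft$ reads $\sum_\alpha \alpha(t)\, y_\alpha \in \fc \subseteq \ft$; comparing root space components and invoking $\alpha \ne 0$ forces $y_\alpha = 0$ for every $\alpha$. Hence $C_{\fg/\fc}(\ft/\fc) = \ft/\fc$, so $\ft/\fc$ is self-centralizing and $\fg/\fc$ is generically toral.

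For the isomorphism, I would note that $\pi(C(\fg)) = C(\fg)/\fc$, that images of central elements are central, and that $\pi|_{C(\fg)}$ has kernel $C(\fg) \cap \fc = \fc$, so $\pi$ induces an injective homomorphism $C(\fg)/\fc \lra C(\fg/\fc)$. The substance is surjectivity: given $\bar{y} \in C(\fg/\fc)$, that is $[y,\fg] \subseteq \fc$, the computation of the previous paragraph first yields $y \in \ft$, and then $[y,\fg_\alpha] = \alpha(y)\fg_\alpha \subseteq \fc \subseteq \ft$ together with $\fg_\alpha \cap \ft = (0)$ forces $\alpha(y) = 0$ for every root $\alpha$. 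Consequently $\ad y = 0$, so $y \in C(\fg)$ and $\bar{y} \in C(\fg)/\fc$. I expect this surjectivity step to be the only delicate point: it is exactly here that the self-centralizing property $C_\fg(\ft) = \ft$ is indispensable, since without it the vanishing of all $\alpha(y)$ would fail to place $y$ in the center.
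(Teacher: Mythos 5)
Your proof is correct and follows essentially the same route as the paper: fix a self-centralizing maximal torus, pass to its root space decomposition, and deduce both the self-centralizing property of $\ft/\fc$ and the surjectivity of $C(\fg)/\fc \lra C(\fg/\fc)$ by forcing all root-space components, and then all values $\alpha(z)$, to vanish. The only cosmetic difference is that where you place a centralizing element inside $\ft$ by direct component comparison, the paper invokes self-normalization of the Cartan subalgebra $\ft$ together with the computation $[s^{[p]},z]\in\ad(s)^{p-1}(\fc)=(0)$; both arguments are valid.
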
 

\begin{proof} Let $\ft \subseteq \fg$ be a torus of dimension $\mu(\fg)$. Then $\ft' := \pi(\ft)$ is a torus of $\fg':= \fg/\fc$.

Let $\pi(x)$ be an element of the centralizer of $\ft'$ in $\fg'$. Then we have $[x,t] \in \fc \subseteq \ft$ for every $t \in \ft$. Since $\ft$ is a Cartan subalgebra of
$\fg$, it follows that $x \in \ft$, so that $\pi(x) \in \ft'$. As a result, $\ft'$ is a self-centralizing torus of $\fg'$, so that $\fg'$ is generically toral.

Let 
\[ \fg := \ft\!\oplus\!\bigoplus_{\alpha \in R_\ft}\fg_\alpha\]
be the root space decomposition of $\fg$ relative to $\ft$, so that $C(\fg)=\bigcap_{\alpha \in R_\ft}\ker\alpha$. 

The surjection $\pi$ clearly induces an injection $C(\fg)/\fc \hookrightarrow C(\fg')$. Let $\pi(z) \in C(\fg')$ be a central element. Given $x \in \fg$, we have
$[z,x] \in \fc$. Let $t \in \ft$. As $\ft$ is a torus, there is $s \in \ft$ such that $t=s^{[p]}$, whence
\[ [t,z] = [s^{[p]},z] \in \ad(s)^{p-1}(\fc)=(0).\]
Hence $z \in \ft$ and 
\[\alpha(z)x = [z,x] \in \fg_\alpha\cap\fc = (0)\]
for all $x \in \fg_\alpha$ and $\alpha \in R_\ft$. Thus, $z \in \bigcap_{\alpha \in R_\ft} \ker\alpha = C(\fg)$. As a result, the above injection is also surjective.
\end{proof}

\bigskip

\begin{Lem} \label{Gt2} Suppose that $\fg$ is generically toral. If $\ft \in \mstor(\fg)$, then 
\[\fg_\alpha^{[p]} \subseteq \ker\alpha\] 
for every $\alpha \in R_\ft$. \end{Lem}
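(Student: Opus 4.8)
The plan is to fix a root $\alpha \in R_\ft$ and an arbitrary element $x \in \fg_\alpha$, and to prove directly that $\alpha(x^{[p]})=0$. The argument naturally falls into two steps: first I would locate $x^{[p]}$ inside the torus $\ft$, and then I would evaluate $\alpha$ on it by computing a single bracket in two different ways.

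For the first step I would show that $x^{[p]} \in \ft$. Since $\fg$ is generically toral and $\ft \in \mstor(\fg)$, the torus $\ft$ is self-centralizing, so $C_\fg(\ft)=\ft$; it therefore suffices to verify that $x^{[p]}$ commutes with every $t \in \ft$. Here I would invoke the defining identity $\ad(x^{[p]}) = (\ad x)^p$ of the restricted structure and compute $(\ad x)^p(t)$. Because $(\ad x)(t)=[x,t]=-\alpha(t)x \in \fg_\alpha$ and hence $(\ad x)^2(t) = -\alpha(t)[x,x]=0$, the operator $(\ad x)^p$ annihilates $\ft$ as soon as $p\ge 2$, which holds by our standing hypothesis $p\ge 3$. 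Thus $[x^{[p]},t]=0$ for all $t\in\ft$, giving $x^{[p]} \in C_\fg(\ft)=\ft$.

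With $x^{[p]}\in\ft$ available, I would evaluate the bracket $[x^{[p]},x]$ twice. On the one hand, since $x^{[p]}\in\ft$ and $x\in\fg_\alpha$, the root space decomposition yields $[x^{[p]},x]=\alpha(x^{[p]})\,x$. On the other hand, $[x^{[p]},x]=(\ad x)^p(x)=0$, because $(\ad x)(x)=[x,x]=0$. Comparing the two expressions gives $\alpha(x^{[p]})\,x=0$; for $x\ne 0$ this forces $\alpha(x^{[p]})=0$, while for $x=0$ we have $x^{[p]}=0\in\ker\alpha$ trivially. Hence $\fg_\alpha^{[p]}\subseteq\ker\alpha$ for every $\alpha\in R_\ft$.

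The argument is short, and I do not anticipate a genuine obstacle beyond correctly marshalling the restricted-structure identity $\ad(x^{[p]})=(\ad x)^p$ together with the nilpotency of $\ad x$ on $\ft$. The one place where a hypothesis is essential is the passage $C_\fg(\ft)=\ft$: this is exactly where generic torality (via the self-centralizing property of tori in $\mstor(\fg)$) enters. Without it, one could only conclude $x^{[p]}\in C_\fg(\ft)$, and the clean two-sided computation of $[x^{[p]},x]$ via $\alpha$ would no longer be available.
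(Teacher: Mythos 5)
Your proof is correct and follows essentially the same route as the paper: locate $x^{[p]}$ in $\ft$ via the identity $\ad(x^{[p]})=(\ad x)^p$ together with the self-centralizing property of $\ft$, then read off $\alpha(x^{[p]})=0$ from $0=[x^{[p]},x]=\alpha(x^{[p]})x$. The only (harmless) variation is that you observe $(\ad x)^2$ already annihilates $\ft$ and conclude $x^{[p]}\in C_\fg(\ft)=\ft$ directly, whereas the paper only notes $(\ad x)^p(\ft)\subseteq\fg_{p\alpha}=\ft$ and passes through the self-normalizing property of the Cartan subalgebra $\ft$.
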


\begin{proof} Note that $C(\fg) \subseteq C_\fg(\ft)=\ft$. Given $x \in \fg_\alpha\!\smallsetminus\!\{0\}$, we have
\[ (\ad x^{[p]})(\ft) = (\ad x)^p(\ft) \subseteq \fg_{p\alpha} = \ft,\] 
whence $x^{[p]} \in {\rm Nor}_\fg(\ft)$, the normalizer of $\ft$ in $\fg$. Since the Cartan subalgebra $\ft$ is self-normalizing, it follows that
$x^{[p]} \in \ft$. Moreover,
\[ 0 = [x^{[p]},x] = \alpha(x^{[p]})x,\]
so that $x^{[p]} \in \ker \alpha$. \end{proof}

\bigskip

\subsection{Centralizers and $2$-saturation}
A restricted Lie algebra $(\fe,[p])$ is called \textit{elementary abelian}, provided $[\fe,\fe]=\{0\}=V(\fe)$. Recall that the subset
\[ \EE(r,\fg):=\{\fe \in \Gr_r(\fg) \ ; \ \fe \subseteq \fg \ \text{is elementary abelian}\}\]
of the Grassmannian $\Gr_r(\fg)$ is closed and hence a projective variety. This implies that
\[ V_{\EE(r,\fg)} := \bigcup_{\fe \in \EE(r,\fg)} \fe\]
is a conical, closed subset of $V(\fg)$. We say that $\fg$ is \textit{$r$-saturated}, provided $V(\fg)=V_{\EE(r,\fg)}$.  

Let $\rk(\fg)$ be the minimal dimension of all Cartan subalgebras of $\fg$, the so-called \textit{rank} of $\fg$. We denote by $T(\fg)$ the 
\textit{toral radical} of $\fg$, that is, the unique maximal toral ideal of $\fg$. Note that $T(\fg)$ is contained in the center $C(\fg)$. For $x \in \fg$,
$(kx)_p$ denotes the $p$-subalgebra of $\fg$ that is generated by $x$. We have $(kx)_p = \sum_{i\ge 0}kx^{[p]^i}$, so that $(kx)_p$ is contained
in the centralizer $C_\fg(x)$ of $x$ in $\fg$. 

\bigskip

\begin{Lem} \label{CenSat1} Let $(\fg,[p])$ be a restricted Lie algebra. Then the following statements hold:
\begin{enumerate}
\item If $\fh \subseteq \fg$ is a $p$-subalgebra such that $\dim V(\fh) = 1$, then
\[ \dim_k\fh \le \mu(\fh)\!+\!\rk(\fg)\]
\item If $x \in \fg$ is such that 
\begin{enumerate}
\item $x^{[p]} \in T(\fg)$, and
\item $\dim V(C_\fg(x)) \le 1$, \end{enumerate}
then $\dim_k\fg \le p(\mu(C_\fg(x))\!+\!\rk(\fg))$. \end{enumerate} \end{Lem}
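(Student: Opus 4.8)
The plan is to prove (2) from (1) together with an elementary estimate on $\ad x$, so that the real content lies in (1).

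\medskip\noindent\textbf{Reduction of (2) to (1).} Since $x^{[p]}\in T(\fg)\subseteq C(\fg)$, we have $\ad(x)^p=\ad(x^{[p]})=0$, so that $N:=\ad x$ is a nilpotent endomorphism of $\fg$ with $N^p=0$. Consequently every Jordan block of $N$ has length at most $p$, while the number of blocks equals $\dim_k\ker N=\dim_k C_\fg(x)$; summing the block lengths yields $\dim_k\fg\le p\,\dim_k C_\fg(x)$. Next, $C_\fg(x)$ is a $p$-subalgebra: if $[y,x]=0$, then $[y^{[p]},x]=\ad(y)^p(x)=0$. By hypothesis (b) it satisfies $\dim V(C_\fg(x))\le 1$. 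If this dimension equals $1$, I apply part (1) with $\fh=C_\fg(x)$; if it equals $0$, then $C_\fg(x)$ is a torus (Chwe, \cite{Ch65}), whence $\dim_k C_\fg(x)=\mu(C_\fg(x))$. In either case $\dim_k C_\fg(x)\le\mu(C_\fg(x))+\rk(\fg)$, and substituting into the previous estimate gives $\dim_k\fg\le p(\mu(C_\fg(x))+\rk(\fg))$, as desired.

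\medskip\noindent\textbf{Approach to (1).} Rewriting the claim as $\dim_k\fh-\mu(\fh)\le\rk(\fg)$, I would first interpret the left-hand side intrinsically. Fix a maximal torus $\ft\subseteq\fh$ (so $\dim_k\ft=\mu(\fh)$); then $C_\fh(\ft)=\ft\oplus\fn$ is a Cartan subalgebra with $\fn=V(C_\fh(\ft))\subseteq V(\fh)$, and the root-space decomposition $\fh=C_\fh(\ft)\oplus\bigoplus_{\alpha}\fh_\alpha$ shows, via the Jordan--Chevalley decomposition $y=y_s+y_n$, that the $p$-nilpotent elements of $\fh$ (those with $y_s=0$) form a cone of dimension $\dim_k\fh-\mu(\fh)$. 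The hypothesis $\dim V(\fh)=1$ is then very rigid: it rules out any copy of $\fsl(2)$ (which alone would contribute $\dim V\ge 2$), and it forbids two $[p]$-independent nilpotent directions over a common bottom, since for commuting $p$-nilpotent $y,z$ with $y^{[p]}=z^{[p]}$ one has $(y-z)^{[p]}=0$, which would enlarge $V(\fh)$. I expect this to force the $p$-nilpotent cone of $\fh$ to be a single chain $(ky)_p=\langle y,y^{[p]},\dots,y^{[p]^{\ell-1}}\rangle$, with $y^{[p]^{\ell-1}}$ spanning $V(\fh)$, so that $\dim_k\fh-\mu(\fh)=\ell=\dim_k(ky)_p$.

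\medskip\noindent\textbf{Main obstacle.} The hard part is the final inequality $\dim_k(ky)_p\le\rk(\fg)$, i.e.\ that a $[p]$-chain of length $\ell$ forces \emph{every} Cartan subalgebra of $\fg$ to have dimension at least $\ell$. The mechanism I would exploit is that a torus can act through a nonzero weight on at most the very top of such a chain: if $[t,y]=\lambda y$, then $\ad(t)(y^{[p]})=-\ad(y)^p(t)=0$ (compare Lemma \ref{Gt2}), so $y^{[p]},\dots,y^{[p]^{\ell-1}}$ are centralized by every torus and cannot be ``split off'', forcing them into the nilpotent part of a minimal Cartan subalgebra. Turning this observation into the clean bound $\ell\le\rk(\fg)$ is where the generic-torus machinery of \cite{Fa04} should be brought to bear, and it is the step I expect to require the most care; the degenerate cases $\mu(\fh)=0$ and $\fh$ abelian can be verified by hand and serve as a useful sanity check on the constant.
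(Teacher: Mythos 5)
Your reduction of (2) to (1) is correct and is essentially the paper's own argument: from $x^{[p]}\in T(\fg)\subseteq C(\fg)$ one gets $(\ad x)^p=0$, and counting Jordan blocks of $\ad x$ gives $\dim_k\fg\le p\,\dim_kC_\fg(x)$; you then rightly split according to whether $\dim V(C_\fg(x))$ equals $1$ or $0$, invoking Chwe's theorem in the toral case (a degenerate case the paper passes over silently). The problem is part (1), which you do not actually prove. Your sketch leaves two key steps unestablished. First, the assertion that $\dim V(\fh)=1$ forces the entire $p$-nilpotent cone of $\fh$ to be a single chain $(ky)_p$, so that $\fh$ decomposes as a torus plus $(ky)_p$: your commuting-elements observation (that $y^{[p]}=z^{[p]}$ and $[y,z]=0$ imply $(y-z)^{[p]}=0$) says nothing about non-commuting nilpotent directions, and the identification of $\dim_k\fh-\mu(\fh)$ with the dimension of the nilpotent cone is itself a nontrivial theorem of Premet rather than a consequence of Jordan--Chevalley decompositions. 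The paper obtains this structure statement, $\fh=\ft\oplus(ky)_p$ with $\ft$ a torus, directly from \cite[(4.3)]{Fa95}.

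Second, and more seriously, the inequality $\dim_k(ky)_p\le\rk(\fg)$ --- which you correctly isolate as the crux --- is left as an expectation. Your proposed mechanism (every torus centralizes $y^{[p]},\dots,y^{[p]^{\ell-1}}$, so these elements cannot be ``split off'') does not by itself place the chain inside a Cartan subalgebra of minimal dimension, nor show that its members remain linearly independent modulo whatever else such a Cartan subalgebra contains; a Cartan subalgebra realizing $\rk(\fg)$ need not contain any prescribed $p$-nilpotent chain. The paper's proof of (1) consists precisely of citing \cite[(8.6(3))]{Fa04} for this bound, so essentially the entire content of the lemma resides in the two external results you have not supplied. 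As written, the proposal establishes (2) conditionally on (1) but leaves (1) open.
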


\begin{proof} (1) Since $\dim V(\fh)=1$, there exists a torus $\ft \subseteq \fh$ and a $p$-nilpotent element $y \in \fh$ such that
\[ \fh = \ft\!\oplus\!(ky)_p,\]
cf.\ for instance \cite[(4.3)]{Fa95}. The dimension of the first summand is bounded by $\mu(\fh)$, while \cite[(8.6(3))]{Fa04} implies $\dim_k(ky)_p 
\le \rk(\fg)$.

(2) Since $x^{[p]} \in T(\fg) \subseteq C(\fg)$, we have $(\ad x)^p=0$, so that $\fg$ affords a decomposition
\[ \fg = \bigoplus_{i=1}^p a_i[i]\]
into $(\ad x)$-cyclic subspaces, where $\dim_k[i]=i$. It follows that $\dim_kC_\fg(x)=\sum_{i=1}^pa_i$, while
\[ \dim_k\fg = \sum_{i=1}^pia_i \le p(\sum_{i=1}^pa_i) = p\dim_kC_\fg(x).\]
Using (1) we arrive at 
\[ \dim_k\fg \le p(\mu(C_\fg(x))\!+\!\rk(\fg)),\]
as desired. \end{proof}  

\bigskip

\begin{Remark} Lemma \ref{CenSat1} provides no information for nilpotent Lie algebra, as we have $\dim_k\fg = \rk(\fg)$ in that case. This is one
reason for confining our attention to generically toral Lie algebras. \end{Remark} 

\bigskip

\begin{Lem} \label{CenSat2} Let $(\fg,[p])$ be generically toral. Suppose there is $x \in V(\fg)\smallsetminus\!\{0\}$ such that $\dim V(C_\fg(x))=1$. 
Then we have
\[ \dim_k\fg \le p(2\mu(\fg)\!-\!1\!-\!\dim_kC(\fg)).\]
\end{Lem}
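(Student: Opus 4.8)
The plan is to refine the two estimates that already underlie Lemma \ref{CenSat1}. Since $x \in V(\fg)$ we have $x^{[p]}=0$, hence $(\ad x)^p = \ad(x^{[p]}) = 0$; decomposing $\fg$ into $(\ad x)$-cyclic blocks (each of length $\le p$), exactly as in the proof of Lemma \ref{CenSat1}(2), this gives $\dim_k\fg \le p\,\dim_k C_\fg(x)$. Everything thus reduces to the sharp estimate
\[ \dim_k C_\fg(x) \le 2\mu(\fg) - 1 - \dim_k C(\fg). \]
As $\dim V(C_\fg(x)) = 1$, the structure result \cite[(4.3)]{Fa95} invoked in the proof of Lemma \ref{CenSat1}(1) lets me write $C_\fg(x) = \fs \oplus (ky)_p$ with $\fs$ a torus of dimension $\le \mu(C_\fg(x))$ and $y$ a $p$-nilpotent element; it then suffices to bound the two summands by $\mu(\fg)-1$ and $\mu(\fg)-\dim_k C(\fg)$ respectively.

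For the torus part I would show $\mu(C_\fg(x)) \le \mu(\fg)-1$. Choose a maximal torus $\fs \subseteq C_\fg(x)$, so that $\dim_k\fs = \mu(C_\fg(x))$ and $x \in C_\fg(\fs)$. If $\dim_k\fs$ were equal to $\mu(\fg)$, then $\fs \in \mstor(\fg)$, and since $\fg$ is generically toral, $\fs$ would be self-centralizing by \cite[(3.8),(7.6)]{Fa04}; but then the nonzero $p$-nilpotent element $x$ would lie in the torus $\fs = C_\fg(\fs)$, which is absurd. Hence $\dim_k\fs \le \mu(\fg)-1$.

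For the nilpotent summand I would pass to $\fg' := \fg/C(\fg)$, which is generically toral by Lemma \ref{Gt1}. Since $C(\fg)$ is a torus and every element of $(ky)_p$ is $p$-nilpotent, we have $(ky)_p \cap C(\fg) = (0)$, so the projection $\pi$ maps $(ky)_p$ isomorphically onto the cyclic $p$-subalgebra $(k\pi(y))_p$; thus $\dim_k(ky)_p = \dim_k(k\pi(y))_p \le \rk(\fg')$ by \cite[(8.6(3))]{Fa04}. As $C(\fg)$ lies in every Cartan subalgebra of $\fg$, one has $\rk(\fg') = \rk(\fg) - \dim_k C(\fg)$, and since the self-centralizing torus of $\mstor(\fg)$ is a Cartan subalgebra of dimension $\mu(\fg)$ we get $\rk(\fg) \le \mu(\fg)$. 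Therefore $\dim_k(ky)_p \le \mu(\fg) - \dim_k C(\fg)$. Adding the two bounds yields the displayed estimate, and the initial reduction finishes the proof.

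I expect the main obstacle to be extracting the extra $\dim_k C(\fg)$: a direct appeal to Lemma \ref{CenSat1} only delivers $\dim_k\fg \le p(2\mu(\fg)-1)$, and the genuine improvement rests on observing that the cyclic nilpotent summand $(ky)_p$ survives the quotient by $C(\fg)$ intact, precisely where the rank drops by $\dim_k C(\fg)$. By comparison, the self-centralizing argument bounding the torus summand by $\mu(\fg)-1$ is routine.
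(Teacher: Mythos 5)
Your proof is correct and follows essentially the same route as the paper's: reduce to $\dim_k\fg\le p\dim_kC_\fg(x)$ via the $(\ad x)$-cyclic decomposition, split $C_\fg(x)=\ft\oplus(ky)_p$, bound the toral summand by $\mu(\fg)-1$ through the self-centralizing contradiction, and bound $(ky)_p$ by passing isomorphically to $\fg/C(\fg)$. The only (immaterial) divergence is that you obtain $\rk(\fg/C(\fg))\le\mu(\fg)-\dim_kC(\fg)$ from the Cartan-subalgebra correspondence and $\rk(\fg)\le\mu(\fg)$, whereas the paper invokes $\rk(\fg')=\mu(\fg')=\mu(\fg)-\dim_kC(\fg)$ directly from \cite[(3.5),(3.6)]{Fa04}.
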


\begin{proof} In view of Lemma \ref{Gt1}, the algebra $\fg':= \fg/C(\fg)$ is generically toral and \cite[(3.5),(3.6)]{Fa04} implies $\rk(\fg')=\mu(\fg')$. As before, we write
\[ C_\fg(x) = \ft\!\oplus\!(ky)_p,\]
for some torus $\ft \subseteq \fg$ and some $p$-nilpotent element $y \in \fg$.  Since $C(\fg)$ is a torus, the canonical projection
$\pi : \fg \lra \fg'$ provides an isomorphism $\pi : (ky)_p \lra (k\pi(y))_p$, so that \cite[(8.3)]{Fa04} in conjunction with \cite[(3.3)]{FV03} yields $\dim_k(ky)_p
\le \rk(\fg')=\mu(\fg')=\mu(\fg)\!-\!\dim_kC(\fg)$. The assumption $\mu(C_\fg(x))=\mu(\fg)$ implies that there is $\ft \in \mstor(\fg)$ with $x \in C_\fg(\ft)$. As $\ft$ is 
self-centralizing, we have reached a contradiction. Consequently, $\mu(C_\fg(x))\le \mu(\fg)\!-\!1$. The assertion now follows from Lemma \ref{CenSat1}. 
\end{proof}  

\bigskip

\begin{Cor} \label{CenSat3} Let $(\fg,[p])$ be a restricted Lie algebra. 
\begin{enumerate} 
\item If
\[ \dim_k\fg > p(\mu(\fg)\!+\!\rk(\fg)),\]
then $\fg$ is $2$-saturated. 
\item If $\fg$ is generically toral and 
\[ \dim_k\fg > p(2\mu(\fg)\!-\!1\!-\!\dim_kC(\fg)),\]
then $\fg$ is $2$-saturated. \end{enumerate} \end{Cor}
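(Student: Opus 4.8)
The plan is to argue by contraposition, converting $2$-saturation into a property of the centralizers of $p$-nilpotent elements. Since $V_{\EE(2,\fg)}\subseteq V(\fg)$ always holds, unwinding the definition shows that $\fg$ fails to be $2$-saturated precisely when some nonzero $x\in V(\fg)$ lies in no member of $\EE(2,\fg)$. The crux is to translate this failure into the single numerical condition $\dim V(C_\fg(x))=1$; once this is available, the two dimension estimates drop out directly from Lemmas \ref{CenSat1} and \ref{CenSat2}.

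I would therefore first record, for a fixed $x\in V(\fg)\smallsetminus\{0\}$, the equivalence that $x$ is contained in a member of $\EE(2,\fg)$ if and only if $\dim V(C_\fg(x))\ge 2$. One direction is formal: any $\fe\in\EE(2,\fg)$ with $x\in\fe$ is abelian and $p$-nilpotent, hence a two-plane sitting inside the nullcone $V(C_\fg(x))$ of the $p$-subalgebra $C_\fg(x)=\ker(\ad x)$. Conversely, $V(C_\fg(x))$ is a conical closed subvariety containing $x$; if $\dim V(C_\fg(x))\ge 2$ it cannot lie on the line $kx$, so one may choose $y\in V(C_\fg(x))\smallsetminus kx$. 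Since $[x,y]=0$, the formula for the $p$-power map collapses to $(ax+by)^{[p]}=a^px^{[p]}+b^py^{[p]}=0$ for all scalars $a,b$, exhibiting $kx\oplus ky$ as a member of $\EE(2,\fg)$ through $x$. Because $x\in V(C_\fg(x))$ already forces $\dim V(C_\fg(x))\ge 1$, a violation of $2$-saturation produces an $x$ with $\dim V(C_\fg(x))=1$.

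With such an $x$ in hand both assertions are immediate. For (1), we have $x^{[p]}=0\in T(\fg)$ and $\dim V(C_\fg(x))\le 1$, so Lemma \ref{CenSat1}(2) applies and yields $\dim_k\fg\le p(\mu(C_\fg(x))+\rk(\fg))$; as every torus of $C_\fg(x)$ is a torus of $\fg$ we have $\mu(C_\fg(x))\le\mu(\fg)$, whence $\dim_k\fg\le p(\mu(\fg)+\rk(\fg))$, contradicting the hypothesis. For (2), $\fg$ is generically toral and $\dim V(C_\fg(x))=1$, so Lemma \ref{CenSat2} gives $\dim_k\fg\le p(2\mu(\fg)-1-\dim_kC(\fg))$, again contradicting the hypothesis. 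In each case the assumed strict inequality rules out the offending $x$, so $V(\fg)=V_{\EE(2,\fg)}$ and $\fg$ is $2$-saturated.

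The genuinely new content, and hence the main obstacle, is the reduction of the second paragraph rather than the final estimates, which are quoted verbatim from the two lemmas. Two points deserve care: verifying that the candidate plane $kx\oplus ky$ is closed under the $p$-map—this is exactly where $[x,y]=0$ kills the mixed terms in $(ax+by)^{[p]}$—and invoking the elementary fact that a conical variety of dimension at least $2$ is not contained in any one-dimensional subspace. Everything else is bookkeeping with the inequalities $\mu(C_\fg(x))\le\mu(\fg)$ and $\dim V(C_\fg(x))\ge 1$.
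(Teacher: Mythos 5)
Your proposal is correct and follows essentially the same route as the paper: the authors likewise use Lemmas \ref{CenSat1} and \ref{CenSat2} (in contrapositive form) to force $\dim V(C_\fg(x))\ge 2$ for every nonzero $x\in V(\fg)$, and then pick $y\in V(C_\fg(x))\smallsetminus kx$ to produce the plane $kx\oplus ky\in\EE(2,\fg)$. Your write-up merely makes explicit two steps the paper leaves implicit, namely the bound $\mu(C_\fg(x))\le\mu(\fg)$ and the verification that $[x,y]=0$ makes $kx\oplus ky$ elementary abelian.
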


\begin{proof} Let $x \in V(\fg)\!\smallsetminus\!\{0\}$. In view of Lemma \ref{CenSat1} and Lemma \ref{CenSat2}, we have $\dim V(C_\fg(x)) \ge 2$. Hence 
there is $y \in V(C_\fg(x))\!\smallsetminus\!\{0\}$ such that $kx \ne ky$. Consequently, $kx\!\oplus\!ky \in \EE(2,\fg)$ and $x \in V_{\EE(2,\fg)}$. \end{proof}

\bigskip

\begin{Examples} We shall show that the restricted Lie algebras of Cartan type are $2$-saturated.
\begin{enumerate}
\item Let $W(n)$ be the Jacobson-Witt algebra. Then $W(n)$ is generically toral and $n=\mu(\fg)=\rk(\fg)$, while
\[ \dim_kW(n)=np^n > p(2n\!-\!1)\]
unless $n\!=\!1$. It follows that $V(W(n))=V_{\EE(2,W(n))}$ for $n\!\ge\! 2$. For $n\!=\!1$, the result follows from the fact that $\dim_kC_\fg(x)<p$ and 
$\mu(C_\fg(x))=0$ for every $x \in V(W(1))$. 

\item Let $S(n)$ be the special Lie algebra, where $n\ge 3$. Then we have $\dim_k S(n) = (n\!-\!1)(p^n\!-\!1)$ and Demushkin's work \cite[Thm.2,Cor.2]{De1}
yields $\rk(S(n)) = (n\!-\!1)(p\!-\!1)$. Thus,
\[ \dim_k S(n) = (\sum_{i=0}^{n-1}p^i)\rk(S(n)) > 2p\rk(S(n)),\]
and Corollary \ref{CenSat3} implies $V(S(n))=V_{\EE(2,S(n))}$.

\item Consider the Hamiltonian algebra $H(2r)$. Then we have $\dim_kH(2r)=p^{2r}\!-\!2$, while $\rk(H(2r))=p^r\!-\!2$, cf.\ \cite[Cor.]{De2}. Consequently,
\[ 2p\rk(H(2r)) = 2p^{r+1}\!-\!4p < p^{2r}\!-\!2,\]
unless $r\!=\!1$. In that case, we have $\mu(H(2))=1$, and
\[ p(\rk(H(2))\!+\!\mu(H(2))) = p(p\!-\!2\!+\!1)=p^2\!-\!p <p^2\!-\!2 = \dim_kH(2).\]
Thus, Corollary \ref{CenSat3} shows that $H(2r)$ is $2$-saturated. 

\item Consider the contact algebra $K(2r\!+\!1)$. Demushkin's work yields $\mu(K(2r\!+\!1)) = r\!+\!1$. Thanks to \cite[(3.3)]{Ch16a}, we have $\rk(K(2r\!+\!1)) \in \{p^r,p^r\!-\!1\}$. 
Consequently,
\begin{eqnarray*}
\dim_kK(2r\!+\!1) & \ge & p^{2r+1}\!-\!1 = (p\!-\!1)(\sum_{i=0}^{2r}p^i) > p(\sum_{i=0}^{2r-1}p^i)\\ 
& \ge & p(1\!+p(2r\!-\!2)+p^r) \ge p(r\!+\!1\!+\!p^r)\\
& \ge & p(\mu(K(2r\!+\!1))\!+\rk(K(2r\!+\!1)))
\end{eqnarray*}
for $r \ge 2$. For $r=1$, we have
\[ \dim_kK(3) \ge p^3\!-\!1 > p(2\!+\!p) \ge p(\mu(K(3))\!+\!\rk(K(3))).\]
The desired result now follows from Corollary \ref{CenSat3}. \end{enumerate} \end{Examples}

\bigskip

\section{Central Extensions}
Let $(\fg,[p])$ be a restricted Lie algebra. Given a $\fg$-module $M$, we denote by $\HH^n(\fg,M)$ the $n$-th Chevalley-Eilenberg cohomology group of $\fg$
with coefficients in $M$. Recall that $\HH^1(\fg,M)$ is isomorphic to the quotient $\Der_k(\fg,M)/{\rm Inn}(\fg,M)$ of derivations by inner derivations, while 
$\HH^2(\fg,M)$ describes the equivalence classes of abelian extensions of $\fg$ by $M$, cf.\ \cite[VII]{HS} for more details. A special case is given by a 
central extension
\[ (0) \lra \fc \lra \fg \lra \fg/\fc \lra (0),\]
where $\fc \subseteq C(\fg)$ is a torus. If $\HH^2(\fg/\fc,k)=(0)$, then such an extension splits in the category of ordinary Lie algebras, but not necessarily
in the category of restricted Lie algebras. However, one does have good control of the $p$-map in this situation.

\bigskip  

\begin{Lemma} \label{CE1} Let $(\fg,[p])$ be a restricted Lie algebra with toral center.
\begin{enumerate}
\item The canonical projection $\pi : \fg \lra \fg/C(\fg)$ induces an injective, closed morphism
\[ \pi_\ast : \EE(2,\fg) \lra \EE(2,\fg/C(\fg)) \ \ ; \ \ \fe \mapsto \pi(\fe).\]
\item If $\HH^2(\fg/C(\fg),k)=(0)$, then $\pi_\ast$ is bijective. \end{enumerate}
\end{Lemma}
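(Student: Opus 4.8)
The plan is to establish well-definedness, the morphism property, and closedness first, then injectivity, and finally surjectivity under the extra hypothesis. For well-definedness of $\pi_\ast$, the key observation is that torality of $C(\fg)$ forces $C(\fg)\cap V(\fg)=(0)$: if $t\in C(\fg)$ satisfies $t^{[p]}=0$, then bijectivity of the $p$-map on the torus $C(\fg)$ gives $t=0$. Hence for $\fe\in\EE(2,\fg)$ the restriction $\pi|_\fe$ is injective (as $\fe\subseteq V(\fg)$), so $\pi(\fe)$ is again two-dimensional. Since $\pi$ is a homomorphism of restricted Lie algebras, $[\pi(\fe),\pi(\fe)]=\pi([\fe,\fe])=(0)$ and $\pi(x)^{[p]}=\pi(x^{[p]})=0$ for $x\in\fe$, so $\pi(\fe)\in\EE(2,\fg/C(\fg))$. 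To see that $\pi_\ast$ is a morphism, I would realize it through the Plücker embedding: in Plücker coordinates it is induced by the linear map $\textstyle\bigwedge^2\pi\colon\bigwedge^2\fg\to\bigwedge^2(\fg/C(\fg))$, which defines a morphism on the locus where $\bigwedge^2\pi(v_1\wedge v_2)\ne 0$, i.e.\ where $\pi|_\fe$ is injective; by the above this locus contains $\EE(2,\fg)$. Closedness is then automatic, since $\EE(2,\fg)$ and $\EE(2,\fg/C(\fg))$ are closed in their respective (projective) Grassmannians, hence complete, and a morphism of projective varieties is closed.

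For injectivity, suppose $\fe,\ff\in\EE(2,\fg)$ with $\pi(\fe)=\pi(\ff)$. Then $\fe\subseteq\pi^{-1}(\pi(\fe))=\ff\oplus C(\fg)$, the direct decomposition holding because $\ff\cap C(\fg)=(0)$. Given $x\in\fe$, write $x=y+z$ with $y\in\ff$ and $z\in C(\fg)$. Since $z$ is central, all iterated brackets of $y$ and $z$ vanish, so the Jacobson relations degenerate to $x^{[p]}=(y+z)^{[p]}=y^{[p]}+z^{[p]}$. As $x,y\in V(\fg)$ this yields $z^{[p]}=0$, and torality of $C(\fg)$ forces $z=0$. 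Thus $\fe\subseteq\ff$, and equality follows by comparing dimensions.

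For surjectivity in (2), the hypothesis $\HH^2(\fg/C(\fg),k)=(0)$ enters through the splitting of the central extension $(0)\to C(\fg)\to\fg\to\fg/C(\fg)\to(0)$ in the category of ordinary Lie algebras: because $C(\fg)$ is a trivial module, $\HH^2(\fg/C(\fg),C(\fg))\cong\HH^2(\fg/C(\fg),k)\otimes_k C(\fg)=(0)$, so there is a Lie algebra section $\sigma\colon\fg/C(\fg)\to\fg$ of $\pi$. Given $\bar\fe\in\EE(2,\fg/C(\fg))$, the plane $\fe_0:=\sigma(\bar\fe)$ is abelian (as $\sigma$ is a homomorphism and $\bar\fe$ is abelian) with $\pi(\fe_0)=\bar\fe$, but its elements need not be $p$-nilpotent. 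The identity $\pi(\sigma(\bar x)^{[p]})=\bar x^{[p]}=0$ shows $\sigma(\bar x)^{[p]}\in C(\fg)$, so I would correct $\sigma$ by a central linear map. Setting $\theta(\bar x):=\sigma(\bar x)^{[p]}$, additivity of the $p$-map on the abelian plane $\fe_0$ together with $(\lambda\sigma(\bar x))^{[p]}=\lambda^p\sigma(\bar x)^{[p]}$ make $\theta\colon\bar\fe\to C(\fg)$ additive and $p$-semilinear. Since $C(\fg)$ is a torus, $\phi:=[p]|_{C(\fg)}$ is an additive, $p$-semilinear bijection of $C(\fg)$; as $k$ is perfect, its inverse is $p^{-1}$-semilinear, and hence $c:=\phi^{-1}\circ(-\theta)\colon\bar\fe\to C(\fg)$ is genuinely $k$-linear. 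Defining $\tau(\bar x):=\sigma(\bar x)+c(\bar x)$ and $\fe:=\tau(\bar\fe)$, centrality of $c(\bar x)$ keeps $\fe$ a two-dimensional abelian plane with $\pi(\fe)=\bar\fe$, while $\tau(\bar x)^{[p]}=\sigma(\bar x)^{[p]}+\phi(c(\bar x))=\theta(\bar x)-\theta(\bar x)=0$ shows $\fe\subseteq V(\fg)$. Thus $\fe\in\EE(2,\fg)$ with $\pi_\ast(\fe)=\bar\fe$.

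I expect the main obstacle to be the surjectivity step, and within it the construction of a genuinely restricted lift. One cannot split the extension over $\bar\fe$ alone, since the local obstruction living in $\HH^2(\bar\fe,k)$ need not vanish; the global section furnished by $\HH^2(\fg/C(\fg),k)=(0)$ is therefore essential. Moreover the correction term is forced to be Frobenius-twisted, so its $k$-linearity relies crucially on the perfectness of $k$ together with the bijectivity of the $p$-map on the torus $C(\fg)$. The degeneration of the Jacobson $p$-map relations in the presence of central summands is the routine computational ingredient underlying both injectivity and the final correction.
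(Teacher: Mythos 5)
Your proof is correct and follows essentially the same route as the paper: injectivity comes from recovering an elementary abelian plane as the nullcone of its sum with the toral center, and surjectivity from a Lie-algebra splitting furnished by $\HH^2(\fg/C(\fg),k)=(0)$ followed by a central correction that exploits the surjectivity of the $p$-map on the torus $C(\fg)$. The only cosmetic difference is that the paper corrects the two basis vectors individually, choosing $v,w\in C(\fg)$ with $v^{[p]}=-\lambda(x)$ and $w^{[p]}=-\lambda(y)$ for a globally defined $p$-semilinear $\lambda$, whereas you package the same correction into the $k$-linear map $\phi^{-1}\circ(-\theta)$ on the plane.
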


\begin{proof} (1) By assumption, the center $C(\fg)$ is a torus. This readily implies that
\[ \pi_\ast : \EE(2,\fg) \lra \EE(2,\fg/C(\fg))\]
is a morphism of projective varieties. In particular, the map $\pi_\ast$ is closed. 

Let $\fe,\ff \in \EE(2,\fg)$ be such that $\pi(\fe)=\pi(\ff)$. Then $\fe\!\oplus\!C(\fg)= \ff\!\oplus\!C(\fg)$, so that $\fe = V(\fe\!\oplus\!C(\fg)) = V(\ff\!\oplus\!C(\fg)) = \ff$.  
Hence $\pi_\ast$ is injective.

(2) Since $\HH^2(\fg/C(\fg),k)=(0)$, the exact sequence
\[ (0) \lra C(\fg) \lra \fg \lra \fg/C(\fg) \lra (0)\]
splits as a sequence of ordinary Lie algebras. Hence there is a subalgebra $\fh \subseteq \fg$ such that
\[ \fg = \fh\!\oplus\!C(\fg).\]
As $\fh \cong \fg/C(\fg)$, there is a $p$-map on $\fh$ such that $\pi|_\fh$ is an isomorphism of restricted Lie algebras. General theory (cf.\ \cite[(II.2.1)]{SF}) provides a 
$p$-semilinear map $\lambda : \fh \lra C(\fg)$ such that
\[ (h,c)^{[p]} = (h^{[p]},\lambda(h)\!+\!c^{[p]})\]
for all $(h,c) \in \fg$. Let $\fe \in \EE(2,\fg/C(\fg))$. Then there exist $x,y \in V(\fh)$ such that $\fe = k\pi(x,0)\!\oplus\!k\pi(y,0)$. Since $C(\fg)$ is a torus, 
we can find $v,w \in C(\fg)$ such that $v^{[p]}=-\lambda(x)$ and $w^{[p]}=-\lambda(y)$. It follows that $(x,v), (y,w) \in V(\fg)$, while $[(x,v),(y,w)] = ([x,y],0) 
= (0,0)$. As a result, $\ff := k(x,v)\!\oplus\!k(y,w) \in \EE(2,\fg)$ and $\pi_\ast(\ff) = \fe$. Consequently, the morphism $\pi_\ast$ is surjective. \end{proof}

\bigskip

\begin{Example} Let $\fh:= kx\!\oplus\!ky\!\oplus\!kz$ be the Heisenberg Lie algebra with toral center. By definition, its Lie bracket and $p$-map are given by
\[ [x,y]=z \ , \ [x,z]=0=[y,z] \ \ \text{and} \ \ x^{[p]}=0=y^{[p]} \ , \ z^{[p]}=z,\]
respectively. Then we have $\EE(2,\fh)=\emptyset$, while $\EE(2,\fh/C(\fh))=\{\fh/C(\fh)\}$. \end{Example}

\bigskip
\noindent
Let $(\fg,[p])$ be a generically toral restricted Lie algebra. We say that $\fg$ is {\it freely generated}, if there exists a maximal torus $\ft \in \mstor(\fg)$ and 
a subset $S_\ft \subseteq R_\ft$ such that
\begin{enumerate}
\item[(a)] $[\fg_\alpha,\fg_{-\alpha}] \ne (0)$ for all $\alpha \in S_\ft$, and 
\item[(b)] $\ft = \bigoplus_{\alpha \in S_\ft} [\fg_\alpha,\fg_{-\alpha}]$, and 
\item[(c)] the Lie algebra $\fg$ is generated by $\bigcup_{\alpha \in S_\ft}\fg_{\alpha}$.
\end{enumerate} 
In this case, we call $\ft$ a \textit{distinguished torus}.

\bigskip

\begin{Lemma} \label{CE2} Suppose that $\fg$ is freely generated. If $\rho(\fg)=1$, then $\HH^2(\fg,k)=(0)$. \end{Lemma}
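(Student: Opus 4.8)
The plan is to compute $\HH^2(\fg,k)$ from the Chevalley--Eilenberg complex $\Lambda^\bullet\fg^\ast$ by exploiting the action of the distinguished torus $\ft$. Since $\ft\subseteq\fg$ acts on $\Lambda^\bullet\fg^\ast$ by the coadjoint Lie derivatives $\mathcal{L}_t$ ($t\in\ft$), and this action commutes with the differential, the complex decomposes into weight subcomplexes. Every weight of the $\ft$-action takes values in $\FF_p$ on toral elements, so for a nonzero weight $\lambda$ I may choose a toral $t\in\ft$ with $\lambda(t)\in\FF_p^\times\subseteq k^\times$; Cartan's homotopy formula $\mathcal{L}_t=d\iota_t+\iota_t d$ then exhibits a contracting homotopy on the weight-$\lambda$ subcomplex, which is therefore acyclic. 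Hence $\HH^2(\fg,k)$ is the cohomology of the weight-zero subcomplex. First I would record, using $\rho(\fg)=1$ and fixing a root vector $x_\beta$ spanning each $\fg_\beta$, that the weight-zero cochains are $C^1_0=\ft^\ast$ and $C^2_0=\Lambda^2\ft^\ast\oplus\bigoplus_{\{\beta,-\beta\}}k\,(x_\beta^\ast\wedge x_{-\beta}^\ast)$, the second sum running over unordered pairs of roots $\beta,-\beta\in R_\ft$ (note $\beta\ne-\beta$ as $p\ne 2$).

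A weight-zero $2$-cocycle $\omega$ is thus determined by an alternating form $B:=\omega|_{\Lambda^2\ft}$ together with the scalars $c_\beta:=\omega(x_\beta,x_{-\beta})$, all remaining pairings vanishing for weight reasons. Evaluating the cocycle identity on a triple $(t,x_\beta,x_{-\beta})$ with $t\in\ft$ collapses, after cancellation of the $c_\beta$-terms, to $B([x_\beta,x_{-\beta}],t)=0$; writing $h_\beta:=[x_\beta,x_{-\beta}]$ and invoking condition (b), which makes $\{h_\alpha:\alpha\in S_\ft\}$ a basis of $\ft$, I conclude $B=0$. It then remains to realise $\omega$ as a coboundary $df$ with $f\in\ft^\ast=C^1_0$; since $df|_{\Lambda^2\ft}=0$ and $df(x_\beta,x_{-\beta})=-f(h_\beta)$, I must produce an $f$ with $f(h_\beta)=-c_\beta$ for every root $\beta$ such that $-\beta\in R_\ft$.

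I would define $f$ on the basis by $f(h_\alpha):=-c_\alpha$ for $\alpha\in S_\ft$ (legitimate by (b)) and prove $f(h_\beta)=-c_\beta$ for all remaining $\beta$ by induction on the minimal bracket-length $L(\beta)$ of $x_\beta$ in the generators $\fg_\alpha$ ($\alpha\in S_\ft$), which is finite by the generation hypothesis (c). For $L(\beta)\ge 2$ I would write $x_\beta$ (up to a nonzero scalar) as $[x_\delta,x_\epsilon]$ with $\delta\in S_\ft$, $L(\epsilon)<L(\beta)$ and $N_{\delta,\epsilon}\ne 0$, where $[x_\mu,x_\nu]=N_{\mu,\nu}x_{\mu+\nu}$. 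The decisive observation is that the cocycle identity on the triple $(x_\delta,x_\epsilon,x_{-\beta})$ expresses $c_\beta$ through $c_\delta,c_\epsilon$ with exactly the same structure constants with which the Jacobi identity expresses $h_\beta$ through $h_\delta,h_\epsilon$; applying $-f$ to the latter relation and substituting the inductive hypotheses $-f(h_\delta)=c_\delta$ and $-f(h_\epsilon)=c_\epsilon$ then yields $-f(h_\beta)=c_\beta$. Consequently $\omega=df$, the weight-zero cohomology vanishes, and therefore $\HH^2(\fg,k)=(0)$.

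I expect the main obstacle to be the inductive step of the last paragraph. In characteristic $p$ the roots carry no positivity or genuine height, so the induction cannot be organised along a height grading; instead it must be driven by the generation property (c), and its success hinges on the precise matching between the cocycle identity and the Jacobi identity on triples of roots summing to zero. I would also treat with care the degenerate cases in which $-\epsilon$ fails to be a root (so that the corresponding structure constant and the symbol $c_\epsilon$ drop out of the relation), but these cause no difficulty, since the offending terms vanish on the $c$-side and on the $h$-side simultaneously.
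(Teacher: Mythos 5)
Your proof is correct, and it reaches the conclusion by a more elementary, self-contained route than the paper's, although the underlying strategy is parallel. The paper does not compute in the Chevalley--Eilenberg complex for $\HH^2(\fg,k)$ at all: it invokes the canonical embedding $\HH^2(\fg,k)\hookrightarrow\HH^1(\fg,\fg^\ast)$ from \cite{Fa86} and shows that every derivation $\varphi:\fg\lra\fg^\ast$ of degree $0$ is inner, citing \cite{Fa88} for the reduction to degree $0$ --- which is exactly the weight-space reduction you carry out by hand via the Cartan homotopy $\mathcal{L}_t=d\iota_t+\iota_t d$. Both arguments then use conditions (a) and (b) in the same way, to manufacture $f\in\ft^\ast$ normalizing the cocycle (resp.\ derivation) on the root vectors indexed by $S_\ft$, and condition (c) to propagate the resulting vanishing to all of $\fg$. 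The difference lies in how the propagation is done: in the paper, once $\psi=\varphi-\ad f$ kills the generating root spaces, the Leibniz rule forces $\psi=0$ with no induction; your induction on bracket length, matching the cocycle identity on $(x_\delta,x_\epsilon,x_{-\beta})$ against the Jacobi identity, is the hand-computed shadow of that Leibniz argument (under the correspondence $\omega\mapsto(x\mapsto\omega(x,-))$ your identity relating $c_\beta$ to $c_\delta,c_\epsilon$ is precisely the derivation property evaluated at $x_{-\beta}$). What your version buys is independence from the two cited results; what it costs is the extra bookkeeping you correctly flag --- that a minimal nonzero iterated bracket in $\fg_\beta$ may be taken left-normed so that $x_\beta$ is a nonzero multiple of some $[x_\delta,x_\epsilon]$ with $\delta\in S_\ft$, and the degenerate case $-\epsilon\notin R_\ft$, where indeed the corresponding terms drop out of the cocycle identity and the Jacobi identity simultaneously. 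Both of these points check out, as does your computation that the cocycle condition on $(t,x_\beta,x_{-\beta})$ reduces to $B(h_\beta,t)=0$ and hence, by (a) and (b), to $B=0$.
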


\begin{proof} Let $\ft \in \mstor(\fg)$ be distinguished with root space decomposition
\[ \fg = \ft \!\oplus\!\bigoplus_{\alpha \in R_\ft}\fg_\alpha.\]
We let $S_\ft \subseteq R_\ft$ be a subset such that the defining conditions (a), (b) and (c) hold. Since $\rho(\fg)=1$, we write $\fg_\alpha = kx_{\alpha}$ for every $\alpha \in R_\ft$.

Let $\varphi : \fg \lra \fg^\ast$ be a derivation of degree $0$, that is, $\varphi(\fg_\alpha) \subseteq (\fg^\ast)_\alpha$ for all $\alpha \in R_\ft$. Owing to (a) and
(b), there is a linear form 
$f \in (\fg^\ast)_0$ such that
\[ f([x_\alpha,x_{-\alpha}]) = -\varphi(x_\alpha)(x_{-\alpha}) \ \ \ \ \ \ \forall \ \alpha \in S_\ft.\]
We let 
\[ \ad f : \fg \lra \fg^\ast \ \ ; \ \ x \mapsto x\dact f\]
be the inner derivation effected by $f$ and consider the derivation $\psi := \varphi\!-\!\ad f$. Given $\alpha \in S_\ft$, we have $\psi(x_\alpha) \in (\fg^\ast)_\alpha$ 
and 
\[ \psi(x_\alpha)(x_{-\alpha}) = \varphi (x_\alpha)(x_{-\alpha}) \!-\! (x_\alpha\dact f)(x_{-\alpha}) = \varphi (x_\alpha)(x_{-\alpha}) \!+\! f([x_\alpha,x_{-\alpha})]
=0.\]
Since $\psi$ has degree $0$, this implies that $\psi(\sum_{\alpha \in S_\ft}\fg_\alpha)=(0)$.  As $\fg$ is generated by $\sum_{\alpha \in S_\ft}\fg_\alpha$, it follows 
that $\psi =0$, so that $\varphi = \ad f$ is an inner derivation. Thanks to \cite[(1.2)]{Fa88}, this forces all derivations to be inner, whence $\HH^1(\fg,\fg^\ast) =(0)$. 
The assertion now follows from the canonical inclusion $\HH^2(\fg,k) \hookrightarrow \HH^1(\fg,\fg^\ast)$, cf.\ \cite[(1.3)]{Fa86}. \end{proof}  

\bigskip

\begin{Remark} Condition (a) is essential for the validity of Lemma \ref{CE2}. The Witt algebra $W(1):=\bigoplus_{i=-1}^{p-2}ke_i$ is generically toral with maximal 
torus $ke_0$ and root spaces $W(1)_i:=ke_i$. It is generated by $W(1)_{-1}\!\oplus \!W(1)_2$, and we have $ke_0=[W(1)_{-1},W(1)_1]$, while 
$[W(1)_2,W(1)_{-2}]=(0)$. For $p\ge 5$, the space $\HH^2(W(1),k)$ is known to be one-dimensional (cf.\ \cite[(5.1)]{Bl}). \end{Remark}

\bigskip

\section{Lie algebras with enough roots}\label{S:ER}
Let $(\fg,[p])$ be a restricted Lie algebra. Recall that
\[ \msrke(\fg) = \max\{r \in \NN_0 \ ; \ \EE(r,\fg) \ne \emptyset\}\]
is the $p$-rank of $\fg$. As noted before, a result by Chwe \cite{Ch65} asserts that $\fg$ is a torus if and only if $\msrke(\fg)=0$.

In this section we show that, in the context of Lie algebras with $\msrke(\fg)=1$, our assumption concerning the existence of self-centralizing maximal tori is fulfilled in most cases of interest. 

\bigskip

\begin{Proposition} \label{LR1} Let $(\fg,[p])$ be a restricted Lie algebra such that $\msrke(\fg)=1$. If $r(\fg) \ge 2$, then $\fg$ is generically toral.
\end{Proposition}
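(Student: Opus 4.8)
The plan is to proceed by contraposition: assuming $\fg$ is not generically toral, I will exhibit a member of $\EE(2,\fg)$, contradicting $\msrke(\fg)=1$. Fix $\ft\in\mstor(\fg)$. Failure of generic torality means $C_\fg(\ft)\supsetneq\ft$. Since the Cartan subalgebra $C_\fg(\ft)$ is nilpotent, its semisimple elements form its unique (central) maximal torus, which must be $\ft$; thus $C_\fg(\ft)=\ft\oplus\fn$ with $\fn\neq(0)$ the ideal of $p$-nilpotent elements. Passing to a terminal $p$-th power of a nonzero element of $\fn$, I fix $z\in\fn$ with $z^{[p]}=0$, so that $z\in V(\fg)\smallsetminus\{0\}$ lies in $C_\fg(\ft)$.

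Because $\EE(2,\fg)=\emptyset$, any two commuting, linearly independent elements of $V(\fg)$ would span a member of $\EE(2,\fg)$; hence $V(C_\fg(u))=ku$ for every $u\in V(\fg)\smallsetminus\{0\}$, and in particular $V(C_\fg(z))=kz$. The goal thus reduces to producing some $w\in V(C_\fg(z))$ with $kw\neq kz$. I will also use the following fact about root vectors: for a root $\alpha$ and $x\in\fg_\alpha$ one has $x^{[p]}\in\fg_{p\alpha}=\fg_0=C_\fg(\ft)$, so the semisimple part of $x^{[p]}$, being a semisimple element of the nilpotent algebra $C_\fg(\ft)$, lies in $\ft$; as this part equals $(x_s)^{[p]}$ and $x_s\in(k\,(x_s)^{[p]})_p$, it follows that $x_s\in\ft$, and $[x_s,x_n]=0$ then gives $x_s\in\ker\alpha$. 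Consequently the $p$-nilpotent part $x_n=x-x_s$ has nonzero $\ft$-weight and satisfies $x_n^{[p]}\in\fn$.

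Now I invoke $r(\fg)\geq2$. Since $z\in C_\fg(\ft)$, the operator $\ad z$ preserves and acts nilpotently on each root space $\fg_\alpha$, so there is $0\neq x_\alpha\in\fg_\alpha$ with $[z,x_\alpha]=0$; then $(kx_\alpha)_p\subseteq C_\fg(z)$, so its $p$-nilpotent part $n_\alpha$ lies in $C_\fg(z)$ and has nonzero weight. If $n_\alpha^{[p]}=0$ for some root $\alpha$, then $n_\alpha\in V(C_\fg(z))$ with $kn_\alpha\neq kz$ (distinct weights), giving the contradiction at once. Otherwise I choose two roots $\alpha\neq\beta$ and replace $n_\alpha,n_\beta$ by suitable iterated $p$-th powers $u_\alpha\in(kn_\alpha)_p$, $u_\beta\in(kn_\beta)_p$ whose own $p$-th powers are nonzero scalar multiples $c_\alpha z,c_\beta z$ of $z$ — these exist because terminal powers fall into $V(C_\fg(z))=kz$, and each of $u_\alpha,u_\beta$ still centralizes $z$. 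Granting that $[u_\alpha,u_\beta]=0$ and that $u_\alpha,u_\beta$ are independent modulo $kz$, the element $w:=c_\beta^{1/p}u_\alpha-c_\alpha^{1/p}u_\beta$ satisfies $w^{[p]}=0$ and $kw\neq kz$, so $kz\oplus kw\in\EE(2,\fg)$ — the sought contradiction.

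The hard part is the last step: securing both the commutativity $[u_\alpha,u_\beta]=0$ and the independence of $u_\alpha,u_\beta$ modulo $kz$. Commutativity I expect to obtain from Corollary \ref{RSD2}, which provides commuting vectors in distinct root spaces (choosing the two roots, or two vectors in a root space of dimension $\ge2$, so that the relevant brackets vanish). Independence modulo $kz$ is the genuine obstacle: here the single-line nullcone $V(C_\fg(z))=kz$ must be played against the presence of at least two roots to prevent all these $p$-nilpotent elements from collapsing onto $kz$. Establishing that no such collapse occurs — equivalently, that two roots always supply a $[p]$-killed element transverse to $z$ — is the technical core on which the proof hinges.
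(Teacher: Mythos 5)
Your setup is sound and runs parallel to the paper's own opening moves: you fix $\ft\in\mstor(\fg)$, extract a nonzero $z\in V(C_\fg(\ft))$ from the failure of generic torality, observe that $\EE(2,\fg)=\emptyset$ forces $V(C_\fg(z))=kz$, and note that $\ad z$ acts nilpotently on each root space, so that $C_\fg(z)\cap\fg_\alpha\ne(0)$ for every $\alpha\in R_\ft$. The case $n_\alpha^{[p]}=0$ is also handled correctly. But the argument then stops exactly at the decisive point: you write ``granting that $[u_\alpha,u_\beta]=0$ and that $u_\alpha,u_\beta$ are independent modulo $kz$'' and concede that establishing these is ``the technical core on which the proof hinges.'' That core is missing, and the route you sketch for it does not work. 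Corollary \ref{RSD2} produces \emph{some} commuting pair of root vectors under dimension hypotheses on the root spaces; it gives no control over whether that pair centralizes $z$, so it cannot be used to arrange $[u_\alpha,u_\beta]=0$ for elements that are forced to lie in $C_\fg(z)$. Moreover, your $u_\gamma$ are iterated $[p]$-th powers of $n_\gamma=x_\gamma-(x_\gamma)_s$: since $(x_\gamma)_s\in\ker\gamma$ commutes with $x_\gamma$, already $n_\gamma^{[p]}=x_\gamma^{[p]}-(x_\gamma)_s^{[p]}$ lies in $\fg_0=C_\fg(\ft)$, so whenever the terminal exponent exceeds one both $u_\alpha$ and $u_\beta$ sit inside the Cartan subalgebra, the ``distinct weights'' mechanism for independence modulo $kz$ evaporates, and nothing you have written prevents the collapse onto $kz$ that you yourself flag as the genuine obstacle.

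The paper closes this gap by a different mechanism: instead of hunting for two explicit commuting nilpotents, it analyzes the whole centralizer $C_\fg(z)$ as a restricted Lie algebra with one-dimensional nullcone. By \cite[(3.2),(4.3)]{Fa95}, such an algebra decomposes as $kt\!\ltimes\!(T(C_\fg(z))\!\oplus\!(ky)_p)$ with abelian $p$-ideal; playing this against the $\ft$-weight space decomposition of $C_\fg(z)$ (using that $T(C_\fg(z))$ consists of semisimple elements of $\fh$, hence lies in $\ft$) yields $C_\fg(z)=\ft\!\oplus\!(ky)_p$ with $\ft\!\oplus\!(ky^{[p]})_p\subseteq C_\fg(z)_0$, whence $\dim_kC_\fg(z)/C_\fg(z)_0\le 1$ and $C_\fg(z)$ supports at most one nonzero weight. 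Since $C_\fg(z)_\alpha\ne(0)$ for every root $\alpha$, this forces $r(\fg)\le 1$, the desired contradiction. To complete your write-up you would need this (or an equivalent) structural input on $C_\fg(z)$; the linear-algebra manipulations with terminal $p$-th powers alone do not suffice.
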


\begin{proof} Let $\ft \in \mstor(\fg)$ be a torus of dimension $\mu(\fg)$, $\fh := C_\fg(\ft)$ be the corresponding Cartan subalgebra of dimension $\rk(\fg)$
with root space decomposition
\[ \fg = \fh\!\oplus\!\bigoplus_{\alpha \in R_\ft}\fg_\alpha.\]
Suppose there is $x \in V(\fh)\!\smallsetminus\!\{0\}$. Since $[\ft,\fh]=(0)$, we have $\ft \subseteq C_\fg(x)$, and there results a weight space decomposition
\[ C_\fg(x) = C_\fg(x)_0\!\oplus\!\bigoplus_{\alpha \in R_\ft}C_\fg(x)_{\alpha}\]
of $C_\fg(x)$ relative to $\ft$. As $x$ acts nilpotently on each root space $\fg_\alpha$, we have $C_\fg(x)_\alpha \ne (0)$ for all $\alpha \in R_\ft$. 
Moreover, $\ft$ is a maximal torus of $C_\fg(x)$, so that $C_\fg(x)_0 = \fh \cap C_\fg(x)$ is a Cartan subalgebra of $C_\fg(x)$. 

In view of $\msrke(\fg) = 1$, we have $V(C_\fg(x))=kx$, and \cite[(3.2),(4.3)]{Fa95} provides a toral element $t \in C_\fg(x)$ and a $p$-nilpotent element 
$y \in C_\fg(x)$ such that
\[ C_\fg(x) = kt\!\ltimes\!I,\]   
where $I:=T(C_\fg(x))\!\oplus\!(ky)_p$ is an abelian $p$-ideal of $C_\fg(x)$. Consequently, $I^{[p]}=T(C(\fg))\!\oplus\!(ky^{[p]})_p$ lies in the center $Z(x)$ of 
$C_\fg(x)$. Since $\fh\cap C_\fg(x)$ is a Cartan subalgebra of $C_\fg(x)$, it follows that 
\[ T(C_\fg(x))\!\oplus\!(ky^{[p]})_p \subseteq Z(x) \subseteq C_\fg(x)\cap\fh.\] 
As $\ft$ is the set of semisimple elements of $\fh$, it follows that $T(C_\fg(x)) \subseteq \ft$. Observing that $(\ft\!+\!I)/I$ is a maximal torus of $C_\fg(x)/I$, while 
$\ft\cap(ky)_p = (0)$, we conclude that $C_\fg(x)=\ft\!\oplus\!(ky)_p$. Moreover, $\ft\!\oplus (ky^{[p]})_p\subseteq C_\fg(x)_0$, so that 
$\dim_kC_\fg(x)/C_\fg(x)_0 \le 1$. If $C_\fg(x)=C_\fg(x)_0$, then $R_\ft=\emptyset$ and $r(\fg)=0$, a contradiction. Alternatively, $\ft$ acts on the 
one-dimensional space $C_\fg(x)/C_\fg(x)_0$, so that we find $\alpha \in R_\ft$ and $x_\alpha \in C_\fg(x)_\alpha\!\smallsetminus\!\{0\}$ such that
\[ C_\fg(x) = C_\fg(x)_0\!\oplus\!kx_\alpha.\]
In view of the above, we thus have $\fg = \fh\!\oplus\!\fg_\alpha$, so that $r(\fg)=1$, a contradiction.

Hence $V(\fh)=\{0\}$, implying that $\fh$ is a torus.  As a result, the restricted Lie algebra $\fg$ is generically toral. \end{proof}

\bigskip

\begin{Remark} The foregoing result fails for Lie algebras with $r(\fg)=1$: Let $\fg := kt\!\oplus\!kx\!\oplus\!ky$, where
\[ [y,\fg] = (0) \ , \ [t,x] = x \ \ ; \ \ t^{[p]} = t \ , \ x^{[p]}=y \ , \ y^{[p]}=0.\]
Then $\fh := kt\!\oplus\!ky$ is a Cartan subalgebra of $\fg$, and $V(\fg)=V(\fh)=ky$. Hence $\mu(\fg)=1$, $\fg$ is not generically toral, and
$\msrke(\fg)=1$.  \end{Remark} 

\bigskip

\begin{Corollary} \label{LR2} Let $(\fg,[p])$ be a restricted Lie algebra such that $\msrke(\fg)=1$. If $\fg$ is centerless or not solvable, then $\fg$ is 
generically toral. \end{Corollary}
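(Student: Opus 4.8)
The plan is to derive the corollary from Proposition \ref{LR1} by establishing the following sharper contrapositive: if $\msrke(\fg)=1$ and $\fg$ is \emph{not} generically toral, then $\fg$ is solvable and $C(\fg)\ne(0)$. Granting this, the corollary is immediate, since a centerless $\fg$ violates $C(\fg)\ne(0)$ and a non-solvable $\fg$ violates solvability, so in either case $\fg$ is forced to be generically toral. To set things up I fix $\ft\in\mstor(\fg)$ and put $\fh:=C_\fg(\ft)$. By the facts recorded in the subsection on generically toral algebras together with Chwe's theorem, $\fg$ is generically toral exactly when $\fh=\ft$, i.e.\ when $V(\fh)=(0)$; hence the assumption that $\fg$ is not generically toral means $V(\fh)\ne(0)$. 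Proposition \ref{LR1} then forces $r(\fg)\le 1$, and I fix an element $x\in V(\fh)\smallsetminus\{0\}$ for later use.

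First I would dispose of solvability and of the easy case for the center. If $r(\fg)=0$, then $\fg=\fh$ is a Cartan subalgebra of itself, hence nilpotent; being non-zero (it contains $x$), it is solvable and has $C(\fg)=Z(\fg)\ne(0)$. If $r(\fg)=1$, say $R_\ft=\{\alpha\}$, then $2\alpha\notin R_\ft$ gives $[\fg_\alpha,\fg_\alpha]\subseteq\fg_{2\alpha}=(0)$, so $\fg_\alpha$ is an abelian ideal with $\fg/\fg_\alpha\cong\fh$ nilpotent; thus $\fg$ is again solvable. It remains only to produce a non-zero central element in the case $r(\fg)=1$.

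The crux is the following observation. For every $w\in\fg_\alpha$ the operator $\ad w$ maps $\fh$ into $\fg_\alpha$ and annihilates $\fg_\alpha$, so $(\ad w)^2=0$ and therefore $\ad(w^{[p]})=(\ad w)^p=0$ (as $p\ge3$); that is, $w^{[p]}\in C(\fg)$. If some $w\in\fg_\alpha$ has $w^{[p]}\ne0$ we are done. Otherwise $\fg_\alpha\subseteq V(\fg)$ is an abelian subspace of the nullcone, so $\msrke(\fg)=1$ forces $\dim_k\fg_\alpha=1$, say $\fg_\alpha=kw$. Since $x$ is $p$-nilpotent, $\ad x$ is nilpotent and acts on the line $kw$, whence $[x,w]=0$ and $w\in C_\fg(x)\cap V(\fg)=V(C_\fg(x))$. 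But $\msrke(\fg)=1$ shows that any $p$-nilpotent element commuting with $x$ lies in $kx$ (otherwise it would span a $2$-dimensional elementary abelian subalgebra together with $x$), so $V(C_\fg(x))=kx\subseteq\fh$, contradicting $0\ne w\in\fg_\alpha$. Hence this alternative is vacuous and $C(\fg)\ne(0)$.

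I expect this last step to be the main obstacle. The naive route, namely noting that $\ker\alpha\subseteq C(\fg)$, only produces a central element when $\mu(\fg)\ge2$ and gives nothing when the maximal torus is one-dimensional; this is precisely the delicate configuration (compare the Remark following Proposition \ref{LR1}, where the center is a line of $p$-nilpotent elements). What makes the argument uniform is the interplay of the nilpotence $(\ad w)^2=0$ of root vectors, which pushes $p$-th powers into the center, with the rigidity $V(C_\fg(x))=kx$ coming from $\msrke(\fg)=1$, which eliminates the residual one-dimensional root space.
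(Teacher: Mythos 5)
Your argument is correct and follows essentially the same route as the paper's: reduce to $r(\fg)\le 1$ via Proposition \ref{LR1}, observe solvability when $r(\fg)\le 1$, and in the case $r(\fg)=1$ exploit that $(\ad w)^2=0$ forces $\fg_\alpha^{[p]}\subseteq C(\fg)$, so that a nonzero $p$-nilpotent element of $C_\fg(\ft)$ together with a $p$-nilpotent root vector would span a rank-two elementary abelian subalgebra. You merely repackage this as a contrapositive (not generically toral implies solvable with $C(\fg)\ne(0)$), which is a harmless and slightly sharper reorganization of the same proof.
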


\begin{proof} Let $\ft \in \mstor(\fg)$, 
\[ \fg = C_\fg(\ft)\!\oplus\!\bigoplus_{\alpha \in R_\ft}\fg_\alpha\]
be the corresponding root space decomposition. 

If $r(\fg)=0$, then $\fg$ is nilpotent, hence not centerless and solvable, a contradiction. 

If $r(\fg)=1$, then $R_\ft=\{\alpha\}$, and $\fg_\alpha$ is an abelian ideal. This implies that $\fg$ is solvable. By assumption, $\fg$ is centerless, so that $\fg_\alpha^{[p]} = \{0\}$. 
Let $x \in V(C_\fg(\ft))$. Since $x$ acts nilpotently on $\fg_\alpha$, there is $x_\alpha \in \fg_{\alpha}\!\smallsetminus\!\{0\}$ such that $[x,x_\alpha] = 0$.
As $\msrke(\fg)=1$, we have $x=0$, so that $C_\fg(\ft)$ is a torus. The assertion now follows from Proposition \ref{LR1}. \end{proof} 

\bigskip

\section{Lie algebras with $\msrke(\fg)=1$}\label{S:Rk1}
Our goal is to classify those generically toral algebras that do not possess a two-dimensional elementary abelian Lie algebra. These are the Lie algebras with 
$\msrke(\fg)\le 1$. By way of illustration, we first consider algebraic Lie algebras.

\bigskip

\subsection{Algebraic Lie algebras}
Throughout this subsection, we let $G$ be an algebraic group with Lie algebra $\fg=\Lie(G)$. 

\bigskip

\begin{Thm} \label{AL1} The Lie algebra $\fg=\Lie(G)$ has rank $\msrke(\fg) \le 1$ if and only if there exists a torus $\ft \subseteq \fg$ such that
\begin{enumerate}
\item[(a)] $\fg \cong \ft\!\oplus\!\fsl(2)$, or
\item[(b)] $\fg \cong \ft\!\ltimes\!(kx)_p$, for some $p$-nilpotent element $x \in \fg$. \end{enumerate}\end{Thm}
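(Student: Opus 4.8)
The plan is to prove the two implications separately, handling the forward direction via the reductive-versus-solvable dichotomy furnished by Corollary \ref{LR2}. For the implication ``$\Leftarrow$'' I would compute the nullcone directly. In case (a) the $p$-map on $\ft\oplus\fsl(2)$ is the coordinatewise one, so $V(\ft\oplus\fsl(2))=\{0\}\oplus V(\fsl(2))$ and hence $\EE(r,\ft\oplus\fsl(2))=\EE(r,\fsl(2))$; since the centralizer in $\fsl(2)$ of a nonzero $p$-nilpotent element is the line it spans, $\EE(2,\fsl(2))=\emptyset$ and $\msrke=1$. In case (b), projection onto $\ft$ shows $V(\ft\ltimes(kx)_p)\subseteq (kx)_p$, and a short computation with the $p$-semilinear $p$-map on the abelian chain $(kx)_p=\bigoplus_i kx^{[p]^i}$ gives $V((kx)_p)=kx^{[p]^{m-1}}$ (where $x^{[p]^m}=0$), a space of dimension $\le 1$; thus $\EE(2,\fg)=\emptyset$ and $\msrke(\fg)\le 1$.

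For ``$\Rightarrow$'', I would first pass to $G=G^\circ$ and dispose of $\msrke(\fg)=0$ by Chwe's theorem, which gives a torus, i.e.\ case (b) with $x=0$; so assume $\msrke(\fg)=1$. If $\fg$ is generically toral, fix a self-centralizing $\ft\in\mstor(\fg)$; by the standard correspondence between maximal tori of $G$ and of $\fg=\Lie(G)$ one has $\ft=\Lie(T)$ for a maximal torus $T\subseteq G$, and the chain $\Lie(T)\subseteq \Lie(C_G(T))\subseteq C_\fg(\ft)=\ft$ forces $\Lie(C_G(T))=\Lie(T)$, whence $C_G(T)=T$ and $G$ is reductive. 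Writing $\fg=\fz\oplus[\fg,\fg]$ with $\fz=\Lie(Z(G)^\circ)$ a central torus and $[\fg,\fg]=\Lie([G,G])$ semisimple, I would show the semisimple rank is exactly $1$: it is nonzero as $\msrke(\fg)\ne 0$, and if it were $\ge 2$ one finds roots $\gamma,\delta$ with $\delta\ne\pm\gamma$ and $\gamma+\delta\notin\Phi$, so that $e_\gamma,e_\delta\in V(\fg)$ are independent and commute, yielding an element of $\EE(2,\fg)$. Hence $[G,G]$ is isogenous to $\SL(2)$ and $[\fg,\fg]\cong\fsl(2)$ (using $\Lie(\PGL(2))\cong\fsl(2)$ for $p\ge 3$); as $\fz\cap[\fg,\fg]\subseteq C(\fsl(2))=(0)$, a dimension count gives $\fg=\fz\oplus\fsl(2)$, which is case (a).

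If $\fg$ is not generically toral, then Corollary \ref{LR2} makes $\fg$ solvable, so $G=T\ltimes R_u(G)$ and $\fg=\ft\ltimes\fn$ with $\ft=\Lie(T)$ a maximal torus of $\fg$ and $\fn=\Lie(R_u(G))$ a $p$-nilpotent ideal. Here the decisive step is to choose a nonzero $c\in Z(\fn)$ with $c^{[p]}=0$ (possible since $Z(\fn)$ is a nonzero $p$-nilpotent restricted subalgebra): then $c\in V(\fg)$ and $\fn\subseteq C_\fg(c)$, so the hypothesis $\msrke(\fg)=1$, which yields $V(C_\fg(c))=kc$, collapses $V(\fn)$ to the line $kc$. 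Feeding $\dim V(\fn)=1$ into the structure result \cite[(4.3)]{Fa95} invoked in Lemma \ref{CenSat1} gives $\fn=\ft'\oplus(ky)_p$, and $p$-nilpotence of $\fn$ forces $\ft'=(0)$, so $\fn=(ky)_p$ and $\fg=\ft\ltimes(ky)_p$, which is case (b).

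The step I expect to be the main obstacle is precisely this solvable case: the rank-$1$ hypothesis only bites once one has produced a $p$-nilpotent element central in $\fn$ whose $\fg$-centralizer absorbs all of $\fn$, and the concluding monogenicity of $\fn$ rests entirely on the cited one-dimensional-nullcone classification. By comparison the reductive case is routine, apart from verifying that $\fg=\fz\oplus\fsl(2)$ holds as an internal direct sum (rather than merely up to isogeny) and that $\Lie(\PGL(2))\cong\fsl(2)$ as restricted Lie algebras over a field of characteristic $p\ge 3$.
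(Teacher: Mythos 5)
Your reverse implication and your treatment of the solvable case are sound in themselves, but the forward direction contains a genuine error at the point where you split into cases. You claim that if $\fg$ is generically toral then $G$ is reductive, arguing that $\Lie(C_G(T))=\Lie(T)$ forces $C_G(T)=T$ ``and $G$ is reductive''. The last inference is false: $C_G(T)=T$ only says that the Cartan subgroup attached to $T$ is a torus, which happens for many non-reductive groups. The Borel subgroup $B=T\ltimes \GG_a$ of $\SL(2)$ is a counterexample: $\fb_{\fsl(2)}=\Lie(B)=kt\oplus kx$ is generically toral (the line $kt$ is a self-centralizing torus) and has $\msrke(\fb_{\fsl(2)})=1$, yet $B$ is solvable and $\fb_{\fsl(2)}$ is of type (b), not of type (a). Your dichotomy ``generically toral versus not'' therefore does not separate the reductive case from the solvable one, and the generically toral branch of your argument would misclassify $\Lie(B)$ (and, more generally, any $\ft\ltimes(kx)_p$ on which $\ft$ acts without fixed vectors).

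The paper avoids this by splitting instead on whether the unipotent radical $U$ of $G$ is trivial. If $U\ne e_k$, it uses the $G$-stability of $V(C(\fu))$, where $\fu=\Lie(U)$, together with $\msrke(\fg)\le 1$ to produce a one-dimensional ideal $kx$ of $\fg$; every $y\in V(\fg)$ acts nilpotently on the line $kx$, hence commutes with $x$, forcing $V(\fg)=kx$ and type (b). If $U=e_k$, then $G$ is reductive and the highest-root argument (which you essentially reproduce) gives type (a). Your solvable-case mechanics (a central $p$-nilpotent $c\in Z(\fn)$, the inclusion $V(\fn)\subseteq V(C_\fg(c))=kc$, then \cite[(4.3)]{Fa95}) are fine, but in your proof they are only invoked when $\fg$ fails to be generically toral, which is not the right trigger. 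To repair the argument you would need to key the case division to $R_u(G)$ rather than to generic torality, or otherwise handle generically toral solvable Lie algebras separately.
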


\begin{proof} Let $U\subseteq G$ be the unipotent radical of $G$ and put $\fu:=\Lie(U)$. Then $G$ acts on $\fu$ via the adjoint representation, so that
$C(\fu)$ and $V(C(\fu))$ are $G$-stable subsets of $\fg$. 

Suppose that $U\ne e_k$, so that $\fu\ne (0)$. Then $C(\fu)\ne(0)$ is $p$-unipotent, and \cite{Ch65} yields $V(C(\fu))\ne\{0\}$. Since $\msrke(C(\fu))\le
\msrke(\fg) \le 1$, it follows that $V(C(\fu))=kx$ is one-dimensional. As a result, $kx$ is an ideal of $\fg$. 

Let $y \in V(\fg)$. Since $y$ acts nilpotently on $kx$, we obtain $[x,y]=0$. Hence $kx\!+\!ky$ is elementary abelian, so that $y \in kx$. Consequently,
$V(\fg)=kx$, and $\fg$ is of type (b).

Alternatively, the group $G$ is reductive. Let $T\subseteq G$ be a maximal torus with Lie algebra $\ft$,
\[ \fg = \ft\!\oplus\!\bigoplus_{\alpha \in \Phi}\fg_\alpha\]
be the root space decomposition of $\fg$ relative to $T$. If $\Phi=\emptyset$, then $\fg=\ft$ is of type (b). Alternatively, let $\Delta \subseteq \Phi^+\subseteq \Phi$ be subsystems of simple 
and positive roots, respectively. General theory tells us that $\bigcup_{\alpha \in \Phi}\fg_\alpha \subseteq V(\fg)$. Let $\alpha_0 \in \Phi$ be a root of maximal height.
If there is a positive root $\alpha_1 \in \Phi^+\!\smallsetminus\!\{\alpha_0\}$, then $[\fg_{\alpha_1},\fg_{\alpha_0}]=(0)$. As this contradicts $\msrke(\fg)
\le 1$, we conclude that $|\Phi^+|=1$. Consequently, $\Phi =\{\alpha_0,-\alpha_0\}$, and $G$ is of type $A_1$. This implies that $\fg$ is of type (a).

Finally, Lie algebras of type (a) or (b) are easily seen to have $p$-rank $\msrke(\fg)\le 1$. \end{proof} 

\bigskip

\subsection{Centralizers and roots}

\begin{Prop} \label{CR1} Let $\fg$ be generically toral and such that $\msrke(\fg)=1$. Given $\ft \in \mstor(\fg)$, the following statements
hold:
\begin{enumerate}
\item There is a root space decomposition $\fg = \ft\!\oplus\!\bigoplus_{\alpha \in R_\ft}\fg_\alpha$.
\item We have $C_\fg(x_\alpha)=(\ker\alpha)\!\oplus\!kx_\alpha$ for every $\alpha \in R_\ft$ and $x_\alpha \in \fg_\alpha\!\smallsetminus\!\{0\}$.
\item If $\dim_k\fg_\alpha=\rho(\fg)$, then $\dim_k\fg_\beta=1$ for all $\beta \in R_\ft\smallsetminus\!\{\alpha,-\alpha\}$.
\item If $\dim_k\fg_\alpha=\rho(\fg)$ and $R_\ft\!\smallsetminus\!\{\alpha,-\alpha,-2\alpha\} \ne \emptyset$, then $\rho(\fg)=1$.\end{enumerate}
\end{Prop}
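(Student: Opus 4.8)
The plan is to establish the four claims in sequence, using (2) as the key input for (3) and (4); I expect (2) to be the main obstacle. Claim~(1) is immediate: as $\fg$ is generically toral and $\ft\in\mstor(\fg)$, the torus $\ft$ is self-centralizing, so $\ft=C_\fg(\ft)$ is a Cartan subalgebra and the displayed decomposition is its weight space decomposition.

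For~(2) I fix $\alpha\in R_\ft$ and $x_\alpha\in\fg_\alpha\smallsetminus\{0\}$. One always has $\ker\alpha\oplus kx_\alpha\subseteq C_\fg(x_\alpha)$ with $\dim_k(\ker\alpha\oplus kx_\alpha)=\mu(\fg)$, so the content is the reverse inclusion. The difficulty is that $x_\alpha$ need not be $p$-nilpotent (witness $\fb_{\fsl(2)}^{-1}$), so $\msrke(\fg)=1$ cannot be applied to it directly. I would first use Lemma~\ref{Gt2} to note that $x_\alpha^{[p]}\in\ker\alpha\subseteq\ft$ is semisimple, and then pass to the Jordan--Chevalley decomposition $x_\alpha=(x_\alpha)_s+(x_\alpha)_n$ inside the abelian $p$-envelope $(kx_\alpha)_p$. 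Matching semisimple and nilpotent parts in $x_\alpha^{[p]}=(x_\alpha)_s^{[p]}+(x_\alpha)_n^{[p]}$ forces $(x_\alpha)_n^{[p]}=0$ and $(x_\alpha)_s\in(k\,x_\alpha^{[p]})_p\subseteq\ker\alpha$. Hence $u:=(x_\alpha)_n=x_\alpha-(x_\alpha)_s$ is a nonzero element of $V(\fg)$ with $\ker\alpha\oplus ku=\ker\alpha\oplus kx_\alpha$, and $C_\fg(x_\alpha)\subseteq C_\fg(u)$ since $u$ is a $p$-polynomial in $x_\alpha$.

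Now $u\in V(\fg)\smallsetminus\{0\}$, so $\msrke(\fg)=1$ forces $V(C_\fg(u))=ku$, and \cite[(4.3)]{Fa95} gives $C_\fg(u)=\ft_u\oplus ku$ for a maximal torus $\ft_u$ of $C_\fg(u)$. The torus $\ker\alpha$ lies in $C_\fg(u)$ and hence in $\ft_u$, while $\dim_k\ft_u\le\mu(\fg)-1$: a torus of dimension $\mu(\fg)$ would be a self-centralizing member of $\mstor(\fg)$ containing the nonzero $p$-nilpotent element $u$, which is impossible. Thus $\ft_u=\ker\alpha$ and $C_\fg(u)=\ker\alpha\oplus ku=\ker\alpha\oplus kx_\alpha$; the chain $\ker\alpha\oplus kx_\alpha\subseteq C_\fg(x_\alpha)\subseteq C_\fg(u)=\ker\alpha\oplus kx_\alpha$ then collapses, proving~(2).

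Claims~(3) and~(4) follow by weight-space contradictions against~(2). For~(3), suppose $\beta\in R_\ft\smallsetminus\{\alpha,-\alpha\}$ with $\dim_k\fg_\beta\ge2$; since $\fg_\alpha$ has maximal dimension and $\beta\ne-\alpha$, Corollary~\ref{RSD2} yields commuting nonzero $x\in\fg_\alpha$ and $y\in\fg_\beta$, whence $y\in C_\fg(x)=\ker\alpha\oplus kx$ has weight in $\{0,\alpha\}$, contradicting $\beta\notin\{0,\alpha\}$. For~(4), assume $\rho(\fg)\ge2$ and choose $\gamma\in R_\ft\smallsetminus\{\alpha,-\alpha,-2\alpha\}$; by~(3) we have $\dim_k\fg_\gamma=1$, and as $\gamma\notin\{0,-2\alpha\}$ the weight $\alpha+\gamma$ avoids $\{\alpha,-\alpha\}$, so $\dim_k[U,\fg_\gamma]\le\dim_k\fg_{\alpha+\gamma}\le1=\dim_k\fg_\gamma$ for any two-dimensional $U\subseteq\fg_\alpha$. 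Lemma~\ref{RSD1} then gives commuting nonzero $u\in\fg_\alpha$, $v\in\fg_\gamma$, and the same weight argument against $C_\fg(u)=\ker\alpha\oplus ku$ is contradictory; therefore $\rho(\fg)=1$.
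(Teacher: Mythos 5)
Parts (1), (3) and (4) are correct and essentially identical to the paper's argument (your use of Lemma \ref{RSD1} in (4) in place of the injectivity of $\ad x_\beta\colon\fg_\alpha\to\fg_{\alpha+\beta}$ is an immaterial variation), and your Jordan--Chevalley construction of $u=x_\alpha-(x_\alpha)_s$ produces the same element $x_\alpha-t_\alpha$ that the paper obtains via Lemma \ref{Gt2}. The gap is in (2), at the step ``\cite[(4.3)]{Fa95} gives $C_\fg(u)=\ft_u\oplus ku$''. That result only yields $C_\fg(u)=\ft_u\oplus(ky)_p$ for some torus $\ft_u$ and some $p$-nilpotent $y$, and $\dim V(C_\fg(u))=1$ does \emph{not} force $(ky)_p=ku$: it only forces $y^{[p]^n}\in ku\smallsetminus\{0\}$ for the last nonvanishing $p$-power of $y$ (already the two-dimensional algebra $(ky)_p$ with $y^{[p]}\ne 0=y^{[p]^2}$ has one-dimensional nullcone and trivial maximal torus). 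If $n\ge 1$, your dimension count collapses: you only get $\dim_kC_\fg(u)\le\mu(\fg)-1+n+1$, which does not pin $C_\fg(u)$ down. The paper spends a dedicated step excluding $n\ge 1$: it applies \cite[(4.3)]{Fa95} to $C_\fg(x_\alpha)$ --- which, unlike your $C_\fg(u)$, is $\ft$-stable because $x_\alpha$ is a weight vector --- and observes that $[\ft,y^{[p]}]\subseteq(\ad y)^{p-1}(C_\fg(x_\alpha))=(0)$ (the ideal $\ft_\alpha\oplus(ky)_p$ being abelian), whereas $[\ft,x_\alpha-t_\alpha]=kx_\alpha\ne(0)$; hence $y^{[p]^n}$ cannot be proportional to $x_\alpha-t_\alpha$ unless $n=0$. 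Your detour through $C_\fg(u)$ loses the $\ft$-stability needed to run this argument, so the omission is not merely cosmetic.

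A secondary, repairable point: ``the torus $\ker\alpha$ lies in $C_\fg(u)$ and hence in $\ft_u$'' is not justified --- a torus need not be contained in a prescribed maximal torus (e.g.\ $k(t+y)$ in $kt\ltimes ky$ with $[t,y]=y$ and $y^{[p]}=0$ is a torus not contained in $kt$). Once one knows $(ky)_p=ku$, this step can be bypassed by a pure dimension count: $\dim_k\ft_u\le\mu(\fg)-1$ gives $\dim_kC_\fg(u)\le\mu(\fg)$, while $\ker\alpha\oplus kx_\alpha\subseteq C_\fg(x_\alpha)\subseteq C_\fg(u)$ has dimension $\mu(\fg)$, forcing equality throughout. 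But as written, part (2) --- and with it (3) and (4), which depend on it --- is not proved.
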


\begin{proof} (1) Owing to \cite[(3.8)]{Fa04}, we have $C_\fg(\ft)=\ft$.

(2) Let $\alpha \in R_\ft$, $x_\alpha \in \fg_\alpha\!\smallsetminus\!\{0\}$. Then the centralizer $C_\fg(x_\alpha)$ is a $\ft$-stable $p$-subalgebra. Observing 
$x_\alpha \ne 0$, we obtain a weight space decomposition 
\[ C_\fg(x_\alpha) = (\ker \alpha)\!\oplus\!\bigoplus_{\beta \in R_\ft}C_\fg(x_\alpha)\cap\fg_\beta\]
relative to $\ft$. Since $\ker \alpha \subseteq \ft$ is a subtorus, Lemma \ref{Gt2} ensures the existence of $t_\alpha \in \ker\alpha$ such that $x_\alpha^{[p]} = 
t_\alpha^{[p]}$. This implies that $x_\alpha\!-\!t_\alpha \in V(\fg)\cap C_\fg(x_\alpha)$. Note that 
\[ f_\alpha : C_\fg(x_\alpha) \lra C_\fg(x_\alpha) \ \ ; \ \ y \mapsto [t_\alpha, y] \] 
is a semisimple linear transformation such that $f_\alpha^p=0$. Consequently, $f_\alpha=0$, so that $x_\alpha\!-\!t_\alpha$ belongs to the center $Z(x_\alpha)$ 
of $C_\fg(x_\alpha)$. Since $\msrke(\fg)=1$, it follows that $V(\fg)\cap C_\fg(x_\alpha)=k(x_\alpha\!-\!t_\alpha)$. 

Let $\ft_\alpha$ be the unique maximal toral ideal of $C_\fg(x_\alpha)$. Owing to \cite[(4.3)]{Fa95}, there exist a $p$-nilpotent element $y \in C_\fg(x_\alpha)$, 
and a toral element $t \in C_\fg(x_\alpha)$ such that
\[ C_\fg(x_\alpha) = kt\!\ltimes\!(\ft_\alpha\!\oplus (ky)_p),\]
where $\fn_\alpha:= \!\ft_\alpha\!\oplus (ky)_p$ is an abelian $p$-ideal of $C_\fg(x_\alpha)$. 

Since $V(C_\fg(x_\alpha))=k(x_\alpha\!-\!t_\alpha)$ there are $\lambda \in k^\times$ and $n \ge 0$ such that $y^{[p]^n}=\lambda(x_\alpha\!-\!t_\alpha)$.
In view of $[\ft,y^{[p]}] \subseteq (\ad y)^{p-1}(C_\fg(x_\alpha)) \subseteq (\ad y)^2(C_\fg(x_\alpha))= (0)$, while $[\ft,x_\alpha\!-\!t_\alpha] = kx_\alpha$, it follows 
that $n=0$. We therefore have $C_\fg(x_\alpha)=kt\!\oplus\ft_\alpha\!\oplus k(x_\alpha\!-\!t_\alpha)$. 

Since $\ft$ acts on $C_\fg(x_\alpha)$ by derivations, the center $Z(x_\alpha)$ is $\ft$-stable. As $\ft_\alpha$ is a toral ideal in $Z(x_\alpha)$, it follows that 
$(\ad s)(\ft_\alpha)=(0)$ for every $s \in \ft$. Consequently, 
$\ft_\alpha \subseteq C_\fg(x_\alpha)_0 = \ker \alpha$. If $\ft_\alpha \subsetneq \ker\alpha$, then we have $C_\fg(x_\alpha) = (\ker\alpha)\!\oplus\! 
k(x_\alpha\!-\!t_\alpha)$ for dimension reasons. Thus, $C_\fg(x_\alpha)= (\ker\alpha)\!\oplus\!kx_\alpha$.

Alternatively, we have $\ft_\alpha = \ker\alpha$. But then $[t,\ker\alpha] =(0)$, whence $[t,x_\alpha\!-\!t_\alpha] = 0$ as $t \in C_\fg(x_\alpha)$. Hence 
$[t,C_\fg(x_\alpha)] = (0)$, so that $t=0$ and $C_\fg(x_\alpha)=(\ker\alpha)\!\oplus\!kx_\alpha$. 

(3) Suppose there is $\beta \in R_\ft\!\smallsetminus\!\{\alpha,-\alpha\}$ such that $\dim_k\fg_\beta \ge 2$. Then Corollary \ref{RSD2} provides $x \in 
\fg_{\alpha}\!\smallsetminus\!\{0\}$ and  $y \in \fg_{\beta}\!\smallsetminus\!\{0\}$ such that $[x,y]=0$. Thus, $y \in C_\fg(x)$, which contradicts (2). 

(4) Let $\beta \ne \alpha, -\alpha, -2\alpha$ be a root, $x_\beta \in \fg_\beta\!\smallsetminus\!\{0\}$. In view of (2), the map
\[ f_\beta : \fg_\alpha \lra \fg_{\alpha+\beta} \ \ ; \ \ v \mapsto [x_\beta,v]\]
is injective. Since $\alpha\!+\!\beta \in R_\ft\!\smallsetminus\!\{\alpha,-\alpha\}$, part (3) implies $\dim_k\fg_\alpha \le \dim_k\fg_{\alpha+\beta} \le 1$.  \end{proof}

\bigskip
\noindent
Let $\ft \in \mstor(\fg)$ be maximal torus. Then $R_\ft \subseteq \ft^\ast$, and we put $r_\ft:= \dim_k\langle R_\ft \rangle$. If $\ft$ is self-centralizing, then
$\langle R_\ft \rangle^\perp = C(\fg)$, so that
\[ r_\ft = \dim_k\ft\!-\!\dim_kC(\fg) = \mu(\fg)\!-\!\dim_kC(\fg).\]
Given $\ft \in \mstor(\fg)$, we denote by $\ft_p := \{t \in \ft \ ; \ t^{[p]}=t\}$ the $\FF_p$-subspace of its toral elements. General theory implies that 
$\dim_{\FF_p}\ft_p=\mu(\fg)$. Since $\alpha(t^{[p]})=\alpha(t)^p$ for all $\alpha \in R_\ft$ and $t \in \ft$, it follows that every $\alpha \in R_\ft$ is
uniquely determined by its restriction $\alpha|_{\ft_p} \in \Hom_{\FF_p}(\ft_p,\FF_p)$. Moreover,
\[ r_\ft = \dim_{\FF_p} \FF_p\{\alpha|_{\ft_p} \ ; \ \alpha \in R_\ft\}\]
is the dimension of the $\FF_p$-span of $R_\ft$. By abuse of notation, we will henceforth consider $R_\ft$ a subset of $\Hom_{\FF_p}(\ft_p,\FF_p)$ whenever
this is convenient.
  
Let $k_{-1}$ be the one-dimensional restricted $\fb_{\fsl(2)}$-module, on which the given toral element $t \in \fb_{\fsl(2)}$ acts via $-1$. Thus, the 
semidirect product 
\[\fb_{\fsl(2)}^{-1}:= \fb_{\fsl(2)}\!\ltimes\!k_{-1} = kt\!\oplus\!kx\!\oplus\!ky\] 
has the following structure of a restricted Lie algebra:
\[ [t,x] = x \ , \ [t,y] = -y \ , \ [x,y] = 0 \ \ ; \ \ t^{[p]}=t \ , \ x^{[p]}=0=y^{[p]}.\]
Note that $\fb_{\fsl(2)}^{-1}$ is centerless and generically toral such that $\mu(\fb_{\fsl(2)}^{-1})=1=\rho(\fb_{\fsl(2)}^{-1})$. 
Moreover, $\EE(2,\fb_{\fsl(2)}^{-1})=\{kx\!\oplus\!ky\}$. 

\bigskip

\begin{Lem} \label{CR2} Let $\fg$ be a generically toral restricted Lie algebra such that $\msrke(\fg)=1=\rho(\fg)$. If $\mu(\fg)=1\!+\!\dim_kC(\fg)$, then 
\[\fg/C(\fg) \cong \fb_{\fsl(2)},\, \fb_{\fsl(2)}^{-1},\, \fsl(2).\] 
Moreover, if $\fg/C(\fg) \cong \fb_{\fsl(2)}^{-1}$, then $C(\fg)\ne (0)$.\end{Lem}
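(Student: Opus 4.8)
The plan is to first pass to the quotient $\bar\fg := \fg/C(\fg)$. By Lemma \ref{Gt1}, $\bar\fg$ is generically toral with trivial center, and $\mu(\bar\fg) = \mu(\fg) - \dim_k C(\fg) = 1$. Moreover, reducing modulo a central torus does not create new elementary abelian subalgebras of larger dimension (this is the content of Lemma \ref{CE1}, which gives an injection $\EE(2,\fg) \hookrightarrow \EE(2,\bar\fg)$), and it preserves $\rho$, so I would verify that $\msrke(\bar\fg) = 1 = \rho(\bar\fg)$ as well. Thus I may assume $C(\fg) = (0)$ from the outset, so that $\fg$ is centerless, generically toral, with $\mu(\fg) = 1$, $\rho(\fg) = 1$, and $\msrke(\fg) = 1$; the task becomes showing $\fg \cong \fsl(2), \fb_{\fsl(2)}$, or $\fb_{\fsl(2)}^{-1}$.

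\textbf{Analyzing the root system.} Since $\mu(\fg) = 1$, a maximal torus $\ft$ is one-dimensional, self-centralizing, and the roots $R_\ft$ live inside $\Hom_{\FF_p}(\ft_p, \FF_p) \cong \FF_p$; because $C(\fg) = (0)$ we have $r_\ft = 1$, so $R_\ft$ is a nonempty subset of $\FF_p^\times$ under the identification described before the lemma. All root spaces are one-dimensional by $\rho(\fg) = 1$, so $\dim_k \fg = 1 + |R_\ft|$. The key constraint comes from Proposition \ref{CR1}(4): fixing $\alpha \in R_\ft$, if there existed $\beta \in R_\ft \smallsetminus \{\alpha, -\alpha, -2\alpha\}$, then applying part (4) would be consistent with $\rho = 1$, so the real leverage is Corollary \ref{RSD2} together with Proposition \ref{CR1}(2), which forces $[x_\alpha, x_\beta] \ne 0$ whenever $\alpha + \beta \in R_\ft \cup \{0\}$ and $\beta \ne -\alpha$. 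I would show that $R_\ft$ cannot contain two $\FF_p$-independent directions (it is already one-dimensional), and then bound its size: the $p$-rank-$1$ condition, via Corollary \ref{RSD2} applied to pairs of roots, severely restricts which sums $\alpha + \beta$ can reappear as roots, forcing $R_\ft \subseteq \{\alpha, -\alpha\}$ or $R_\ft \subseteq \{\alpha, 2\alpha\}$-type configurations.

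\textbf{Identifying the three algebras.} After bounding $|R_\ft|$, I expect exactly the following cases. If $R_\ft = \{\alpha, -\alpha\}$ with $[\fg_\alpha, \fg_{-\alpha}] \ne (0)$, then $\ft \oplus \fg_\alpha \oplus \fg_{-\alpha}$ with the sandwich structure and the $p$-map constraints of Lemma \ref{Gt2} ($\fg_\alpha^{[p]} \subseteq \ker\alpha = (0)$) reconstructs either $\fsl(2)$ or $\fb_{\fsl(2)}^{-1}$, according to whether the Killing-type pairing is nondegenerate; the latter occurs precisely when $[\fg_\alpha,\fg_{-\alpha}] = (0)$, matching the defining relations of $\fb_{\fsl(2)}^{-1}$ given in the text. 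If instead $R_\ft = \{\alpha\}$ is a single root, then $\fg = \ft \oplus \fg_\alpha$ with $\fg_\alpha^{[p]} = 0$, giving $\fb_{\fsl(2)}$. The final assertion, that $\fg/C(\fg) \cong \fb_{\fsl(2)}^{-1}$ forces $C(\fg) \ne (0)$, I would obtain by contradiction: if $C(\fg) = (0)$ then $\fg \cong \bar\fg \cong \fb_{\fsl(2)}^{-1}$, but one checks directly that $\EE(2, \fb_{\fsl(2)}^{-1}) = \{kx \oplus ky\} \ne \emptyset$ (as noted in the text), contradicting $\msrke(\fg) = 1$. Hence a nonzero center is necessary to kill that elementary abelian plane upon lifting.

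\textbf{Main obstacle.} The delicate step is the case analysis bounding $|R_\ft|$ and distinguishing $\fsl(2)$ from $\fb_{\fsl(2)}^{-1}$: one must rule out intermediate root configurations (e.g. $\{\alpha, -\alpha, 2\alpha\}$ or longer $\FF_p$-strings) using Corollary \ref{RSD2} and Proposition \ref{CR1}, and then pin down the bracket $[\fg_\alpha, \fg_{-\alpha}]$ together with the $p$-map via Lemma \ref{Gt2} to recognize the isomorphism type. Verifying that $[\fg_\alpha, \fg_{-\alpha}] \ne (0)$ exactly reproduces $\fsl(2)$ (and that vanishing yields the nonsplit extension $\fb_{\fsl(2)}^{-1}$) is where the main computation lies.
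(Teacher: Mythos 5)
Your opening reduction is where the argument breaks. Lemma \ref{CE1} gives an injection $\pi_\ast : \EE(2,\fg) \hookrightarrow \EE(2,\fg/C(\fg))$, which yields the implication $\msrke(\fg/C(\fg)) \le 1 \Rightarrow \msrke(\fg) \le 1$ --- the opposite direction of what your reduction needs. Passing to $\fg/C(\fg)$ can strictly increase the $p$-rank: the Heisenberg algebra with toral center (the Example following Lemma \ref{CE1}) has $\EE(2,\fh)=\emptyset$ but $\EE(2,\fh/C(\fh))\ne\emptyset$, and the algebra $\fg^\lambda$ constructed immediately after Lemma \ref{CR2} is generically toral with $\msrke(\fg^\lambda)=1$ yet $\fg^\lambda/C(\fg^\lambda)\cong\fb_{\fsl(2)}^{-1}$, which has $p$-rank $2$. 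So you cannot ``assume $C(\fg)=(0)$'': if you did, the hypothesis $\msrke=1$ would exclude $\fb_{\fsl(2)}^{-1}$ outright (as you yourself observe, $\EE(2,\fb_{\fsl(2)}^{-1})\ne\emptyset$), making one of the three alternatives in the statement unreachable. The paper works inside $\fg$ throughout and only descends to $\fg/C(\fg)$ after the centralizer computations of Proposition \ref{CR1}(2) have been carried out upstairs.

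Second, the step you yourself flag as delicate --- ruling out roots $i\alpha$ with $i\ne\pm 1$ --- has no working mechanism in your sketch. Corollary \ref{RSD2} is vacuous here, since it requires a root space of dimension $\ge 2$ while $\rho(\fg)=1$. Proposition \ref{CR1}(2) does make $\ad x_\alpha : \fg_{i\alpha}\lra\fg_{(i+1)\alpha}$ injective for $i\ne 1$, but chasing root strings this way only propagates roots; it produces no contradiction from, say, $R_\ft=\FF_p\alpha\smallsetminus\{0\}$. The paper's treatment of this point is the bulk of its proof and is genuinely representation-theoretic: it sets $\fh=\fg_\alpha\oplus\ft\oplus\fg_{-\alpha}$ and $V=\fg/\fh$, then (i) when $[\fg_\alpha,\fg_{-\alpha}]\subseteq\ker\alpha$ invokes Strade's theorem on dimensions of simple modules of solvable Lie algebras to extract a one-dimensional composition factor of $V$, contradicting Proposition \ref{CR1}(2); and (ii) otherwise uses Premet's classification of indecomposable $U_0(\fsl(2))$-modules of dimension $<p$ together with the Friedlander--Parshall projectivity criterion to force $\fg/C(\fg)$ to be the Steinberg module, again a contradiction. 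None of this is recoverable from the root-string combinatorics you propose, so the case $V\ne(0)$ is left open in your argument.
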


\begin{proof} Let $\ft \in \mstor(\fg)$. By assumption, we have $r_\ft=1$, so that there is $\alpha \in R_\ft$ with $R_\ft \subseteq \FF_p\alpha$.
In view of Lemma \ref{Gt2}, we have $\fg_{i\alpha}^{[p]} \subseteq \ker\alpha = C(\fg)$ for all $i\in \FF_p^\times$.

Observing $\rho(\fg)=1$, we consider the $p$-subalgebra
\[ \fh := \fg_{\alpha}\!\oplus\!\ft\!\oplus\!\fg_{-\alpha}\]
along with the $\fh$-module $V:=\fg/\fh$. 

If $V=(0)$, then two cases arise. If $[\fg_\alpha,\fg_{-\alpha}] \subseteq \ker\alpha$, then $\fg/C(\fg) \cong \fb_{\fsl(2)}, \fb_{\fsl(2)}^{-1}$. Since
$\EE(2,\fb_{\fsl(2)}^{-1}) \ne \emptyset$, we must have $C(\fg) \ne (0)$ in the latter case. 
Alternatively, the restricted Lie algebra $\fg/C(\fg)$ is three-dimensional and not solvable, so that $\fg/C(\fg) \cong \fsl(2)$.

Hence we assume that $V\ne (0)$. This implies in particular that $p\ge 5$.

Suppose that $[\fg_\alpha,\fg_{-\alpha}] \subseteq \ker \alpha$. Then $\fh$ is solvable and such that 
\[ \fg_\alpha\!\oplus\!\fg_{-\alpha} \subseteq [\fh,\fh].\]
By Strade's Theorem (cf.\ \cite[(V.8.4)]{SF}), every simple $U_0(\fh)$-module has dimension a $p$-power. Since $\dim_kV \le p\!-\!3$, we conclude that 
every composition factor of $V$ is one-dimensional. Hence there is $i \in \{2,\ldots,p\!-\!2\}$ and a root vector $v \in \fg_{i\alpha}\!\smallsetminus\{0\}$ such that 
$(\fg_\alpha\!+\!\fg_{-\alpha})\dact v \subseteq \fh$. Thus, if $\fg_\alpha\dact v \ne (0)\ne \fg_{-\alpha}\dact v$, then 
$i\!+\!1, i\!-\!1 \in \{-1,0,1\}$, so that $i=0$, a contradiction. It follows that $v \in C_\fg(\fg_{\alpha})$ or $v \in C_\fg(\fg_{-\alpha})$, which contradicts 
Proposition \ref{CR1}(2).

We consider the factor algebra $\fg':=\fg/C(\fg)$ along with the canonical projection $\pi : \fg \lra \fg'$ and recall that $C(\fg)=\ker \alpha$. Then there is a toral element $t \in \ft$ such that
\[ \fg' = k\pi(t)\!\oplus\!\bigoplus_{i \in \FF_p^\times}\fg'_{i\alpha},\]
where $\pi : \fg_{i\alpha} \lra \fg'_{i\alpha}$ is an isomorphism and $\alpha$ is identified with the induced map $\ft/\ker\alpha \lra k$. Then we have
\[ C_{\fg'}(\fg'_\alpha) = \bigoplus_{i \in \FF_p}C_{\fg'}(\fg'_{\alpha})_{i\alpha}.\]
Since $[\fg'_\alpha, \fg'_{-\alpha}] \ne (0)$, it follows from Proposition \ref{CR1} that
\[ C_{\fg'}(\fg'_{\alpha}) = \fg'_\alpha.\]
Let $\fh':= \pi(\fh)$. Note that $\fh'=\fh/C(\fg)$ is isomorphic to $\fsl(2)$, where we pick $\pi(t)$ such that $\alpha(\pi(t))=2$. Given $x \in V(\fh')\!\smallsetminus\!
\{0\}$, there is $g \in \Aut_p(\fh')$ such that $x=g(x_\alpha)$. Setting $t':=g(\pi(t))$, the inverse image $\ft'=\pi^{-1}(kt')$ is a torus of dimension $\mu(\fg)$. The 
arguments above then show that $C_{\fg'}(x) = kx$. 

Recall that $\fg'$ is an $\fh'$-module of dimension $\le p$ such that $\fh' \subseteq \fg'$. In view of \cite{Pr}, every indecomposable $U_0(\fh')$-module of 
dimension $<p$ is simple. Thus, the $U_0(\fh')$-module $\fg'$ is either indecomposable or semi-simple. In the latter case, there exists an $\fh'$-submodule 
$W' \subseteq \fg'$ such that $\fg' = \fh'\oplus W'$. Since $V\ne (0)$, we have $W' \ne (0)$ and there is a weight vector $w' \in W'$ such that $[\fg'_\alpha,w']=0$, 
which contradicts $C_{\fg'}(\fg'_\alpha)=\fg'_\alpha$. Hence $\fg'$ is indecomposable and not simple, whence $\dim_k\fg'=p$. 

Let $x \in V(\fh')\!\smallsetminus\!\{0\}$. Since $C_{\fg'}(x)=kx$, it follows that $\fg'$ is a cyclic, projective $U_0(kx)$-module. A consecutive application of 
\cite[(3.2)]{FP87} and \cite[(1.4)]{FP86} then shows that $\fg'$ is a projective $U_0(\fh')$-module. Since $\dim_k \fg' = p$, we conclude that $\fg'$ is the Steinberg 
module for $U_0(\fh')$ and hence simple, a contradiction. 

As a result, we have $V=(0)$ and $\fg/C(\fg)$ has the asserted structure. \end{proof}

\bigskip

\begin{Example} Given $\fg:=\fb_{\fsl(2)}^{-1}$ as above, we consider the alternating form $\lambda : \fg\!\times\!\fg \lra k$ such that
\[ \lambda(x,y)=1 \ , \ \lambda(t,\fg)=0.\]
By definition, its Koszul differential $\partial(\lambda) \in \bigwedge^3(\fg)^\ast$ is an alternating $3$-form, which is uniquely determined by 
$\partial(\lambda)(t,x,y) \in k$. Direct computation shows that $\partial(\lambda)(t,x,y)=0$. Hence there is a one-dimensional non-split central extension 
$\fg^\lambda:=\fg\!\oplus\!kz$ of $\fg$ such that 
\[ [(u,\alpha z),(v,\beta z)] := ([u,v], \lambda(u,v)z)\]
for all $(u,\alpha z),(v,\beta z) \in \fg^\lambda$. The algebra $g^\lambda$ affords a $p$-map, given by
\[ (t,0)^{[p]} = (t,0) \ , \ (x,0)^{[p]}=(0,0) = (y,0)^{[p]} \ , \ (0,z)^{[p]}=(0,z).\]
Note that $\fg^\lambda$ is generically toral with $\mu(\fg^\lambda)=2=1\!+\dim_kC(\fg^\lambda)$.

The subspace $\fh := k(x,0)\!\oplus\!k(y,0)\!\oplus\!k(0,z)$ is a $p$-ideal of $\fg^\lambda$, which is isomorphic to the Heisenberg algebra with toral center, and 
such that $\fg/\fh$ is a torus. We therefore obtain $\EE(2,\fg^{\lambda}) = \EE(2,\fh) = \emptyset$, while $\fg^{\lambda}/C(\fg^\lambda) \cong \fb_{\fsl(2)}^{-1}$.
\end{Example} 

\bigskip
\noindent
Given $\ft \in \mstor(\fg)$ and $\alpha \in R_\ft$, we let
\[\fg[\alpha] := \bigoplus_{i \in \FF_p}\fg_{i\alpha}\]
be the \textit{$1$-section} of $\fg$ defined by $\alpha$. Note that $\fg[\alpha]$ is a $p$-subalgebra of $\fg$ such that $\mu(\fg[\alpha])=\mu(\fg)$.

\bigskip

\begin{Lem} \label{CR3} If $\fg$ is generically toral with $\msrke(\fg)=1$, then the following statements hold:
\begin{enumerate}
\item If $p\!=\!3$ and $r(\fg)\ge 2$, then $R_\ft\cup\{0\}$ is an $\FF_3$-vector space.
\item If $p\!\ge\! 5$, then 
\begin{enumerate}
\item $\rho(\fg) = 1$, and 
\item $\mu(\fg)=\dim_kC(\fg)\!+\!1$. \end{enumerate} \end{enumerate} \end{Lem}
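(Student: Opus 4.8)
The plan is to establish both statements by using the classification of $1$-sections provided by Lemma \ref{CR2} together with the structural information about centralizers in Proposition \ref{CR1}. The key observation is that for $p \ge 5$, whenever $\rho(\fg) \ge 2$ one can find a root space $\fg_\alpha$ of maximal dimension and then, using Proposition \ref{CR1}(4), derive strong constraints on the root system $R_\ft$. Specifically, if $\rho(\fg) \ge 2$, then Proposition \ref{CR1}(4) forces $R_\ft \subseteq \{\alpha, -\alpha, -2\alpha\}$ for any root $\alpha$ with $\dim_k \fg_\alpha = \rho(\fg)$. Since $R_\ft = -R_\ft$ (roots come in $\pm$ pairs when $\ft$ is self-centralizing and $\fg_{-\alpha} \ne 0$), this compatibility condition severely limits the possibilities, and I would argue that the surviving configurations all violate $\msrke(\fg) = 1$.

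First I would handle part (2)(a). Suppose for contradiction that $\rho(\fg) \ge 2$ and pick $\alpha \in R_\ft$ with $\dim_k \fg_\alpha = \rho(\fg)$. By Proposition \ref{CR1}(4), the set $R_\ft \smallsetminus \{\alpha, -\alpha, -2\alpha\}$ must be empty. I would then examine the $1$-section $\fg[\alpha]$, which by the remark preceding the lemma satisfies $\mu(\fg[\alpha]) = \mu(\fg)$, and is itself generically toral with $\msrke(\fg[\alpha]) \le \msrke(\fg) = 1$. Since $p \ge 5$, within $\fg[\alpha]$ the roots lie in $\FF_p \alpha$, and I expect to reduce to a two-dimensional root-string analysis (the $\fsl(2)$-theory for $\ft$-modules). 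The contradiction should emerge by producing, via Corollary \ref{RSD2} applied to a root space of dimension $\ge 2$ together with a suitable second root, two commuting $p$-nilpotent elements spanning a plane in $\EE(2,\fg)$; this contradicts $\msrke(\fg) = 1$. Carrying this out carefully for the case $R_\ft = \{\alpha, -\alpha\}$ versus the case involving $-2\alpha$ is where the real work lies.

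Once $\rho(\fg) = 1$ is established, part (2)(b) should follow more readily. With $\rho(\fg) = 1$, every root space is one-dimensional, and I would invoke Lemma \ref{CR2}: the claim $\mu(\fg) = \dim_k C(\fg) + 1$ is exactly the statement $r_\ft = 1$, i.e.\ all roots lie on a single $\FF_p$-line. I would prove this by showing that if two roots $\alpha, \beta$ were $\FF_p$-linearly independent, then one could exhibit a two-dimensional elementary abelian subalgebra. Indeed, with $\rho(\fg) = 1$ and root vectors $x_\alpha, x_\beta$, one examines $C_\fg(x_\alpha)$, which by Proposition \ref{CR1}(2) equals $(\ker \alpha) \oplus k x_\alpha$; if $\beta \not\in \FF_p \alpha$ then $\ker \alpha$ meets $\fg_\beta$ nontrivially only through toral directions, and a Jacobson-type argument on the $p$-nilpotent parts should yield a commuting pair of $p$-nilpotent root vectors, again violating $\msrke(\fg) = 1$.

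The hard part will be the combinatorial case analysis in part (2)(a), where the exotic root $-2\alpha$ is permitted by Proposition \ref{CR1}(4) but must ultimately be excluded. Here I anticipate needing to use Lemma \ref{Gt2} (giving $\fg_{i\alpha}^{[p]} \subseteq \ker(i\alpha) = \ker \alpha$) to control the $p$-map on the relevant root spaces, together with the fact that $p \ge 5$ guarantees enough room in the root string $\{-2\alpha, -\alpha, \alpha, 2\alpha, \dots\}$ to locate a genuine elementary abelian plane. The dependence on $p \ge 5$ is essential precisely because the $p = 3$ case, covered separately in part (1), genuinely behaves differently, so I would keep the arithmetic of $\FF_p$ explicit throughout this step.
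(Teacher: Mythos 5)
Your proposal has the right ingredients (Proposition \ref{CR1}(2),(4), Lemma \ref{Gt2}, Lemma \ref{CR2} applied to $1$-sections), but in each part the decisive step is missing, and part (1) is not addressed at all. For (2)(a) you anticipate a ``combinatorial case analysis'' distinguishing $R_\ft=\{\alpha,-\alpha\}$ from the case involving $-2\alpha$, in which the exotic root ``must ultimately be excluded.'' No such analysis is needed and nothing has to be excluded: once Proposition \ref{CR1}(4) gives $R_\ft\subseteq\{\alpha,-\alpha,-2\alpha\}$ and $p\ge 5$, the root $2\alpha$ does not occur, so $[\fg_\alpha,\fg_\alpha]\subseteq\fg_{2\alpha}=(0)$ and hence $\fg_\alpha\subseteq C_\fg(x_\alpha)$; since Proposition \ref{CR1}(2) identifies $C_\fg(x_\alpha)=(\ker\alpha)\oplus kx_\alpha$, this forces $\dim_k\fg_\alpha\le 1$ immediately. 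That one observation is the entire proof of (2)(a), and your sketch does not contain it.

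For (2)(b) the gap is more serious: your plan is to produce ``a commuting pair of $p$-nilpotent root vectors,'' but Proposition \ref{CR1}(2) -- which you cite in the same sentence -- says precisely that no nonzero root vector lies in the centralizer of a root vector from a different root space, so the object you intend to construct cannot exist, and the unspecified ``Jacobson-type argument'' cannot supply it. The contradiction has to come from the root system, not from an elementary abelian plane: from the injectivity of the maps $\ad x_\alpha,\ad x_\beta$ between the spaces $\fg_{i\alpha+j\beta}$ (a consequence of Proposition \ref{CR1}(2)) one deduces that both $\alpha+\beta$ and $2(\alpha+\beta)$ are roots; the $1$-section $\fg[\alpha+\beta]$ is then generically toral with $\msrke=1$, $\rho=1$ and $\mu=1+\dim_kC$, so Lemma \ref{CR2} restricts its roots to $\pm(\alpha+\beta)$, forcing $2(\alpha+\beta)=-(\alpha+\beta)$, which is impossible for $p\ge 5$. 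The same injective maps are what drive part (1) for $p=3$ (showing that $\FF_3\alpha\oplus\FF_3\beta\smallsetminus\{0\}\subseteq R_\ft$ whenever $\beta\notin\FF_3\alpha$), which your proposal leaves untouched.
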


\begin{proof} Let $\ft \in \mstor(\fg)$ be a torus of maximal dimension and let $\alpha \in R_\ft$. 

Suppose there is $\beta \in R_\ft\!\smallsetminus\!\FF_p\alpha$. We pick $x_\alpha \in \fg_\alpha\!\smallsetminus\!\{0\}$ and $x_\beta \in \fg_\beta\!
\smallsetminus\!\{0\}$. Proposition \ref{CR1}(2) implies that the adjoint representations of $x_\alpha$ and $x_\beta$ induce injective linear maps
\[ (\ast) \ \ \ \ \ f_{(\alpha,i,j)} : \fg_{i\alpha+j\beta} \lra \fg_{(i+1)\alpha+j\beta} \ \ ;  \ \ f_{(\beta,i,j)} : \fg_{i\alpha+j\beta} \lra \fg_{\alpha+(j+1)\beta} \ \ \ \ \ (i,j)\in \FF_p^2\]
unless $(\gamma,i,j) \in \{(\gamma,0,0), (\alpha, 1,0), (\beta, 0,1)\}$. 

(1) Let $p\!=\!3$. Let $\beta \in R_\ft\! \smallsetminus\! \FF_3\alpha$. In view of ($\ast$), application of suitable $f_{(\beta,1,j)}$ and $f_{(\alpha,i,1)}$ implies $\{\alpha,
\alpha\!+\!\beta, \alpha\!-\!\beta\} \cup \{\beta,\, \beta\!-\!\alpha\} \subseteq R_\ft$. Using first $f_{(\alpha,1,-1)}$, and then $f_{(\alpha,-1,-1)}$, $f_{(\beta,-1-1)}$ 
we thus obtain $\{-(\alpha\!+\beta), -\beta,-\alpha\} \subseteq R_\ft$, so that $\FF_3\alpha\!\oplus\!\FF_3\beta \subseteq R_\ft\cup\{0\}$. Hence $R_\ft\cup\{0\}$ is an 
$\FF_3$-vector space unless $R_\ft \subseteq \FF_3\alpha =\{0,\alpha,-\alpha\}$. As $r(\fg)\ge 2$, our assertion follows.

(2) Let $\alpha \in R_\ft$ be a root such that $\dim_k\fg_\alpha=\rho(\fg) \ge 2$. Then Proposition \ref{CR1}(4) yields $R_\ft\subseteq \{\alpha,-\alpha, -2\alpha\}$. 
As $p\ge 5$, $2\alpha$ is not a root, so that $\fg_{\alpha} \subseteq C_\fg(x_{\alpha})$ for all $x_{\alpha} \in \fg_{\alpha}\!\smallsetminus\!\{0\}$. This, however, 
contradicts Proposition \ref{CR1}(2). 

If $\beta \in R_\ft\!\smallsetminus\!\FF_p\alpha$, then the map
\[ f_{(\alpha,1,2)}\circ f_{(\beta,1,1)} \circ f_{(\beta,1,0)} :\fg_{\alpha} \lra \fg_{2(\alpha+\beta)}\]
is injective, so that $\{\alpha\!+\!\beta,2(\alpha\!+\!\beta)\} \subseteq R_\ft$. The $1$-section $\fg[\alpha\!+\!\beta]$ is a $p$-subalgebra such that $\rho(\fg[\alpha\!+\!\beta])=1=
\msrke(\fg[\alpha\!+\!\beta])$, while $\mu(\fg[\alpha\!+\!\beta])=1\!+\!\dim_kC(\fg[\alpha\!+\!\beta])$. Lemma \ref{CR2} thus yields $2(\alpha\!+\!\beta)=-(\alpha\!+\!\beta)$. 
As $p\!\ge\!5$, we obtain a contradiction, whence $R_\ft \subseteq \FF_p\alpha$. This readily implies (2). \end{proof}

\bigskip
\noindent
The foregoing results readily yield Theorem A:

\bigskip

\begin{Thm} \label{CR4} Suppose that $p\!\ge\!5$. Let $(\fg,[p])$ be generically toral and such that $\msrke(\fg)=1$. Then we have
\[ \fg/C(\fg) \cong \fsl(2), \fb_{\fsl(2)}, \fb^{-1}_{\fsl(2)},\]
with $C(\fg)\ne (0)$ in the latter case. \end{Thm}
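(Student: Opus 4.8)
The plan is to combine Lemma \ref{CR3} and Lemma \ref{CR2} directly. Since $p \ge 5$, part (2) of Lemma \ref{CR3} applies and yields two pieces of structural information simultaneously: namely $\rho(\fg)=1$, so that every root space of $\fg$ is one-dimensional, and $\mu(\fg) = \dim_k C(\fg)+1$, so that the $\FF_p$-span of the root system $R_\ft$ is one-dimensional. These are precisely the hypotheses needed to invoke Lemma \ref{CR2}.

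\medskip

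First I would fix a torus $\ft \in \mstor(\fg)$ of maximal dimension, which exists because $\fg$ is generically toral, and form the associated root space decomposition $\fg = \ft \oplus \bigoplus_{\alpha \in R_\ft}\fg_\alpha$ guaranteed by Proposition \ref{CR1}(1). By Lemma \ref{RSD3}, the invariants $\rho(\fg)$ and $r(\fg)$ are independent of the choice of $\ft$, so it is harmless to compute relative to this particular torus. Applying Lemma \ref{CR3}(2), I obtain $\rho(\fg)=1$ and $\mu(\fg)=\dim_k C(\fg)+1$.

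\medskip

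With these two identities in hand, the hypotheses of Lemma \ref{CR2} are satisfied: $\fg$ is generically toral, $\msrke(\fg)=1=\rho(\fg)$, and $\mu(\fg)=1+\dim_k C(\fg)$. That lemma then delivers the conclusion outright, namely
\[ \fg/C(\fg) \cong \fsl(2),\ \fb_{\fsl(2)},\ \fb^{-1}_{\fsl(2)},\]
together with the supplementary assertion that $C(\fg)\ne(0)$ whenever $\fg/C(\fg)\cong \fb^{-1}_{\fsl(2)}$, since $\EE(2,\fb^{-1}_{\fsl(2)})\ne\emptyset$ would otherwise contradict $\msrke(\fg)=1$.

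\medskip

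In this formulation there is no genuine obstacle remaining: the theorem is essentially a corollary assembled from the two preceding lemmas, and the real analytic work — the reduction of $\rho(\fg)$ to $1$ via the injectivity statements of Proposition \ref{CR1}, and the delicate module-theoretic argument in Lemma \ref{CR2} ruling out the case $V \ne (0)$ using the Steinberg module and projectivity over $U_0(\fsl(2))$ — has already been carried out. The only point demanding minor care is verifying that the hypothesis $\mu(\fg)=1+\dim_k C(\fg)$ of Lemma \ref{CR2} is exactly the conclusion $\mu(\fg)=\dim_k C(\fg)+1$ of Lemma \ref{CR3}(2b), which is immediate; I expect the proof to be no more than two or three lines.
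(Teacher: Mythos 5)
Your proposal is correct and matches the paper's own proof exactly: Theorem \ref{CR4} is deduced by applying Lemma \ref{CR3}(2) to obtain $\rho(\fg)=1$ and $\mu(\fg)=1+\dim_kC(\fg)$, and then invoking Lemma \ref{CR2}. The additional remarks on choice of torus and on the non-vanishing of $C(\fg)$ in the $\fb^{-1}_{\fsl(2)}$ case are consistent with the paper and require no changes.
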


\begin{proof} In view of Lemma \ref{CR3}(2) we have $\rho(\fg)=1$ and $\mu(\fg)=1\!+\!\dim_kC(\fg)$. The result now follows from Lemma \ref{CR2}. \end{proof} 

\bigskip

\begin{Remark} It is well-known that the cohomology groups $\HH^2(\fg',k)$ vanish in case $\fg' \cong \fsl(2),\,\fb_{\fsl(2)}$. Hence the first two types of algebras 
mentioned in Theorem \ref{CR4} are direct products $\fg = \fg'\!\times\!C(\fg)$ of the restricted Lie algebras $\fg'$ and a toral centers, and with $p$-maps given by
\[(x,c)^{[p]} = (x^{[p]},c^{[p]}\!+\!f(x))  \ \ \ \ \ (x,c) \in \fg,\]
where $f : \fg' \lra C(\fg)$ is $p$-semilinear. \end{Remark}

\bigskip
\noindent
Using sandwich elements, the first author obtained the first part of the following result in his doctoral dissertation \cite{Ch16}:

\bigskip

\begin{Cor} \label{CR5} Suppose that $p\!\ge\!5$ and let $(\fg,[p])$ be such that $\msrke(\fg)=1$.
\begin{enumerate}
\item If $\fg$ is centerless, then $\fg \cong \fsl(2), \fb_{\fsl(2)}$.
\item If $\fg$ is not solvable, then $\fg/C(\fg) \cong \fsl(2)$.\end{enumerate}
\end{Cor}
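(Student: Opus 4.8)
The plan is to deduce Corollary \ref{CR5} directly from Theorem \ref{CR4}, using Corollary \ref{LR2} to verify that the hypothesis of generic torality is automatically satisfied. First I would invoke Corollary \ref{LR2}: since $\msrke(\fg)=1$ and $\fg$ is assumed to be either centerless (part (1)) or not solvable (part (2)), the algebra $\fg$ is generically toral. This lets me apply Theorem \ref{CR4}, which yields $\fg/C(\fg) \cong \fsl(2)$, $\fb_{\fsl(2)}$, or $\fb^{-1}_{\fsl(2)}$, with $C(\fg)\ne(0)$ in the last case.

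For part (1), assume $C(\fg)=(0)$. Then the alternative $\fg/C(\fg)\cong \fb^{-1}_{\fsl(2)}$ is immediately excluded, since Theorem \ref{CR4} forces $C(\fg)\ne(0)$ in that case. With trivial center, the quotient map is an isomorphism $\fg \cong \fg/C(\fg)$, so $\fg \cong \fsl(2)$ or $\fg \cong \fb_{\fsl(2)}$, as claimed. Here I would note that both $\fsl(2)$ and $\fb_{\fsl(2)}$ are genuinely centerless, so these cases are consistent.

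For part (2), assume $\fg$ is not solvable. Since $C(\fg)$ is a (central, hence solvable) toral ideal, the quotient $\fg/C(\fg)$ inherits non-solvability: if $\fg/C(\fg)$ were solvable, then $\fg$ would be an extension of a solvable algebra by the solvable ideal $C(\fg)$, forcing $\fg$ solvable, a contradiction. Among the three possibilities in Theorem \ref{CR4}, both $\fb_{\fsl(2)}$ and $\fb^{-1}_{\fsl(2)}$ are solvable, so they are ruled out. Hence $\fg/C(\fg)\cong\fsl(2)$, the unique non-solvable option.

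I expect both parts to be essentially immediate bookkeeping once Theorem \ref{CR4} and Corollary \ref{LR2} are in hand; the only point requiring a moment's care is verifying the solvability status of the model algebras $\fb_{\fsl(2)}$ and $\fb^{-1}_{\fsl(2)}$ and confirming the descent of non-solvability along the central quotient. The main conceptual content has already been absorbed into Theorem \ref{CR4}, so the corollary is purely a matter of discarding the inapplicable cases under each hypothesis.
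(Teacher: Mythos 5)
Your proposal is correct and follows exactly the paper's own argument: apply Corollary \ref{LR2} to get generic torality, then invoke Theorem \ref{CR4} and discard the cases excluded by the centerless/non-solvable hypothesis. The extra details you supply (solvability of the Borel-type algebras, descent of non-solvability along the central quotient) are just the bookkeeping the paper leaves implicit.
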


\begin{proof} (1) In view of Corollary \ref{LR2}, the Lie algebra $\fg$ is generically toral. Since $C(\fg)=(0)$, Theorem \ref{CR4} implies the result.

(2) Using Corollary \ref{LR2} again, we see that $\fg$ is generically toral. As $\fg/C(\fg)$ is not solvable, it follows from Theorem \ref{CR4} that
$\fg/C(\fg) \cong \fsl(2)$. \end{proof} 

\bigskip

\begin{Cor} \label{CR6} Suppose that $p\!\ge\!5$. Let $(\fg,[p])$ be a restricted Lie algebra. If $r(\fg)\!\ge\! 3$, then $\EE(2,\fg) \ne \emptyset$. \end{Cor}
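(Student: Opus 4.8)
The plan is to argue by contradiction, assuming $\EE(2,\fg)=\emptyset$, that is, $\msrke(\fg)\le 1$. Since the number of roots $r(\fg)$ is independent of the chosen torus of maximal dimension by Lemma \ref{RSD3}, the hypothesis $r(\fg)\ge 3$ is meaningful, and I would first dispose of the degenerate case $\msrke(\fg)=0$. By Chwe's theorem \cite{Ch65} this forces $\fg$ to be a torus, whence $R_\ft=\emptyset$ and $r(\fg)=0$, contradicting $r(\fg)\ge 3$.

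It therefore remains to treat the case $\msrke(\fg)=1$. Here the assumption $r(\fg)\ge 3\ge 2$ allows me to invoke Proposition \ref{LR1}, which guarantees that $\fg$ is generically toral. With this in hand, Theorem \ref{CR4} applies and yields
\[ \fg/C(\fg)\cong \fsl(2),\ \fb_{\fsl(2)},\ \fb_{\fsl(2)}^{-1}, \]
so the problem is reduced to counting roots of these three model algebras.

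The final step is the root count. Fixing $\ft\in\mstor(\fg)$, I would use that $\fg$ is generically toral to record $C(\fg)=\bigcap_{\alpha\in R_\ft}\ker\alpha$, so that every root vanishes on the central torus $C(\fg)$ and hence descends to a root of $\fg/C(\fg)$ relative to $\ft/C(\fg)$; conversely the canonical projection $\pi$ restricts to isomorphisms $\fg_\alpha\stackrel{\sim}{\lra}(\fg/C(\fg))_{\bar\alpha}$. This produces a bijection between $R_\ft$ and the roots of $\fg/C(\fg)$, so that $r(\fg)=r(\fg/C(\fg))$, the latter being well-defined since $\fg/C(\fg)$ is generically toral by Lemma \ref{Gt1}. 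A direct inspection of the defining relations of the three possible quotients shows that $\fb_{\fsl(2)}$ has a single root, while both $\fsl(2)$ and $\fb_{\fsl(2)}^{-1}$ have exactly two; in every case $r(\fg/C(\fg))\le 2$. Thus $r(\fg)\le 2$, contradicting the hypothesis $r(\fg)\ge 3$ and completing the proof.

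The main obstacle is the bookkeeping in the last paragraph: one must verify that passing to $\fg/C(\fg)$ neither creates nor destroys roots. This is precisely where generic torality enters, since it identifies $C(\fg)$ with the common kernel of all roots and thereby makes the correspondence $R_\ft\leftrightarrow R_{\ft/C(\fg)}$ transparent; the root counts for the three model algebras are then immediate from their structure.
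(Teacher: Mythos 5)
Your proposal is correct and follows exactly the paper's argument: rule out $\msrke(\fg)=0$ via Chwe, use Proposition \ref{LR1} to get generic torality from $r(\fg)\ge 2$, and apply Theorem \ref{CR4} to force $r(\fg)\le 2$. The only difference is that you spell out the root-count bookkeeping for the three quotient algebras, which the paper leaves implicit in the phrase ``Theorem \ref{CR4} thus yields $r(\fg)\le 2$''; your verification of the bijection $R_\ft\leftrightarrow R_{\ft/C(\fg)}$ is accurate.
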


\begin{proof} Suppose that $\EE(2,\fg)=\emptyset$. If $\msrke(\fg)=0$, then $\fg$ is a torus and $r(\fg)=0$, a contradiction. Hence $\msrke(\fg)=1$, and Proposition \ref{LR1}
shows that $\fg$ is generically toral. Theorem \ref{CR4} thus yields $r(\fg)\le 2$, a contradiction. \end{proof}

\bigskip

\subsection{The case $p\!=\!3$} It turns out that several results of the foregoing subsection do not hold for small $p$. Throughout this section, we assume that
$(\fg,[p])$ is a restricted Lie algebra, defined over an algebraically closed field $k$ of characteristic $\Char(k)\!=\!3$.

\bigskip

\begin{Lem} \label{CR7} Let $\fg$ be generically toral with $\msrke(\fg)=1$. If $\rho(\fg)\!\ge\!2$, then we have
\begin{enumerate} 
\item $\mu(\fg)\ge \rho(\fg)$ and $C(\fg)\ne (0)$.
\item $\fg$ is solvable.
\end{enumerate} \end{Lem}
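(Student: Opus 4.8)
The plan is to first pin down the root system and then run a dimension count. Fix $\ft\in\mstor(\fg)$ and, using $\rho(\fg)\ge 2$, choose $\alpha\in R_\ft$ with $\dim_k\fg_\alpha=\rho(\fg)$. Since $p=3$ we have $-2\alpha=\alpha$, so Proposition \ref{CR1}(4) forces $R_\ft\subseteq\{\alpha,-\alpha,-2\alpha\}=\{\alpha,-\alpha\}$. I would then rule out $R_\ft=\{\alpha\}$: in that case $\fg_{2\alpha}=\fg_{-\alpha}=(0)$, so $[\fg_\alpha,\fg_\alpha]\subseteq\fg_{2\alpha}=(0)$ and $\fg_\alpha$ is abelian, whence $\fg_\alpha\subseteq C_\fg(x_\alpha)$ for $x_\alpha\in\fg_\alpha\smallsetminus\{0\}$, contradicting $C_\fg(x_\alpha)\cap\fg_\alpha=kx_\alpha$ from Proposition \ref{CR1}(2). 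Hence $R_\ft=\{\alpha,-\alpha\}$, so $r_\ft=1$, and since $\ft$ is self-centralizing this gives $\mu(\fg)=\dim_kC(\fg)+1$.

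Next I would establish $\mu(\fg)\ge\rho(\fg)$. By Proposition \ref{CR1}(2) the map $\ad(x_\alpha)\colon\fg_\alpha\to\fg_{2\alpha}=\fg_{-\alpha}$ has kernel exactly $kx_\alpha$, so its image has dimension $\rho(\fg)-1$; thus $\dim_k\fg_{-\alpha}\ge\rho(\fg)-1$ and $\dim_k\fg=\mu(\fg)+\rho(\fg)+\dim_k\fg_{-\alpha}\ge\mu(\fg)+2\rho(\fg)-1$. On the other hand, $\msrke(\fg)=1$ gives $\EE(2,\fg)=\emptyset$ while $V(\fg)\ne(0)$, so $\fg$ is not $2$-saturated; Corollary \ref{CenSat3}(2), together with $p=3$ and $\mu(\fg)=\dim_kC(\fg)+1$ (which turns $p(2\mu(\fg)-1-\dim_kC(\fg))$ into $3\mu(\fg)$), yields $\dim_k\fg\le 3\mu(\fg)$. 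Comparing the two bounds gives $\mu(\fg)+2\rho(\fg)-1\le 3\mu(\fg)$, i.e.\ $\rho(\fg)\le\mu(\fg)$. Then $C(\fg)\ne(0)$ is immediate, since $\dim_kC(\fg)=\mu(\fg)-1\ge\rho(\fg)-1\ge 1$. This settles (1).

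For (2) I would pass to $\bar\fg:=\fg/C(\fg)$, which by Lemma \ref{Gt1} is generically toral and centerless with $\mu(\bar\fg)=r_\ft=1$; since $C(\fg)$ is central, $\fg$ is solvable if and only if $\bar\fg$ is. Assuming $\bar\fg$ non-solvable, its solvable radical $\rad(\bar\fg)$ is a $p$-ideal, so $\bar\fg/\rad(\bar\fg)$ is a nonzero semisimple restricted Lie algebra whose absolute toral rank is bounded by $\mu(\bar\fg)=1$. Invoking the classification of non-solvable one-sections (equivalently, of simple restricted Lie algebras of absolute toral rank one) in characteristic $3$, every candidate is either of type $\fsl(2)\cong W(1)$, which has $\rho=1$, or of Hamiltonian type $H(2)^{(2)}$, which is $2$-saturated by the Examples of Section 1 and hence has $\msrke\ge 2$. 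The goal is then to transfer these invariants back and conclude that neither is compatible with $\rho(\fg)\ge 2$ and $\msrke(\fg)=1$.

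The main obstacle is exactly this transfer. By Lemma \ref{CE1}, elementary abelian subalgebras pass \emph{injectively} but not surjectively through a central quotient, so the property $\msrke=1$ need not descend to the simple section; I therefore expect the cleaner route to be producing a two-dimensional elementary abelian subalgebra \emph{inside} $\fg$ directly. Concretely, a non-solvable $\fg$ supplies an $\fsl(2)$-triple $e\in\fg_\alpha$, $h\in\ft$, $f\in\fg_{-\alpha}$ with $\alpha(h)=2$, and $\rho(\fg)\ge 2$ provides a further root vector in $\fg_\alpha$ independent of $e$; analyzing $\fg$ as a module over this $\fsl(2)$ (using the $p=3$ representation theory, where the relevant weights $2,0,-2$ coincide with $-1,0,1$) should exhibit two commuting $p$-nilpotent elements, contradicting $\msrke(\fg)=1$. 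By contrast, the first two paragraphs are routine given Proposition \ref{CR1} and Corollary \ref{CenSat3}.
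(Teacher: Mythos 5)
Your treatment of part (1) is correct, though more roundabout than necessary. Once you know $R_\ft=\{\alpha,-\alpha\}$, Proposition \ref{CR1}(2) applied to $x_{-\alpha}\in\fg_{-\alpha}\!\smallsetminus\!\{0\}$ shows that $\ad(x_{-\alpha})\colon\fg_\alpha\to\ft$ is injective (its kernel is $C_\fg(x_{-\alpha})\cap\fg_\alpha=(\ker\alpha\oplus kx_{-\alpha})\cap\fg_\alpha=(0)$), which gives $\mu(\fg)=\dim_k\ft\ge\dim_k\fg_\alpha=\rho(\fg)$ directly, and then $\dim_kC(\fg)=\mu(\fg)-1\ge 1$. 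Your detour through the lower bound $\dim_k\fg\ge\mu(\fg)+2\rho(\fg)-1$ and the non-saturation estimate of Corollary \ref{CenSat3}(2) is also valid, so (1) stands.

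Part (2), however, is not a proof: you explicitly leave the decisive step open (the analysis "should exhibit two commuting $p$-nilpotent elements"), and the route you sketch has two concrete problems. First, the classification of simple restricted Lie algebras of absolute toral rank one that you want to invoke is only established for larger primes and cannot simply be quoted in characteristic $3$, which is exactly the case at hand; the reduction of $\bar\fg/\rad(\bar\fg)$ to $\fsl(2)$ or a Hamiltonian algebra is therefore unjustified. Second, even granting such a list, you yourself identify the transfer problem ($\msrke=1$ does not descend through quotients) and offer no argument that closes it. The paper's proof avoids classification entirely: assuming $[\fg_\alpha,\fg_{-\alpha}]\not\subseteq\ker\alpha$, it passes to $\fg':=\fg/\ker\alpha$, locates $\fh'\cong\fsl(2)$ inside $\fg'$, and uses that $\fh'\cong L(2)$ is the Steinberg module, hence injective over $U_0(\fh')$, to split $\fg'=\fh'\oplus W'$ with $W'\cong L(1)^n$ (no zero weights in $W'$ and $\Ext^1(L(1),L(1))=(0)$); the isomorphism $L(2)\otimes_kL(1)\cong P(1)$ then forces $n=0$, i.e.\ $\dim_k\fg_\alpha=1$, contradicting $\rho(\fg)\ge 2$. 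Hence $[\fg_\alpha,\fg_{-\alpha}]\subseteq\ker\alpha$, and solvability follows from the Engel--Jacobson theorem applied to the Lie subset $\fg_\alpha\cup\fg_{-\alpha}\cup\ker\alpha$, whose elements act nilpotently by Lemma \ref{Gt2}. This concrete argument is what your proposal is missing.
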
 

\begin{proof} We choose $\ft \in \mstor(\fg)$ and let $\alpha \in R_\ft$ be a root such that $\dim_k\fg_\alpha = \rho(\fg)$. Proposition \ref{CR1}(4) ensures that 
$R_\ft \subseteq \{\alpha,-\alpha\}$ and we therefore write
\[ \fg = \ft \oplus \fg_\alpha\!\oplus\!\fg_{-\alpha}.\]

(1) If $\fg_{-\alpha}=(0)$, then $\fg_{\alpha} \subseteq C_\fg(x_{\alpha})$ for $x_{\alpha} \in \fg_{\alpha}\!\smallsetminus\!\{0\}$, which contradicts 
Proposition \ref{CR1}(2). Given $x_{-\alpha} \in \fg_{-\alpha}\!\smallsetminus\!\{0\}$, Proposition \ref{CR1}(2) implies that the map
\[ \fg_{\alpha} \lra \ft \ \ ;  \ \ v \mapsto [x_{-\alpha}, v] \]
is injective, so that $\mu(\fg) = \dim_k \ft \ge \dim_k\fg_{\alpha} = \rho(\fg)$. Hence $\dim_kC(\fg) = \mu(\fg)\!-\!1 \ne 0$. 

(2) Suppose that $[\fg_{\alpha},\fg_{-\alpha}] \not \subseteq \ker \alpha$. We consider the Lie algebra $\fg':= \fg/\ker\alpha$. Writing $\ft = 
kt\!\oplus\!\ker\alpha$ for some toral element $t \in \ft$, we have
\[ \fg' = kt'\!\oplus\!\fg'_{\alpha}\!\oplus\! \fg'_{-\alpha},\]
and our current assumption in conjunction with Lemma \ref{Gt2} provides $x'_{\alpha} \in \fg'_{\alpha}$ and $x'_{-\alpha} \in \fg'_{-\alpha}$ such that
\[ \fh' := kt'\!\oplus \!kx'_{\alpha}\!\oplus\!kx'_{-\alpha}\]
is a $p$-subalgebra such that $\fh'\cong \fsl(2)$. We pick $t'$ such that $\alpha(t')=2$ and $x'_{\alpha},x'_{-\alpha}$ such that $[x'_{\alpha},
x'_{-\alpha}]=t'$. Note that the simple $3$-dimensional module $\fh'$ is the Steinberg module for the restricted enveloping algebra $U_0(\fh')$ of $\fh'$. Hence 
it is injective, and there is an $\fh'$-submodule $W' \subseteq \fg'$ such that
\[ \fg' = \fh'\!\oplus\!W'.\]
Since $0$ is not a weight of $W'$, every composition factor of $W'$ also enjoys this property. Hence every such factor is isomorphic to the standard module 
$L(1)$. In view of $\Ext^1_{U_0(\fsl(2))}(L(1),L(1))=(0)$ (cf.\ \cite[Thm.1]{Po68} which also holds for $p\!=\!3$), it follows that $W'\cong L(1)^n$ for some 
$n \in \NN_0$. Suppose that $n\ge 1$. The multiplication provides a surjective $\fh'$-linear map $\fh'\!\otimes_k\!\fg' \lra \fg'$. Thanks to \cite{BO}, $L(2)\!\otimes_k\!L(1)\cong P(1)$ is the projective cover of $L(1)$, and we obtain a 
surjection $P(1)^n \lra \fg'$ of $U_0(\fh')$-modules. As $\fg'$ is a semi-simple $U_0(\fh')$-module, we get a surjection $L(1)^n \lra \fg'$, which contradicts 
$L(2) \cong \fh' \subseteq \fg'$. Hence $n=0$, and we have $\dim_k\fg_{\alpha}=1$, a contradiction.

We conclude that $[\fg_{\alpha},\fg_{-\alpha}] \subseteq \ker\alpha$. Thanks to Lemma \ref{Gt2}, $L:=\fg_\alpha\cup\fg_{-\alpha}\cup\ker\alpha$ is a Lie subset of $\fg$ that acts on
$\fg$ by nilpotent transformations. The Engel-Jacobson Theorem thus shows that the $p$-ideal $\fn:=\langle L \rangle$ is nilpotent. Hence $[\fg,\fg] \subseteq \fn$ enjoys the same property, 
so that $\fg$ is solvable. \end{proof}

\bigskip

\begin{Example} Consider the $5$-dimensional vector space
\[ \fh := T(\fh)\!\oplus\! \fh_1\!\oplus\!\fh_{-1},\]
where $T(\fh)=kt_1\!\oplus\!kt_2$, $\fh_1:= kx_1\!\oplus\!kx_2$, and $\fh_{-1}=ky$. We define a Lie bracket via
\[ [T(\fh),\fh]=(0) \ \ ; \ \  [x_1,x_2] = y \ \ ; \ \ [x_i,y]=t_i \ \ \ \ \ \ \ \ \ 1 \le i \le 2,\]
and a $p$-map by means of
\[ \fh_i^{[p]}=\{0\} \ \ \text{and} \ \ t_i^{[p]} = t_i \ \ \ \ 1 \le i \le 2.\]
Direct computation shows that $V(\fh) = \fh_1\!\oplus\!\fh_{-1}$ and $\EE(2,\fh)=\emptyset$, so that $\msrke(\fh)=1$. 

Now set $\fh_0:= T(\fh)$. Then 
\[ d : \fh \lra \fh \ \ ; \ \ d|_{\fh_i} = i\id_{\fh_i}\]
is a derivation of $\fh$ such that $d(h^{[p]})= (\ad h)^{p-1}(d(h))$ for all $h \in \fh$ and $d^p=d$. Hence the semidirect sum $\fg:= kd\!\ltimes\!\fh$ is a generically 
toral restricted Lie algebra with $\rho(\fg)=2$ and $\EE(2,\fg)=\EE(2,\fh)=\emptyset$. \end{Example}

\bigskip
\noindent
Let $(\fg,[p])$ be generically toral with maximal torus $\ft \in \mstor(\fg)$. We say that a root $\alpha \in R_\ft$ is {\it solvable}, if its $1$-section $\fg[\alpha]$ is 
solvable. 

\bigskip

\begin{Prop} \label{CR8} Let $(\fg,[p])$ be a generically toral restricted Lie algebra such that $\msrke(\fg)=1$. If $\ft \in \mstor(\fg)$ is such that $R_\ft$ 
contains a non-solvable root, then $\fg/C(\fg) \cong \fsl(2)$. \end{Prop}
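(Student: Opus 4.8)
The plan is to fix a maximal torus $\ft \in \mstor(\fg)$ whose root set $R_\ft$ contains a non-solvable root $\alpha$, and to reduce to the situation already analyzed in Lemma \ref{CR2}. By definition, non-solvability of $\alpha$ means that the $1$-section $\fg[\alpha] = \bigoplus_{i \in \FF_p} \fg_{i\alpha}$ is not solvable. Since $p=3$, the only nontrivial $\FF_p$-multiples available are $\pm\alpha$, so $\fg[\alpha] = \ker\alpha \oplus \fg_\alpha \oplus \fg_{-\alpha}$, where I use $C_\fg(\ft)=\ft$ from Proposition \ref{CR1}(1). The first step is to pass to the quotient $\fg[\alpha]/\ker\alpha$ and observe that non-solvability forces $[\fg_\alpha,\fg_{-\alpha}] \not\subseteq \ker\alpha$: if instead the bracket landed in $\ker\alpha$, then Lemma \ref{Gt2} together with the Engel--Jacobson argument from the proof of Lemma \ref{CR7}(2) would exhibit $\fg[\alpha]$ as solvable, a contradiction.

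Next I would show $\rho(\fg)=1$. By Lemma \ref{CR7}, if $\rho(\fg)\ge 2$ then $\fg$ is solvable, which would make every $1$-section solvable, contradicting the existence of a non-solvable root. Hence $\rho(\fg)=1$, so each root space is one-dimensional and I may write $\fg_{i\alpha}=kx_{i\alpha}$. The heart of the argument is then to control the $\FF_3$-span of $R_\ft$, i.e.\ to rule out a second independent root $\beta \notin \FF_3\alpha$. Here I would invoke Lemma \ref{CR3}(1): since $p=3$ and (once a second root exists) $r(\fg)\ge 2$, the set $R_\ft \cup \{0\}$ is an $\FF_3$-vector space. The goal is to contradict non-solvability of $\alpha$ by producing, via the injective adjoint maps $f_{(\alpha,i,j)}, f_{(\beta,i,j)}$ of $(\ast)$ in Proposition \ref{CR1}, enough roots that the $1$-section $\fg[\gamma]$ for some combination $\gamma$ becomes forced into a configuration incompatible with the non-solvable $\alpha$. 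More concretely, I expect that once $R_\ft$ is a genuine two-dimensional $\FF_3$-space, one can locate a $1$-section isomorphic (modulo center) to $\fb_{\fsl(2)}^{\pm}$ rather than $\fsl(2)$, contradicting that the non-solvable section must be $\fsl(2)$-like.

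Once I have confined $R_\ft \subseteq \FF_3\alpha$, so that $r_\ft=1$ and hence $\mu(\fg)=1+\dim_k C(\fg)$ by the displayed identity $r_\ft=\mu(\fg)-\dim_k C(\fg)$, the situation matches exactly the hypotheses of Lemma \ref{CR2}: $\fg$ is generically toral with $\msrke(\fg)=1=\rho(\fg)$ and $\mu(\fg)=1+\dim_k C(\fg)$. That lemma yields $\fg/C(\fg)\cong \fsl(2),\fb_{\fsl(2)},\fb_{\fsl(2)}^{-1}$. To pin down $\fsl(2)$, I use non-solvability of $\alpha$: the images of $\fb_{\fsl(2)}$ and $\fb_{\fsl(2)}^{-1}$ are solvable, and the $1$-section $\fg[\alpha]$ surjects onto the corresponding $1$-section of $\fg/C(\fg)$, so a solvable quotient would make $\fg[\alpha]$ solvable (its kernel $\ker\alpha \subseteq C(\fg)$ being central, hence $\fg[\alpha]$ solvable iff its image is). Since $[\fg_\alpha,\fg_{-\alpha}]\not\subseteq\ker\alpha$ already rules out the Borel cases directly, this forces $\fg/C(\fg)\cong \fsl(2)$.

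The main obstacle I anticipate is the middle step: cleanly ruling out a second independent root $\beta$ while $p=3$. The $\FF_3$-arithmetic is delicate because $2\alpha=-\alpha$, so the usual characteristic-zero separation of weights collapses, and I must argue carefully that the root lattice structure forced by Lemma \ref{CR3}(1) produces a solvable rather than an $\fsl(2)$-type $1$-section along the direction of $\alpha$. I expect the cleanest route is to show that if $R_\ft \cup\{0\}$ is a two-dimensional $\FF_3$-space then \emph{every} $1$-section $\fg[\gamma]$ has $[\fg_\gamma,\fg_{-\gamma}]\subseteq \ker\gamma$ (by a counting or dimension argument on the injective maps $(\ast)$, tracking where root strings must terminate), which would make all roots solvable and contradict the hypothesis outright.
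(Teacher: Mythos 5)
Your overall skeleton is the right one and matches the paper's strategy at the ends: reduce to the case $R_\ft \subseteq \FF_3\alpha$, invoke Lemma \ref{CR2}, and then use $[\fg_\alpha,\fg_{-\alpha}] \not\subseteq \ker\alpha$ (which does follow from non-solvability via Lemma \ref{Gt2} and the Engel--Jacobson argument) to exclude the Borel cases. Your derivation of $\rho(\fg)=1$ via Lemma \ref{CR7}(2) is also fine, though the paper gets it more directly from Proposition \ref{CR1}(4). The problem is that the entire content of the proposition lives in the step you explicitly leave open --- ruling out a second root $\beta \in R_\ft \smallsetminus \FF_3\alpha$ --- and neither of the two strategies you sketch for it can be carried out as stated. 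The injective maps $(\ast)$ from Proposition \ref{CR1} only tell you that all eight nonzero elements of $\FF_3\alpha\oplus\FF_3\beta$ are roots with one-dimensional root spaces; they give no information about where the brackets $[\fg_\gamma,\fg_{-\gamma}]$ land inside $\ft$, so no ``counting argument on root strings'' will show that every $1$-section $\fg[\gamma]$ satisfies $[\fg_\gamma,\fg_{-\gamma}]\subseteq\ker\gamma$. Your other suggestion (finding some solvable $1$-section) would not contradict anything either, since the hypothesis only asserts that \emph{one} root is non-solvable.

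What the paper actually does here is considerably heavier. It forms the $p$-subalgebra $\fh := \bigoplus_{(i,j)\in\FF_3^2}\fg_{i\alpha+j\beta}$, uses the simplicity of $\fh[\alpha]/\ker\alpha \cong \fsl(2)$ (from Lemma \ref{CR2}) to prove that $\fh':=\fh/C(\fh)$ is a \emph{simple} Lie algebra of dimension $10$ with $\mu(\fh')=2$, and then shows that $\fh'$ is freely generated (perfectness forces $\ft'$ to be spanned by two brackets $[\fh'_{\gamma'},\fh'_{-\gamma'}]\oplus[\fh'_{\alpha'},\fh'_{-\alpha'}]$). At that point Lemma \ref{CE2} gives $\HH^2(\fh',k)=(0)$, Lemma \ref{CE1} transports $\msrke(\fh)=1$ down to $\msrke(\fh')=1$, and Lemma \ref{CenSat2} then yields $\dim_k\fh' \le 3(2\cdot 2-1)=9$, contradicting $\dim_k\fh'=10$. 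This chain --- simplicity, free generation, cohomology vanishing, lifting of $\EE(2,-)$ through a central extension, and the saturation bound --- is the missing idea in your proposal; without it (or a genuine substitute), the case of a two-dimensional root lattice is not excluded and the proof is incomplete.
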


\begin{proof} Let $\alpha \in R_\ft$ be non-solvable and suppose there in $\beta \in R_\ft\!\smallsetminus\!\{\alpha,-\alpha\}$. Owing to Proposition \ref{CR1}(4), 
we have $\rho(\fg)=1$ and Lemma \ref{CR3}(1) yields
\[ \FF_3\alpha\!\oplus\!\FF_3\beta \subseteq R_\ft \cup \{0\}.\] 
We consider the $p$-subalgebra
\[ \fh := \bigoplus_{(i,j) \in \FF_p^2} \fg_{i\alpha+j\beta}.\]
Then $\fh$ is generically toral with $\mu(\fh)=\mu(\fg)$, $\rho(\fh)=1$ and of $p$-rank $\msrke(\fh)=1$. We consider the root space decomposition
\[ \fh = \ft\!\oplus\!\bigoplus_{\gamma \in R'_\ft} \fh_\gamma,\]
where $R'_\ft = (\FF_3\alpha\!\oplus\!\FF_3\beta)\!\smallsetminus\{0\} \subseteq R_\ft$. As $\fh[\gamma] = \fg[\gamma]$ for all
$\gamma \in R'_\ft$, $\alpha$ is also a non-solvable root of $R'_\ft$. Lemma \ref{CR2} now implies $\fh[\alpha]/\ker\alpha \cong \fsl(2)$. 

Let $I \unlhd \fh$ be an ideal such that $C(\fh)\subseteq I$. Then the space
\[ I[\alpha] := \bigoplus_{i\in \FF_3} I_{i\alpha}\]
is an ideal of $\fh[\alpha]$. As $\fh[\alpha]/\ker\alpha$ is simple, this implies
\[ I[\alpha] \subseteq \ker \alpha \ \text{or} \ I[\alpha]=\fh[\alpha].\]
If $I[\alpha]=\fh[\alpha]$, then $\ft \subseteq I[\alpha] \subseteq I$ and $I=\fh$.

Alternatively, $I[\alpha] \subseteq \ker \alpha$, so that $I_\alpha=(0)=I_{-\alpha}$. Let $\gamma \in R'_\ft\!\smallsetminus\!\{\alpha,-\alpha\}$ be such that 
$I_{\gamma} \ne (0)$. As $R'_\ft = (\FF_3\alpha\!\oplus\!\FF_3\beta)\!\smallsetminus\{0\}$, there is $x \in \fh_{\alpha-\gamma}\!\smallsetminus\!\{0\}$. Thanks
to Proposition \ref{CR1}(2), the map 
\[ \fh_{\gamma} \lra \fh_{\alpha} \ \ ; \ \ v \mapsto [x,v]\]
is injective, while $\fh_\gamma = I_\gamma$ and $[x, I_\gamma] \subseteq I_\alpha=(0)$. Hence we have $I_\gamma = (0)$ for all $\gamma \in R'_\ft$.
This implies $I\subseteq \ft$ as well as $[I,\fh_\gamma] \subseteq I\cap\fh_\gamma=(0)$ for all $\gamma \in R'_\ft$, whence $I\subseteq \bigcap_{\gamma \in
R'_\ft} \ker\gamma = C(\fh)$. We thus have $I=C(\fh)$ whenever $I[\alpha]\subseteq \ker\alpha$.

We consider the restricted Lie algebra $\fh':= \fh/C(\fh)$. By the above, $\fh'$ is simple and Lemma \ref{Gt1} ensures that $\fh'$ is generically toral. Since 
$C(\fh)=\ker\alpha\cap\ker\beta$, we have $\mu(\fh')=2$. Let $\pi : \fh \lra \fh'$ be the canonical projection and put $\ft':= \pi(\ft) \cong \ft/C(\fh)$. Every root 
$\gamma \in R'_\ft$ gives rise to a root $\gamma' \in R'_{\ft'}$ such that $\gamma'\circ\pi=\gamma$. Thus, $R'_{\ft'}\cup\{0\} = \FF_3\alpha'\!\oplus\!\FF_3\beta'$. 
It follows that
\begin{enumerate}
\item[(a)] $\dim_k\fh'=10$, and
\item[(b)] $\fh'[\alpha']/\ker\alpha' \cong \fsl(2)$, so that $[\fh'_{\alpha'},\fh'_{-\alpha'}] \ne (0)$.
\end{enumerate} 
Since $\fh'$ is simple, we have $\fh'=[\fh',\fh']$ and there exists $\gamma' \in R'_{\ft'}$ such that $[\fh'_{\gamma'},\fh'_{-\gamma'}] \not \subseteq [\fh'_{\alpha'},
\fh'_{-\alpha'}]$. Thus, $\ft'= [\fh'_{\gamma'},\fh'_{-\gamma'}]\! \oplus \! [\fh'_{\alpha'},\fh'_{-\alpha'}]$ and the proof of Lemma \ref{CR3}(1) implies that $\fh'$ is generated 
by $\fh'_{\alpha'}\!\oplus\!\fh'_{\gamma'}$. Hence $\fh'$ is freely generated, and a consecutive application of Lemma \ref{CE2} and Lemma \ref{CE1} yields 
$\msrke(\fh')=\msrke(\fh)=1$. Now Lemma \ref{CenSat2} implies
\[ \dim_k\fh' \le 3(2\mu(\fh')\!-\!1)=9,\]
a contradiction. 

It follows that $R_\ft \subseteq \{\alpha,-\alpha\}$, so that $\fg=\fg[\alpha]$ and $\fg/C(\fg)\cong \fsl(2)$, by Lemma \ref{CR2}. \end{proof}  

\bigskip

\begin{Lem} \label{CR9} Let $(\fg,[p])$ be generically toral of $p$-rank $\msrke(\fg)=1$. Suppose that $\ft \in \mstor(\fg)$ is such that every root $\alpha \in R_\ft$
is solvable. Then we have
\[ [\fg_\alpha,\fg_{-\alpha}] \subseteq C(\fg)\]
for all $\alpha \in R_\ft$. In particular, if $C(\fg)=(0)$, then $\fg \cong \fb_{\fsl(2)}$. \end{Lem}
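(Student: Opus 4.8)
The plan is to work with a fixed $\ft \in \mstor(\fg)$ and the root space decomposition $\fg = \ft \oplus \bigoplus_{\alpha \in R_\ft}\fg_\alpha$, and to prove the inclusion $[\fg_\alpha,\fg_{-\alpha}] \subseteq C(\fg) = \bigcap_{\gamma \in R_\ft}\ker\gamma$ by fixing a root $\alpha$ and showing that the toral element $[x_\alpha,x_{-\alpha}]$ is killed by every root $\gamma \in R_\ft$. First I would invoke the hypothesis that $\alpha$ is solvable: the $1$-section $\fg[\alpha] = \bigoplus_{i\in\FF_p}\fg_{i\alpha}$ is then a solvable $p$-subalgebra with $\mu(\fg[\alpha])=\mu(\fg)$ and $\msrke(\fg[\alpha]) \le 1$. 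The key structural input should be that a solvable root forces $[\fg_\alpha,\fg_{-\alpha}] \subseteq \ker\alpha$; for $p \ge 5$ this is exactly the content of Lemma \ref{CR2} (which gives $\fg[\alpha]/C(\fg[\alpha]) \cong \fb_{\fsl(2)}$ or $\fb^{-1}_{\fsl(2)}$ in the solvable case, both of which have $[\fg_\alpha,\fg_{-\alpha}]\subseteq\ker\alpha$), and for $p=3$ it follows from the final paragraph of the proof of Lemma \ref{CR7}. So $[x_\alpha,x_{-\alpha}] \in \ker\alpha$, i.e.\ $\alpha([x_\alpha,x_{-\alpha}])=0$, which disposes of the root $\gamma=\alpha$ (and $\gamma=-\alpha$).

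The substantive part is to handle an arbitrary root $\gamma \in R_\ft \smallsetminus \{\alpha,-\alpha\}$ and show $\gamma([x_\alpha,x_{-\alpha}])=0$ as well. Here I would use Proposition \ref{CR1}(2), which says $C_\fg(x_\gamma) = (\ker\gamma)\oplus kx_\gamma$ for any nonzero $x_\gamma\in\fg_\gamma$. Set $t_{\alpha} := [x_\alpha,x_{-\alpha}] \in \ker\alpha \subseteq \ft$. For a nonzero $x_\gamma \in \fg_\gamma$ we have $[t_\alpha, x_\gamma] = \gamma(t_\alpha)\,x_\gamma$, so $x_\gamma$ is an eigenvector of $\ad t_\alpha$; I want to force the eigenvalue $\gamma(t_\alpha)$ to vanish. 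The natural route is to produce an elementary abelian plane, contradicting $\msrke(\fg)=1$, unless $\gamma(t_\alpha)=0$. Concretely, since $\gamma \ne \pm\alpha$, I would combine the injectivity statements of Proposition \ref{CR1}(2) with Lemma \ref{Gt2} (which gives $\fg_\gamma^{[p]} \subseteq \ker\gamma$, so that suitable root vectors are $p$-nilpotent modulo the torus) to locate two independent commuting $p$-nilpotent vectors.

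The cleanest mechanism I expect to use is the following: because $[\fg_\alpha,\fg_{-\alpha}]\subseteq\ker\alpha$ and $\alpha$ is solvable, the root vectors $x_\alpha,x_{-\alpha}$ can be chosen $p$-nilpotent, and they commute with the subtorus $\ker\alpha$. If $\gamma(t_\alpha)\ne 0$ for some $\gamma$, then $t_\alpha \notin \ker\gamma$, so $\gamma$ does not factor through $\ker\alpha$; this means $\gamma$ and $\alpha$ are $\FF_p$-independent in $\ft^\ast$. Then Corollary \ref{RSD2}, or directly the injective transition maps $f_{(\alpha,i,j)}\colon \fg_{i\alpha+j\gamma}\to\fg_{(i+1)\alpha+j\gamma}$ from the proof of Lemma \ref{CR3}, let me walk through the $\FF_p$-lattice spanned by $\alpha$ and $\gamma$ and find a root vector commuting with $x_\alpha$ in a weight space other than $\fg_\alpha$, contradicting $C_\fg(x_\alpha)=(\ker\alpha)\oplus kx_\alpha$ of Proposition \ref{CR1}(2). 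I would push this until I reach a pair spanning an element of $\EE(2,\fg)$, contradicting $\msrke(\fg)=1$. The main obstacle is bookkeeping: ensuring that the root vectors I extract are genuinely $p$-nilpotent (not merely $p$-nilpotent modulo the center) so that the two-dimensional commuting subspace I build actually lies in $V(\fg)$ and is elementary abelian; Lemma \ref{Gt2} together with the torality of $\ker\gamma$ and $C(\fg)$ should allow the usual adjustment $x \mapsto x - t$ by a toral element, exactly as in the proof of Proposition \ref{CR1}(2).

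Once $\gamma(t_\alpha)=0$ is established for all $\gamma \in R_\ft$, we conclude $t_\alpha = [x_\alpha,x_{-\alpha}] \in \bigcap_{\gamma\in R_\ft}\ker\gamma = C(\fg)$, giving the asserted inclusion $[\fg_\alpha,\fg_{-\alpha}]\subseteq C(\fg)$ for every $\alpha$. For the final clause, assume $C(\fg)=(0)$. Then all brackets $[\fg_\alpha,\fg_{-\alpha}]$ vanish, so every $\fg_\alpha \cup \fg_{-\alpha} \cup \ker\alpha$ acts nilpotently and $\fg$ is solvable by the Engel--Jacobson argument already used in the proof of Lemma \ref{CR7}. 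But a centerless generically toral Lie algebra with $\msrke(\fg)=1$ that is solvable must, by Theorem \ref{CR4} (for $p\ge 5$) together with the small-characteristic analysis, reduce to the case $\mu(\fg)=1$; since $C(\fg)=(0)$ rules out $\fb^{-1}_{\fsl(2)}$, the only surviving possibility is $\fg\cong\fb_{\fsl(2)}$. I would close by checking directly that $\fb_{\fsl(2)}$ is indeed centerless, generically toral, solvable, and satisfies $[\fg_\alpha,\fg_{-\alpha}]=\ker\alpha=(0)=C(\fg)$, confirming consistency.
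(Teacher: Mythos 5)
Your opening move is right and matches the paper: solvability of $\alpha$ together with Lemma \ref{CR2} applied to the $1$-section $\fg[\alpha]$ gives $[\fg_\alpha,\fg_{-\alpha}]\subseteq\ker\alpha$ (note that Lemma \ref{CR9} sits in the subsection devoted to $p=3$, so in the solvable case with $\fg_{-\alpha}\ne(0)$ one gets $\fg[\alpha]/\ker\alpha\cong\fb_{\fsl(2)}^{-1}$). The genuine gap is in the substantive step, showing $\gamma([x_\alpha,x_{-\alpha}])=0$ for $\gamma\notin\FF_3\alpha$. Your proposed mechanism --- produce a commuting pair of $p$-nilpotent root vectors by applying Corollary \ref{RSD2} or by walking through the lattice $\FF_3\alpha\oplus\FF_3\gamma$ with the injective maps $f_{(\alpha,i,j)}$ --- does not get off the ground: Proposition \ref{CR1}(4) forces $\rho(\fg)=1$ in this situation, so Corollary \ref{RSD2} is unavailable (it needs a root space of dimension $\ge 2$), and the transition maps only tell you that all of $(\FF_3\alpha\oplus\FF_3\gamma)\smallsetminus\{0\}$ consists of roots; they never produce an element of $C_\fg(x_\alpha)$ outside $(\ker\alpha)\oplus kx_\alpha$, precisely because each $\ad x_\alpha:\fg_{i\alpha+j\gamma}\lra\fg_{(i+1)\alpha+j\gamma}$ is injective between one-dimensional spaces. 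There is no contradiction to be extracted from the lattice combinatorics alone, and the difficulty is not ``bookkeeping'' about $p$-nilpotence.

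The paper's argument for this step is of a different nature, and none of its ingredients appear in your sketch. Passing to $\fg':=\fg/C(\fg)$, which has $\mu(\fg')=2$ and $\dim_k\fg'=10$, one first rules out $\dim_k\sum_{\lambda'}[\fg'_{\lambda'},\fg'_{-\lambda'}]=2$: in that case $\fg'$ would be freely generated, so $\HH^2(\fg',k)=(0)$ by Lemma \ref{CE2}, whence $\msrke(\fg')=\msrke(\fg)=1$ by Lemma \ref{CE1}, and Lemma \ref{CenSat2} would give $\dim_k\fg'\le 3(2\cdot 2-1)=9$, a contradiction. Hence all the spaces $[\fg'_{\lambda'},\fg'_{-\lambda'}]$ lie in a single line; combining this with $[\fg_\gamma,\fg_{-\gamma}]\subseteq\ker\gamma$ (solvability of every root) kills $[\fg'_{\gamma'},\fg'_{-\gamma'}]$ for $\gamma'$ independent of the distinguished $\lambda'$, and a Jacobi-identity computation then kills $[\fg'_{\lambda'},\fg'_{-\lambda'}]$ as well. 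You would need to supply this (or an equivalent) dimension-count argument. Finally, your treatment of the last clause leans on Theorem \ref{CR4}, which is a $p\ge 5$ statement and hence inapplicable here; the paper instead observes that $C(\fg)=(0)$ and $[\fg_\alpha,\fg_{-\alpha}]\subseteq C(\fg)=(0)$ place $\fg_{-\alpha}$ inside $C_\fg(x_\alpha)=(\ker\alpha)\oplus kx_\alpha$, forcing $\fg_{-\alpha}=(0)$, which by Lemma \ref{CR3}(1) forces $R_\ft=\{\alpha\}$, and then Lemma \ref{CR2} yields $\fg\cong\fb_{\fsl(2)}$.
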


\begin{proof} Suppose that there are $\alpha, \beta \in R_\ft$ are such that $\beta \in R_\ft\!\smallsetminus\!\{\alpha,-\alpha\}$. Thanks to Proposition \ref{CR1}(4), 
we then have $\rho(\fg)=1$.

We first assume that $R_\ft \subseteq \FF_3\alpha\!\oplus\!\FF_3\beta$. Lemma \ref{CR3}(1) then implies that $R_\ft\cup\{0\}=\FF_3\alpha\!\oplus\!\FF_3\beta$. We consider the Lie algebra 
$\fg':=\fg/C(\fg)$ with its corresponding root space decomposition relative to $\ft'\cong \ft/C(\fg)$.

If $\dim_k\sum_{\lambda' \in R_{\ft'}}[\fg'_{\lambda'},\fg'_{-\lambda'}] = 2$, then there are $\FF_3$-independent roots $\gamma',\delta' \in R_{\ft'}$ such that
\[ \ft' = [\fg'_{\gamma'},\fg'_{-\gamma'}]\!\oplus\![\fg'_{\delta'},\fg'_{-\delta'}].\]
Thus, $\fg'$ is freely generated by $\fg'_{\gamma'}\!\oplus\!\fg'_{\delta'}$, so that Lemma \ref{CE2} and Lemma \ref{CE1} yield $\msrke(\fg')=1$. Since $\dim_k\fg'=10$, this contradicts Lemma \ref{CenSat2}.

We therefore have $\dim_k\sum_{\lambda' \in R_{\ft'}}[\fg'_\lambda,\fg'_{-\lambda}] \le 1$. Hence there is $\lambda' \in R_{\ft'}$ such that
\[ (\ast) \ \ \ \ \ \ \ \  [\fg'_{\gamma'},\fg'_{-\gamma'}] \subseteq [\fg'_{\lambda'},\fg'_{-\lambda'}] \]
for every $\gamma' \in R_{\ft'}.$

Let $\gamma \in R_\ft$ be a root, $\gamma' \in R_{\ft'}$ be the corresponding root. Since $\gamma$ is solvable and $\fg'_{-\gamma'} \ne (0)$, it follows from Lemma \ref{CR2} that 
$\fg[\gamma]/\ker\gamma \cong \fb_{\fsl(2)}^{-1}$. Thus, $[\fg_{\gamma},\fg_{-\gamma}] \subseteq \ker\gamma$. The inclusion ($\ast$) now implies 
\[ [\fg'_{\gamma'},\fg'_{-\gamma'}] \subseteq \ker\gamma'\cap\ker\lambda' = (0)\]
for all $\gamma' \in R_{\ft'}\!\smallsetminus\FF_3\lambda'$. Hence there is a basis $\{\gamma',\lambda'\}$ of $R_{\ft'}\cup\{0\}$ such that
\[ [\fg'_{\sigma'},\fg'_{-\sigma'}] =(0)\]
for $\sigma' \in \{\gamma',\lambda'\}$. Now let $i,j \in \{1,-1\}$. Then we have
\begin{eqnarray*} [\fg'_{i\gamma'+j\lambda'},\fg'_{-i\gamma'-j\lambda'}]  & = & [\fg'_{i\gamma'+j\lambda'},[\fg'_{-i\gamma'},\fg'_{-j\lambda'}]] \subseteq 
 [[\fg'_{i\gamma'+j\lambda'},\fg'_{-i\gamma'}],\fg'_{-j\lambda'}] \!+\! [\fg'_{-i\gamma'},[\fg'_{i\gamma'+j\lambda'},\fg'_{-j\lambda'}]] \\
 & \subseteq & [\fg'_{j\lambda'},\fg'_{-j\lambda'}] \!+\! [\fg'_{-i\gamma'},\fg'_{i\gamma'}] = (0),
 \end{eqnarray*}
 whence
 \[  [\fg'_{\lambda'},\fg'_{-\lambda'}] = (0) \ \ \ \ \ \ \forall \ \lambda' \in R_{\ft'}.\]
 Consequently, 
 \[  [\fg_{\lambda},\fg_{-\lambda}] \subseteq  C(\fg) \ \ \ \ \forall \ \lambda \in R_{\ft}.\]  
Let $\alpha \in R_\ft$. If $R_\ft \subseteq \FF_3\alpha$, and $\rho(\fg)=1$, then our assertion follows directly from Lemma \ref{CR2}. If $\rho(\fg)\ge 2$, then the proof of Lemma \ref{CR7}
gives $[\fg_\alpha,\fg_{-\alpha}]\subseteq C(\fg)$.

Alternatively, let $\beta \in R_\ft\!\smallsetminus\! \FF_3\alpha$. Considering the $p$-subalgebra
\[\fg[\alpha,\beta] := \bigoplus_{(i,j) \in \FF_3^2}\fg_{i\alpha+j\beta},\] 
we obtain from the above
 \[ [\fg_\alpha,\fg_{-\alpha}]\subseteq \ker\alpha\cap\ker\beta.\]
Thus, $[\fg_\alpha,\fg_{-\alpha}] \subseteq \bigcap_{\beta \in R_\ft} \ker\beta = C(\fg)$, as desired. 

Let $\alpha \in R_\ft$. The assumption $C(\fg)=(0)$ now yields $\fg_{-\alpha} \subseteq C_\fg(\fg_{\alpha})$, so that Proposition \ref{CR1} implies $\fg_{-\alpha}
=(0)$. By Lemma \ref{CR3}(1), this can only happen for $R_\ft=\{\alpha\}$, so that Lemma \ref{CR2} yields the assertion. \end{proof}  

\bigskip

\subsection{Lie algebras of characteristic $p\!\ge\! 3$.}
We provide results that hold for $p\!\ge\!3$.

\bigskip

\begin{Thm} \label{CR10} Let $(\fg,[p])$ be a restricted Lie algebra such that $\msrke(\fg)=1$. Then the following statements hold:
\begin{enumerate}
\item If $\fg$ is centerless, then $\fg\cong \fb_{\fsl(2)}, \fsl(2)$.
\item If $\fg$ is perfect, then $\fg \cong \fsl(2)$.
\end{enumerate} \end{Thm}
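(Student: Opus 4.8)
The plan is to reduce both statements to the generically toral situation and then invoke the structure results already established, handling $p\!\geq\!5$ and $p\!=\!3$ separately, since the clean trichotomy $\fg/C(\fg)\cong\fsl(2),\fb_{\fsl(2)},\fb^{-1}_{\fsl(2)}$ is only available for $p\!\geq\!5$. In both parts $\fg$ is nonzero, as $\msrke(\fg)=1$ rules out a torus. In part (1) $\fg$ is centerless, and in part (2) the nonzero perfect algebra $\fg$ is non-solvable; either way Corollary \ref{LR2} shows that $\fg$ is generically toral. I then fix $\ft\in\mstor(\fg)$ with root space decomposition $\fg=\ft\oplus\bigoplus_{\alpha\in R_\ft}\fg_\alpha$, where $\ft=C_\fg(\ft)$ is the zero-weight space.

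For part (1) I would proceed as follows. If $p\!\geq\!5$, then $C(\fg)=(0)$ together with Corollary \ref{CR5}(1) immediately yields $\fg\cong\fsl(2),\fb_{\fsl(2)}$. If $p\!=\!3$, I distinguish according to whether $R_\ft$ contains a non-solvable root. If it does, Proposition \ref{CR8} gives $\fg/C(\fg)\cong\fsl(2)$, and since $C(\fg)=(0)$ we get $\fg\cong\fsl(2)$. If every root of $\ft$ is solvable, the final clause of Lemma \ref{CR9}, applied with $C(\fg)=(0)$, gives $\fg\cong\fb_{\fsl(2)}$. In all cases $\fg\cong\fsl(2),\fb_{\fsl(2)}$.

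For part (2) the first task is to show $\fg/C(\fg)\cong\fsl(2)$. When $p\!\geq\!5$ this is Corollary \ref{CR5}(2), as $\fg$ is not solvable. When $p\!=\!3$ I would exclude the case that all roots of $\ft$ are solvable: by Lemma \ref{CR9} this would force $[\fg_\alpha,\fg_{-\alpha}]\subseteq C(\fg)$ for every $\alpha$, so the zero-weight component of $[\fg,\fg]$, which equals $\sum_{\alpha\in R_\ft}[\fg_\alpha,\fg_{-\alpha}]$ because $\ft$ is abelian, would lie in $C(\fg)$. As $\fg$ is perfect, comparing zero-weight components would give $\ft\subseteq C(\fg)$, whence $R_\ft=\emptyset$ and $\fg=\ft$ is a torus, contradicting $\msrke(\fg)=1$. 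Therefore some root is non-solvable, and Proposition \ref{CR8} again gives $\fg/C(\fg)\cong\fsl(2)$.

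It then remains to upgrade $\fg/C(\fg)\cong\fsl(2)$ to $\fg\cong\fsl(2)$, and here perfectness does the work. I would invoke $\HH^2(\fsl(2),k)=(0)$, valid for every $p\!\geq\!3$ since the Chevalley--Eilenberg differential $\fsl(2)^\ast\lra\bigwedge^2\fsl(2)^\ast$ is an isomorphism whenever $p\neq 2$. As $C(\fg)$ is a central torus, the extension $(0)\lra C(\fg)\lra\fg\lra\fsl(2)\lra(0)$ splits in the category of ordinary Lie algebras, so $\fg=\fh\oplus C(\fg)$ with $\fh\cong\fsl(2)$ a subalgebra and $C(\fg)$ central. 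Then $[\fg,\fg]=[\fh,\fh]=\fh$, and perfectness forces $\fg=\fh$, hence $C(\fg)=(0)$ and $\fg\cong\fsl(2)$ (the restricted structure being unique, as $\fsl(2)$ is centerless). The main obstacle is the characteristic $3$ analysis: there no ready-made list of quotients $\fg/C(\fg)$ is available, and one must use perfectness itself to produce a non-solvable root before Proposition \ref{CR8} can be applied.
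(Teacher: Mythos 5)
Your proposal is correct and follows essentially the same route as the paper: reduce to the generically toral case via Corollary \ref{LR2}, invoke Theorem \ref{CR4}/Corollary \ref{CR5} for $p\ge 5$, split the $p=3$ case according to whether a non-solvable root exists (using Proposition \ref{CR8} and Lemma \ref{CR9}), and finish part (2) with the vanishing of $\HH^2(\fsl(2),k)$ together with perfectness. The only differences are cosmetic: the paper phrases the "all roots solvable" contradiction in part (2) as $\ft=C(\fg)$ forcing $\fg$ nilpotent rather than $\fg$ a torus, and it leaves the splitting argument as a one-liner where you spell it out.
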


\begin{proof} (1) In view of Corollary \ref{LR2}, the algebra $\fg$ is generically toral. Let $\ft \in \mstor(\fg)$ be a torus of maximal dimension. If $p\!\ge\! 5$, then Corollary \ref{CR5} 
implies our assertion. If $p\!=\!3$ and every root $\alpha \in R_\ft$ is solvable, then Lemma \ref{CR9} yields $\fg\cong\fb_{\fsl(2)}$. Alternatively, Proposition \ref{CR8} implies
$\fg \cong \fsl(2)$. 

(2) Since $\fg$ is perfect, it is not solvable, so that Corollary \ref{LR2} shows that $\fg$ is generically toral. For $p\!\ge\!5$, the result is a consequence of Theorem \ref{CR4}. 

Let $p\!=\!3$ and consider $\ft \in \mstor(\fg)$. If every root $\alpha \in R_\ft$ is solvable, then the Lemma \ref{CR9} implies
\[ \fg = [\fg,\fg] = \sum_{\alpha \in R_\ft}[\fg_\alpha,\fg_{-\alpha}]\!\oplus \!\bigoplus_{\alpha \in R_\ft}\fg_{\alpha} \subseteq C(\fg) \!\oplus \!\bigoplus_{\alpha \in R_\ft}\fg_{\alpha}.\]
Thus, $\ft=C(\fg)$, so that $\fg$ is nilpotent, a contradiction. 

Hence there is a non-solvable root $\alpha \in R_\ft$, and Proposition \ref{CR8} yields $\fg/C(\fg) \cong \fsl(2)$. Since $\HH^2(\fsl(2),k)=(0)$,
we have $\fg \cong \fsl(2)\!\oplus\!C(\fg)$, whence $\fg=[\fg,\fg] = \fsl(2)$. \end{proof} 

\bigskip

\begin{Cor} \label{CR11} Let $(\fg,[p])$ be a restricted Lie algebra of $p$-rank $1$ and such that $T(\fg)=(0)$. Then
\[ \fg \cong \fsl(2), \, \fb_{\fsl(2)}, \, kt\!\ltimes\!(kx)_p,\]
where $t$ is toral and $x\ne 0$ is $p$-nilpotent. \end{Cor}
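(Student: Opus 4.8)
The plan is to dispatch the case distinction according to whether $\fg$ is generically toral, using the general results already established. I would first record the two structural inputs that drive everything: for a generically toral algebra the center $C(\fg)$ is a torus (from the discussion of generically toral Lie algebras), and by Corollary \ref{LR2} an algebra with $\msrke(\fg)=1$ that is \emph{not} generically toral must be solvable with $C(\fg)\ne(0)$, whereupon Proposition \ref{LR1} forces $r(\fg)\le 1$. The generically toral case is then immediate: here $C(\fg)$ is a central torus, hence a toral ideal, so $C(\fg)\subseteq T(\fg)=(0)$. Thus $\fg$ is centerless and Theorem \ref{CR10}(1) gives $\fg\cong\fsl(2)$ or $\fg\cong\fb_{\fsl(2)}$.

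The work is in the remaining case, where $\fg$ is solvable, $C(\fg)\ne(0)$ and $r(\fg)\le 1$. The first step is to pin down the nullcone. Since $T(\fg)=(0)$, no nonzero central element has a nonzero semisimple part (such a part would generate a central subtorus inside $T(\fg)$), so every element of $C(\fg)$ is $p$-nilpotent; a suitable $[p]$-power produces $z\in C(\fg)\cap V(\fg)\smallsetminus\{0\}$. As $z$ is central, any $x\in V(\fg)$ spans with $z$ an elementary abelian subalgebra, so $\msrke(\fg)=1$ forces $V(\fg)=kz$ to be one-dimensional. Fixing $\ft\in\mstor(\fg)$ with Cartan subalgebra $\fh=C_\fg(\ft)=\fg_0$, the inequality $\dim V(\fh)\le\dim V(\fg)=1$ lets me invoke \cite[(4.3)]{Fa95} to write $\fh=\ft\oplus\fn$ with $\fn=(ky')_p$ its unipotent part; as $\fh$ is a nilpotent Cartan subalgebra, $\ft$ is central in $\fh$ and $\fh$ is abelian.

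Next I would bound the torus and compute the $p$-map on root vectors. If $r(\fg)=0$ then $\fg=\fh$ is nilpotent with $T(\fg)=(0)$, hence $p$-unipotent, and \cite[(4.3)]{Fa95} gives $\fg=(kx)_p$ outright (the degenerate case $t=0$). If $r(\fg)=1$ with root $\alpha$, then $\ker\alpha\subseteq\ft$ acts trivially on $\fg$, so it is a central subtorus and $\ker\alpha\subseteq T(\fg)=(0)$; hence $\alpha$ is injective and $\mu(\fg)=\dim\ft=1$, say $\ft=kt$. The decisive local fact is that $(\ad x)^2=0$ for every $x\in\fg_\alpha$, because $\ad x$ raises $\ft$-weights by $\alpha$ and $\fg_{2\alpha}=(0)$ (as $2\alpha\notin R_\ft=\{\alpha\}$ for $p\ge 3$). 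Consequently $(\ad x)^p=0$, so $x^{[p]}\in C(\fg)$; combining $x^{[p]}\in C(\fg)\subseteq\{p\text{-nilpotent}\}$ with $x^{[p]}\in\fg_0=kt\oplus\fn$ yields $x^{[p]}\in\fn$. In particular every element of $\fg_\alpha$ is $p$-nilpotent.

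From here the structure falls out: $\fm:=\fn\oplus\fg_\alpha$ is a $\ft$-stable $p$-subalgebra (using $[\fg_\alpha,\fg_\alpha]\subseteq\fg_{2\alpha}=(0)$ and $\fg_\alpha^{[p]}\subseteq\fn$), hence a $p$-ideal with $\fg=kt\ltimes\fm$. Since $\fg_\alpha\cup\fn$ is a Lie subset acting nilpotently, the Engel-Jacobson Theorem shows $\fm$ is nilpotent; its maximal torus is then characteristic, hence $\ft$-stable, and its weight components lie in the $p$-unipotent spaces $\fn$ and $\fg_\alpha$, so the torus vanishes. Thus $\fm$ is $p$-unipotent with $V(\fm)=V(\fg)\cap\fm=kz$ one-dimensional, and a final application of \cite[(4.3)]{Fa95} gives $\fm=(kx)_p$, so $\fg\cong kt\ltimes(kx)_p$. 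I expect the main obstacle to be precisely this last case --- verifying that the $p$-nilpotent ideal $\fm$ is genuinely cyclic --- which rests on the vanishing $(\ad x)^2=0$ (forcing $x^{[p]}$ central, hence $p$-nilpotent, hence in $\fn$) together with the one-dimensionality of $V(\fg)$; once $\fm$ is recognized as $p$-unipotent with $\dim V(\fm)=1$, the decomposition result closes the argument.
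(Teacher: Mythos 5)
Your proof is correct, and it reaches the classification by a genuinely more hands-on route than the paper in the second case. The paper's own argument is very short: it observes that $T(\fg)=(0)$ makes $C(\fg)$ unipotent, splits on $C(\fg)=(0)$ (where Theorem \ref{CR10}(1) applies) versus $C(\fg)\ne(0)$ (where the rank-one hypothesis forces $V(\fg)=V(C(\fg))$ to be a line), and then cites \cite[(3.2),(4.3)]{Fa95} as a black box giving the \emph{global} decomposition $\fg\cong kt\ltimes(T(\fg)\oplus(kx)_p)$ for any restricted Lie algebra with one-dimensional nullcone. Your dichotomy --- generically toral versus not --- is equivalent to the paper's, since a generically toral algebra has toral center, which here forces $C(\fg)\subseteq T(\fg)=(0)$; so the first branches coincide. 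The real divergence is that you only use the ``torus plus cyclic'' form of \cite[(4.3)]{Fa95} locally (for the Cartan subalgebra and for $p$-unipotent algebras) and reconstruct the global semidirect product yourself: Proposition \ref{LR1} to get $r(\fg)\le 1$, the weight argument giving $(\ad x)^2=0$ on root vectors and hence $\fg_\alpha^{[p]}\subseteq\fn$, Engel--Jacobson to see that $\fm=\fn\oplus\fg_\alpha$ is a $p$-unipotent ideal with $V(\fm)=kz$, and one last application of the cyclic decomposition. This costs about a page where the paper spends two lines, but it buys a self-contained derivation that makes visible \emph{why} the nullcone being a line forces the cyclic structure, and it avoids relying on the stronger global form of the cited result. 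Two cosmetic remarks: your generically toral branch re-establishes generic torality inside Theorem \ref{CR10}(1) (harmless redundancy), and both your argument and the paper's allow the degenerate $t=0$ (i.e.\ $\fg=(kx)_p$) inside the third type, so nothing is lost there.
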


\begin{proof} Since $T(\fg)=(0)$, the center $C(\fg)$ is unipotent. If $C(\fg)=(0)$, then Theorem \ref{CR10} yields $\fg \cong \fsl(2), \, \fb_{\fsl(2)}$. Alternatively,
$V(C(\fg)) \ne \{0\}$ and since $\rk_p(\fg)=1$, it follows that $V(\fg)=V(C(\fg))$ is a line. Hence
\[ \fg \cong kt\!\ltimes\!(T(\fg)\!\oplus\!(kx)_p,\]
with $t$ toral and $x$ $p$-nilpotent, cf.\ \cite[(3.2),(4.3)]{Fa95}. Since $T(\fg)=(0)$, $\fg$ is an algebra of the third type. \end{proof} 

\bigskip

\begin{Remark} In view of Brauer's results for finite groups \cite{Br76,Br79}, one would hope to determine $\fg/T(\fg)$ for any restricted Lie algebra of $p$-rank $\msrke(\fg)=1$.
Contrary to finite groups, where $\msrke(G/O_{p'}(G))=\msrke(G)$, the example of the Heisenberg algebra with toral center shows that the $p$-rank of 
$\fg/T(\fg)$ may exceed $\msrke(\fg)$. \end{Remark}

\bigskip

\section{Finite group schemes of $p$-rank $\msrke(\cG)\le 1$} \label{S:FG}
In this concluding section, we consider finite group schemes of $p$-rank $\le 1$. The reader is referred to \cite{Ja06} and \cite{Wa79} for basic facts concerning
algebraic group schemes. Given a finite group scheme $\cG$ over $k$, we denote by $k[\cG]$ and $k\cG:=k[\cG]^\ast$, the coordinate ring and the group algebra
(the algebra of measures) of $\cG$, respectively. In what follows, all subgroup schemes are supposed to be closed.

Following \cite{Fa14}, we refer to an abelian finite group scheme $\cE$ as \textit{elementary abelian}, provided there exist subgroup schemes $\cE_1, \ldots, \cE_n \subseteq \cE$ such that
\begin{enumerate}
\item[(a)] $\cE = \cE_1\cdots \cE_n$, and
\item[(b)] $\cE_i \cong \GG_{a(r_i)}, \ZZ/(p)$. \end{enumerate}
Here $\GG_{a(r)}$ denotes the $r$-th Frobenius kernel of the additive group $\GG_a = \Spec_k(k[T])$, while $\ZZ/(p)$ refers to the reduced group scheme, whose group of $k$-rational points is the cyclic group $\ZZ/(p)$. In view of \cite[(6.2.1),(6.2.2)]{Fa14}, a finite group scheme $\cE$ is elementary abelian if
and only if $k\cE \cong k[T_1,\ldots,T_r]/(T_1^p,\ldots,T_r^p)$. We call $\msrke(\cE):=r$ the $p$-rank of $\cE$, so that $\dim_kk\cE = p^{\msrke(\cE)}$. 

\bigskip

\begin{Definition} Let $\cG$ be a finite group scheme. Then  
\[ \msrke(\cG) := \max\{\msrke(\cE) \ ; \ \cE \subseteq \cG \ \text{elementary abelian}\}\]
is called the \textit{$p$-rank} of $\cG$. \end{Definition}

\bigskip

\begin{Remarks} (1) Let $(\fg,[p])$ be a restricted Lie algebra. Then $\cG_\fg:=\Spec_k(U_0(\fg)^\ast)$ is an infinitesimal group of height $1$ such that
$k\cG_\fg = U_0(\fg)$. It follows from the above that $\msrke(\cG_\fg)=\msrke(\fg)$, cf.\ also \cite[(II,\S7, ${\rm n}^{\rm o}$4)]{DG70}.

(2) If $G$ is a finite group, then the $p$-rank of $G$ coincides with that of its associated reduced group scheme $\cG_G:=\Spec_k(kG^\ast)$.
\end{Remarks}

\bigskip

\begin{Example} By way of illustration, we begin by considering the case, where $\cG=G_r$ is the $r$-th Frobenius kernel of an algebraic group $G$. If $G$ is not a torus,
then the arguments employed in the proof of Theorem \ref{AL1} provide a connected unipotent subgroup $e_k \ne U\subseteq G$. According to \cite[(IV,\S4,3.4)]{DG70}
$U$ contains a subgroup of type $\GG_a$, so that $\GG_{a(r)} \subseteq G_r$ and $r = \msrke(\GG_{a(r)}) \le \msrke(G_r)$. Consequently, $\msrke(G_r) \le 1$ only
if $r=1$ or $G_r \cong \GG_{m(r)}^n$ is diagonalizable. Since $G_1$ corresponds to $\fg=\Lie(G)$, Theorem \ref{AL1} provides the structure of Frobenius kernels
of $p$-rank $1$. \end{Example}

\bigskip
\noindent
We recall basic features from the Friedlander-Pevtsova theory of $p$-points \cite{FPe05}. Let $\fA_p:=k[T]/(T^p)$ be the $p$-truncated polynomial ring in
one variable. Given a finite group scheme $\cG$, an algebra homomorphism $\alpha : \fA_p \lra k\cG$ is called a {\it $p$-point}, provided
\begin{enumerate}
\item[(P1)] the map $\alpha$ is left flat, and
\item[(P2)] there exists an abelian unipotent subgroup $\cU \subseteq \cG$ such that $\im \alpha \subseteq k\cU$. \end{enumerate}
We let $\pt(\cG)$ be the set of $p$-points of $\cG$. Every $\alpha \in \pt(\cG)$ defines an exact functor
\[ \alpha^\ast : \modd k\cG \lra \modd \fA_p\]
between the respective categories of finite-dimensional modules, which, in view of (P1), sends projectives to projectives. Hence the full subcategory
$\modd^{\alpha^\ast}k\cG$, with objects being those $\cG$-modules whose pull-back $\alpha^\ast(M)$ along $\alpha$ is projective, contains all projective 
$\cG$-modules. Two $p$-points $\alpha,\beta \in \pt(\cG)$ are {\it equivalent} ($\alpha\! \sim\! \beta$), if  $\modd^{\alpha^\ast}k\cG =  \modd^{\beta^\ast}k\cG$. 
We denote by $\msP(\cG):=\pt(\cG)/\!\sim$ the space of $p$-points. Thanks to \cite[(3.10)]{FPe05}, $\msP(\cG)$ is a noetherian topological space, whose closed
sets are of the form
\[ \msP(\cG)_M := \{ [\alpha] \in \msP(\cG) \ ; \ \alpha^\ast(M) \ \text{is not projective}\} \ \ \ \ \ \ \ \ (M \in \modd \cG).\]
Let $\cH \subseteq \cG$ be a subgroup scheme. The canonical inclusion $\iota : k\cH \lra k\cG$, defines a continuous map
\[ \iota_\ast : \msP(\cH) \lra \msP(\cG) \ \ ; \ \ [\alpha] \mapsto [\iota\circ\alpha].\]
Recall that every finite group scheme $\cG$ is a semi-direct product
\[ \cG = \cG^0\!\rtimes\!\cG_{\rm red}\]
of an infinitesimal normal subgroup $\cG^0$ and a reduced subgroup, see \cite[(6.8)]{Wa79}. Given $r \in \NN_0$ , we denote by $\cG_r=(\cG^0)_r$ the $r$-th
Frobenius kernel of $\cG$. In particular, we have $\cG_0=e_k$.

Let $\cG$ be a finite group scheme of $p$-rank $0$. Then $\msrke(\cG^0)=0=\msrke(\cG_{\rm red})$ and $p\! \nmid\! {\rm ord}(\cG(k))$. Moreover, \cite[(IV,\S3,(3.7)]{DG70} shows that $\cG^0$ is diagonalizable. 
By Nagata's Theorem \cite[(IV,\S3, (3.6)]{DG70}, these properties of $\cG$ are equivalent to $k\cG$ being semi-simple. In that case, we say that $\cG$ is \textit{linearly reductive}.  

\bigskip

\begin{Lemma} \label{IG1} Let $\cG$ be a finite group scheme of $p$-rank $\msrke(\cG)=1$.
\begin{enumerate}
\item If $\cE \subseteq \cG$ is elementary abelian, then $\cE \subseteq \cG_1\!\rtimes\!\cG_{\rm red}$.
\item We have $\msP(\cG)=\iota_\ast(\msP(\cG_1\!\rtimes\!\cG_{\rm red}))$. \end{enumerate} \end{Lemma}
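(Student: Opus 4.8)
The plan is to establish (1) by passing to the infinitesimal quotient $\cG^0/\cG_1$, and then to deduce (2) from (1) together with the detection of $p$-points by elementary abelian subgroup schemes. For (1), set $H := \cG_1\rtimes\cG_{\mathrm{red}}$. Since $\cG_1=(\cG^0)_1$ is characteristic in $\cG^0$ and stable under $\cG_{\mathrm{red}}$, the subgroup scheme $H$ is normal in $\cG=\cG^0\rtimes\cG_{\mathrm{red}}$, with infinitesimal quotient $\cG/H\cong\cG^0/\cG_1$. Let $\cE\subseteq\cG$ be elementary abelian. As $\msrke(\cG)=1$, we have $\msrke(\cE)\le 1$, so the infinitesimal part $\cE^0$ in the decomposition $\cE=\cE^0\times\cE_{\mathrm{red}}$ satisfies $\dim_k k\cE^0=p^{\msrke(\cE^0)}\le p$; hence $\cE^0$ is infinitesimal of height $\le 1$ and therefore $\cE^0\subseteq(\cG^0)_1=\cG_1\subseteq H$. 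It remains to see that $\cE$ maps to $e_k$ in $\cG^0/\cG_1$. But the image of $\cE$ there is a quotient of $\cE/\cE^0\cong\cE_{\mathrm{red}}$, hence \'etale, while simultaneously being a subgroup scheme of the infinitesimal group $\cG^0/\cG_1$; being both \'etale and infinitesimal, it is trivial. Thus $\cE\subseteq H$.

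For (2), the inclusion $\iota_\ast(\msP(H))\subseteq\msP(\cG)$ holds trivially, so only the reverse inclusion needs proof. Here the essential input is that $\msP(\cG)$ is detected by its elementary abelian subgroup schemes: every $p$-point $\alpha\in\pt(\cG)$ is equivalent to one that factors through $k\cE$ for some elementary abelian $\cE\subseteq\cG$. This is the analogue for finite group schemes of the Quillen--Chouinard detection theorem, and follows from the theory of $p$-points, cf.\ \cite{FPe05}. Granting it, fix $[\alpha]\in\msP(\cG)$ and a corresponding elementary abelian $\cE\subseteq\cG$, so that $[\alpha]$ lies in the image of the map $\msP(\cE)\lra\msP(\cG)$ induced by $\cE\hookrightarrow\cG$. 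By (1) we have $\cE\subseteq H$, so this inclusion factors as $\cE\hookrightarrow H\hookrightarrow\cG$. The functoriality of $\msP(-)$ under inclusions of subgroup schemes then shows that the induced map $\msP(\cE)\lra\msP(\cG)$ factors through $\iota_\ast:\msP(H)\lra\msP(\cG)$, whence $[\alpha]\in\iota_\ast(\msP(H))$. As $[\alpha]$ was arbitrary, we obtain $\msP(\cG)\subseteq\iota_\ast(\msP(H))$ and hence equality.

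The main obstacle is precisely the detection step underlying (2). By (P2) a general $p$-point factors a priori only through an abelian unipotent subgroup scheme $\cU$, and when $\cG^0$ has height $\ge 2$ such a $\cU$ need not be contained in $H$, since its infinitesimal part may exceed $\cG_1$; it is therefore crucial that, up to equivalence, one can descend $\alpha$ all the way to an elementary abelian subgroup scheme, which by (1) then lands in $H$. Should a reference for this detection not be available in the exact form needed, the fallback is to argue directly on $\cU$: as $\msrke(\cU)\le 1$ its elementary abelian subgroup schemes have rank $\le 1$, and since $k\cU$ is a finite-dimensional commutative cocommutative local Hopf algebra one checks that every $p$-point of $k\cU$ is equivalent to one factoring through such a subgroup, after which (1) places the resulting $p$-point inside $H$.
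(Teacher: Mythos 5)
Your proof follows essentially the same route as the paper: for (1) you decompose $\cE=\cE^0\times\cE_{\rm red}$ and use $\msrke(\cE)\le 1$ to force $\cE^0$ to have height $\le 1$, hence $\cE^0\subseteq\cG_1$; for (2) you invoke exactly the detection result the paper cites (\cite[(4.2)]{FPe05}: every $p$-point is equivalent to one factoring through an elementary abelian subgroup) and then apply (1). So the substance matches.

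One step in (1) is wrong as stated, though the conclusion survives. You claim $H:=\cG_1\!\rtimes\!\cG_{\rm red}$ is normal in $\cG$ because $\cG_1$ is characteristic in $\cG^0$ and $\cG_{\rm red}$-stable; that only makes $\cG_1$ normal. In general $\cG_{\rm red}$ is not normal in $\cG=\cG^0\!\rtimes\!\cG_{\rm red}$, and neither is $H$ (e.g.\ $\GG_{a(2)}\!\rtimes\!\ZZ/(\ell)$ with $\ZZ/(\ell)$ acting by a nontrivial character: the commutator of $g\in\GG_{a(2)}$ with $h$ is a nonzero scalar multiple of $g$, which need not lie in $\GG_{a(1)}$). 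Consequently $\cG/H$ is only a coset space, not a group, and ``the image of $\cE$ is a subgroup scheme of the infinitesimal group $\cG^0/\cG_1$'' does not parse. The repair is immediate and is in effect what the paper's citation of \cite[(1.1)]{Fa06} accomplishes: since $k$ is perfect, $\cE_{\rm red}=(\cE)_{\rm red}\subseteq(\cG)_{\rm red}=\cG_{\rm red}\subseteq H$ directly, so $\cE=\cE^0\cdot\cE_{\rm red}\subseteq\cG_1\cdot\cG_{\rm red}$ without any quotient argument. (Alternatively, your \'etale-versus-connected point can be salvaged by working with the pointed coset scheme $\cG/H\cong\cG^0/\cG_1$, which has a single $k$-point.) Part (2) is fine as written.
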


\begin{proof} (1) We write $\cE = \cE^0\times \cE_{\rm red}$, with each factor being elementary abelian, cf.\ \cite[(6.2.2)]{Fa14}. Since $\cE^0$ does not contain
any reduced subgroup schemes $\ne e_k$, it follows that $\cE^0$ is a product of groups of type $\GG_{a(r_i)}$. In view of
\[ r_i = \msrke(\GG_{a(r_i)}) \le \msrke(\cE) \le 1,\]
each factor of $\cE^0$ is contained in $\cG_1$, whence $\cE^0 \subseteq \cG_1$. Consequently, \cite[(1.1)]{Fa06} yields $\cE \subseteq 
\cG_1\!\rtimes\!\cG_{\rm red}$.

(2) Let $x \in \msP(\cG)$. Thanks to \cite[(4.2)]{FPe05}, there exist $\alpha \in x$ and an elementary abelian subgroup $\cE \subseteq \cG$
such that $\im \alpha \subseteq \cE$. In view of (1), this shows that $x \in \iota_\ast(\msP(\cG_1\!\rtimes\!\cG_{\rm red}))$. \end{proof}  

\bigskip
\noindent
Given an infinitesimal group $\cG$, we denote by $\cM(\cG)$ the unique largest diagonalizable (multiplicative) normal subgroup of $\cG$.
In view of \cite[(7.7),(9.5)]{Wa79}, the group scheme $\cM(\cG)$ coincides with the multiplicative constituent of the center $\cC(\cG)$ of $\cG$.

For a commutative unipotent infinitesimal group scheme $\cU$, we let
\[ V_{\cU} : \cU^{(p)} \lra \cU\]
be the Verschiebung, cf.\ \cite[(IV,\S3,${\rm n}^{\rm o}$4),(II,\S7,${\rm n}^{\rm o}$1)]{DG70}. Following \cite{FV00}, we refer to
$\cU$ as being \textit{V-uniserial}, provided there is an exact sequence
\[\cU^{(p)} \stackrel{V_{\cU}}{\lra} \cU \lra \GG_{a(1)} \lra e_k.\]
If $\cU$ is $V$-uniserial, then the complexity $\cx_{\cU}(k)$ of the trivial $\cU$-module $k$ equals $1$, cf.\ \cite[(2.6)]{FV00}. Thus, if $\cE \subseteq \cU$ is
elementary abelian, then
\[ \msrke(\cE)=\cx_\cE(k) \le \cx_{\cU}(k) \le 1,\]
so that $\msrke(\cU) = 1$.   

The $r$-th Frobenius kernel of the multiplicative group $\GG_m := \Spec_k(k[X,X^{-1}])$ will be denoted $\GG_{m(r)}$. We let $T\subseteq \SL(2)$ be the standard
torus of diagonal matrices.

\bigskip

\begin{Proposition} \label{IG2} Let $\cG$ be an infinitesimal $k$-group of $p$-rank $\msrke(\cG)=1$. If $\cM(\cG)=e_k$, then
\[ \cG \cong \SL(2)_1T_r, \, \cU\!\rtimes\!\GG_{m(r)} \ \ \ (r\!\ge\!0), \]
where $\cU$ is a $V$-uniserial normal subgroup of $\cG$ and $\GG_{m(r)}$ acts faithfully on $\cU$. \end{Proposition}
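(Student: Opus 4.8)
The plan is to transfer the problem to the restricted Lie algebra $\fg := \Lie(\cG)=\Lie(\cG_1)$ and then reconstruct $\cG$ from $\fg$ together with the hypothesis $\cM(\cG)=e_k$. Since $\cG$ is infinitesimal, we have $\cG_{\rm red}=e_k$, so Lemma \ref{IG1}(1) places every elementary abelian subgroup of $\cG$ inside $\cG_1$; as $\msrke(\cG)=1$ this gives $\msrke(\fg)=\msrke(\cG_1)=1$. To invoke the Lie-theoretic classification I first show $T(\fg)=(0)$: the multiplicative part $\cM(\cG_1)$ of the center of $\cG_1$ is characteristic in $\cG_1$, and $\cG_1$ is normal in $\cG$, so $\cM(\cG_1)$ is a diagonalizable normal subgroup of $\cG$ and hence $\cM(\cG_1)\subseteq\cM(\cG)=e_k$. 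Passing to Lie algebras yields $T(\fg)=\Lie(\cM(\cG_1))=(0)$, so Corollary \ref{CR11} applies and $\fg\cong\fsl(2)$, $\fb_{\fsl(2)}$, or $kt\!\ltimes\!(kx)_p$.

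In the non-solvable case $\fg\cong\fsl(2)$ we have $C(\fg)=(0)$ (as $p\ge 3$), so the center $\cC(\cG)$ has trivial Lie algebra and is therefore trivial. The adjoint representation then embeds $\cG$ into $\GL(\fsl(2))$, and since $\cC(\cG)=e_k$ and $\cG_1=\SL(2)_1$, I would use the theory of infinitesimal groups with reductive Lie algebra to realize $\cG$ as an infinitesimal subgroup of $\SL(2)$ containing $\SL(2)_1$. It then remains to understand the infinitesimal subgroups between $\SL(2)_1$ and $\SL(2)_{\height(\cG)}$ of $p$-rank $1$: if $U^{\pm}\cong\GG_a$ are the root subgroups, then $\cG\cap U^{\pm}=\GG_{a(s)}$, and $s\ge 2$ would yield $\GG_{a(2)}\subseteq\cG$ and hence $\msrke(\cG)\ge\msrke(\GG_{a(2)})=2$, a contradiction. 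Thus $\cG\cap U^{\pm}=U^{\pm}_1\subseteq\SL(2)_1$, and only the maximal torus contributes higher Frobenius kernels; a big-cell argument then forces $\cG=\SL(2)_1(\cG\cap T)=\SL(2)_1 T_r$ with $r=\height(\cG)$.

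In the remaining cases $\fg$ is solvable. I would first argue that $\cG$ is then solvable: its unipotent radical $\cU$ is normal, and $\cG/\cU$ is diagonalizable, since a non-diagonalizable reductive constituent would contribute a non-solvable Lie subalgebra to $\fg$. Hence $\cG=\cU\!\rtimes\!\ccD$ with $\ccD$ diagonalizable, and $\cM(\cG)=e_k$ forces $\ccD$ to act faithfully on $\cU$. Every elementary abelian subgroup of $\cG$ is unipotent, hence maps trivially to $\ccD$ and lies in $\cU$; therefore $\msrke(\cU)=\msrke(\cG)=1$, so $\cx_{\cU}(k)=1$ and, by the structure theory of unipotent infinitesimal groups of complexity one, $\cU$ is $V$-uniserial. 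Finally, a diagonalizable group acting faithfully on a $V$-uniserial group embeds into the multiplicative part $\GG_m$ of its automorphism group, whence $\ccD\cong\GG_{m(r)}$ and $\cG\cong\cU\!\rtimes\!\GG_{m(r)}$.

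The main obstacle is the reconstruction step, i.e.\ the passage from $\fg$ back to $\cG$; the Lie-algebraic classification is the easy input. For $\fg\cong\fsl(2)$ the delicate points are justifying the embedding into $\SL(2)$ from the vanishing of $\cC(\cG)$ and classifying the intermediate infinitesimal subgroups, so that the $p$-rank bound genuinely confines all unipotent growth to height one while allowing the torus to grow. In the solvable case the analogous difficulty is to identify $\cU$ as $V$-uniserial from the complexity computation and to check that faithfulness of the diagonalizable action pins it down to a single factor $\GG_{m(r)}$; both rely on the structure theory of unipotent infinitesimal groups of complexity one rather than on Lemma \ref{IG1} or Corollary \ref{CR11}.
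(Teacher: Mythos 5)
Your opening reduction coincides with the paper's: $\cM(\cG_1)\subseteq\cM(\cG)=e_k$ gives $T(\fg)=(0)$ for $\fg=\Lie(\cG)$, and Corollary \ref{CR11} splits the analysis into $\fg\cong\fsl(2)$ and the solvable cases. The genuine gaps are in the two reconstruction steps, which you correctly identify as the main obstacle but do not actually carry out. In the solvable case the chain ``$\msrke(\cU)=1$, hence $\cx_\cU(k)=1$, hence $\cU$ is V-uniserial'' uses an inequality in the wrong direction: an elementary abelian subgroup of rank $r$ forces $\cx\ge r$, but bounding the complexity \emph{above} by the $p$-rank is a Quillen-type statement that fails in general for infinitesimal groups (already $\cx_{\fsl(2)}(k)=2$ while $\msrke(\fsl(2))=1$), and no structure theory is cited that supplies it for unipotent $\cU$. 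You also posit a decomposition $\cG=\cU\rtimes\ccD$ with $\ccD$ diagonalizable without justification. The paper proceeds differently: the Lie-algebra classification gives $\dim V(\fg)=1$; this is lifted to $\dim V(\cG)=1$ via the $p$-point and one-parameter-subgroup machinery (every $p$-point factors through an elementary abelian subgroup and hence through $\cG_1$ by Lemma \ref{IG1}, and \cite{FPe05}, \cite{SFB97}, \cite{SFB97(2)} identify $\dim\msP(\cG)$ with $\dim\Proj(V(\cG))$); then \cite[(2.7)]{FV00} delivers the entire structure $\cU\rtimes\GG_{m(r)}$, V-uniseriality and the multiplicative complement included, in one stroke.

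In the $\fsl(2)$ case the ``big-cell argument'' is the gap. For an infinitesimal subgroup $\SL(2)_1\subseteq\cG\subseteq\SL(2)$, knowing $\cG\cap U^{\pm}=U^{\pm}_1$ for the standard root subgroups does not yield $\cG=(\cG\cap U^-)(\cG\cap T)(\cG\cap U^+)$: infinitesimal subgroups need not be products of their intersections with the big-cell factors, and unipotent subgroups of $\cG/\cG_1$ need not arise from $\cG\cap U^{\pm}$. The paper's route is to show that every elementary abelian subgroup $\cE$ of $\cG/\cG_1$ is trivial: after conjugation $\cE$ lands in $U_2/U_1$, its preimage $\cH$ satisfies $\SL(2)_1\subseteq\cH\subseteq\SL(2)_1U_2$ and, if $\cE\ne e_k$, equals $\SL(2)_1U_2$ by an order count, forcing $\GG_{a(2)}\cong U_2\subseteq\cG$ and contradicting $\msrke(\cG)=1$. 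Hence $\cG/\cG_1$ is diagonalizable, and \cite[(5.4)]{FV03} gives $\cG=\SL(2)_1T_r$. (A smaller point: the injectivity of the adjoint representation should be deduced from $\cC_\cG(\cG_1)_1=\cC(\cG_1)=e_k$, not merely from $\cC(\cG)=e_k$.) In short, the skeleton matches the paper, but each place where you write ``a big-cell argument forces'' or ``by the structure theory of complexity one'' is exactly where the paper has to invoke \cite{FPe05}, \cite{SFB97}, \cite[(2.7)]{FV00} and \cite[(5.4)]{FV03}, and these steps cannot be replaced by the elementary arguments you sketch.
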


\begin{proof} Let $\cM(\cG_1)$ be the multiplicative center of $\cG_1$. In view of \cite[(IV,\S3,(1.1))]{DG70}, $\cM(\cG_1)$ is a normal,
multiplicative subgroup of $\cG$, so that $\cM(\cG_1) \subseteq \cM(\cG)=e_k$. As a result, the toral radical $T(\fg)$ of the Lie algebra
$\fg:= \Lie(\cG)$ is trivial and Corollary \ref{CR11} yields $\fg \cong \fsl(2)$ or $\dim V(\fg)=1$.

Suppose that $\dim V(\fg)=1$, and let $V(\cG)$ be the variety $k$-rational points of the scheme of infinitesimal one-parameter subgroups of $\cG$, cf.\ 
\cite{SFB97}. In view of \cite[(1.5)]{SFB97}, a closed embedding $\cH \hookrightarrow \cG$ yields a closed embedding $V(\cH) \hookrightarrow V(\cG)$.
It now follows from \cite[(3.8)]{FPe05}, that the canonical map
\[ \iota_\ast : \msP(\cG_1) \lra \msP(\cG)\]
is injective, while \cite[(4.11)]{FPe05} implies that $\iota_\ast$ is closed. Owing to \cite[(4.11)]{FPe05}, \cite[(6.8)]{SFB97(2)} and \cite[(1.6)]{SFB97}
we have
\[ \dim \Proj(V(\cG))= \dim \msP(\cG) \ \text{and} \ \dim V(\cG_1)=\dim V(\fg),\]
so that Lemma \ref{IG1} yields $\dim V(\cG)=1$. Now \cite[(2.7)]{FV00} provides a decomposition $\cG \cong \cU\!\rtimes\!\GG_{m(r)}$, where $\cU$ is a 
V-uniserial normal subgroup of $\cG$ and $r\ge 0$. Since the multiplicative center $\cM(\cU\!\rtimes\!\GG_{m(r)})$ is assumed to be trivial, the group $\GG_{m(r)}$ 
acts faithfully on $\cU$.

We therefore assume that $\cG_1 \cong \SL(2)_1$, so that $\cC(\cG_1)=e_k$. We consider the centralizer $\cC_\cG(\cG_1)$ of $\cG_1$ in $\cG$. 
Since $\cC_\cG(\cG_1)_1 = \cC(\cG_1)=e_k$, it follows that $\cC_\cG(\cG_1)=e_k$. The adjoint representation thus provides a closed embedding
\[ \varrho : \cG \hookrightarrow \mathcal{AUT}(\fsl(2)) \cong \PSL(2)\]
from $\cG$ into the automorphism scheme $\mathcal{AUT}(\fsl(2))$. As $\cG$ is infinitesimal, $\varrho$ factors through a suitable Frobenius kernel of
$\PSL(2)$. Since $\PSL(2)$ is a factor of $\SL(2)$ by an \'etale normal subgroup, the Frobenius kernels of $\SL(2)$ and $\PSL(2)$ are isomorphic, so 
that there exists a closed embedding
\[\cG \hookrightarrow \SL(2)_s,\]
where $s=\height(\cG)$ is the height of $\cG$. Note that $\cG_1 \subseteq \SL(2)_1$, while $\Lie(\cG)=\fsl(2)$. This implies that $\SL(2)_1 \subseteq \cG$. 
Hence we may assume that $\cG$ is a subgroup of $\SL(2)$ such that $\cG_1=\SL(2)_1$.

We denote by $F : \SL(2) \lra \SL(2)$ the Frobenius endomorphism of $\SL(2)$. Note that $F$ induces an embedding $\bar{F} : \cG/\cG_1 \hookrightarrow 
\SL(2)$. Let $\cE \subseteq \cG/\cG_1$ be elementary abelian. Since $\cG/\cG_1 \hookrightarrow \cG^{(p)}$ has $p$-rank $\le 1$, Lemma \ref{IG1} shows that $\cE$ 
has height $\le 1$. Hence there is $g \in \SL(2)(k)$ such that  $g\bar{F}(\cE)g^{-1} \subseteq U_1$, the first Frobenius kernel of the group $U \cong \GG_a$ of strictly upper 
triangular unipotent matrices. Since $g=F(h)$ for some $h \in \SL(2)(k)$, passage to $h\cG h^{-1}$ allows us to assume that $\cE \subseteq U_2/U_1$. Let $\pi : \cG \lra \cG/\cG_1$ 
be the canonical projection and put $\cH:= \pi^{-1}(\cE)$. Then $\height(\cH)\le 2$, and we have $\SL(2)_1 \subseteq \cH \subseteq \SL(2)_1U_2 \cap \cG$. Thus, if $\cE \ne e_k$, then $\cH =
\SL(2)_1U_2$, whence $\SL(2)_1U_2 \subseteq \cG$.
This implies 
\[ \msrke(\cG) \ge \msrke(\SL(2)_1U_2) \ge \msrke(U_2) \ge 2,\]
a contradiction. Consequently, $\cE=e_k$, so that  \cite[(IV,\S3,(3.7)]{DG70} ensures that $\cG/\cG_1$ is diagonalizable. According to \cite[(5.4)]{FV03} this implies that 
\[ \cG= \SL(2)_1T_r\]
for $r=\height(\cG)$.\end{proof} 

\bigskip

\begin{Remark} The V-uniserial group schemes were classified in \cite{FRV01}. \end{Remark}

\bigskip
\noindent
Given a finite group scheme $\cG$, we let $\cG_{\rm lr}$ be the largest linearly reductive normal subgroup of $\cG$, see \cite[(I.2.37)]{Vo}. If $\cG$ is a reduced finite group scheme,
then $\cG_{\rm lr}$ corresponds to the largest normal subgroup $O_{p'}(\cG(k))$ of the finite group $\cG(k)$, whose order is prime to $p$.

Being a characteristic subgroup of $\cG^0$, the multiplicative center $\cM(\cG^0)$ is a normal subgroup of $\cG$.  Hence $\cG_{\rm red}$ acts on $\cG^0/\cM(\cG^0)$, and we put
\[ \cC_\cG := {\rm Cent}_{\cG_{\rm red}}(\cG^0/\cM(\cG^0)).\]
In the sequel, we let $\cW_n$ be the group scheme of Witt vectors of length $n$, cf.\ \cite[(V,\S1,${\rm n}^{\rm o}$1.6)]{DG70}.

To a finite subgroup scheme $\cG \subseteq \SL(2)$, we associate the group $\PP(\cG):=\cG/(\cG\cap\cC(\SL(2)))$. A linearly reductive subgroup scheme $\cG \subseteq \SL(2)$
will be referred to as a \textit{binary polyhedral group scheme}, see \cite{Fa06} for a classification of these groups. 

An associative algebra $\Lambda$ is said to be \textit{representation-finite}, provided there are only finitely many isoclasses of indecomposable $\Lambda$-modules.  We say that $\Lambda$
is \textit{domestic}, provided $\lambda$ is not representation-finite, and there exist $(\Lambda,k[T])$-bimodules $X_1,\ldots, X_m$ that are free of finite rank over $k[T]$ such that for every 
$d\ge 1$, all but finitely many isoclasses of $d$-dimensional indecomposable $\Lambda$-modules are of the form $[X_i\!\otimes_{k[T]}\!k[T]/(T\!-\!\lambda)^j]$, for some $i \in \{1,\ldots,m\}, j\in \NN$
and $\lambda \in k$, cf.\ \cite{Ri80}.
 
\bigskip

\begin{Theorem} \label{IG3} Suppose that $\cG$ is a finite group scheme such that $\cG_{\rm lr}=e_k$. If $\msrke(\cG)=1$, then one of the
following alternatives occurs:
\begin{enumerate}
\item[(a)] $\cG = \cG_{\rm red}$, and the finite group $\cG(k)$ has $p$-rank $1$ and $O_{p'}(\cG(k))=\{1\}$.
\item[(b)] There is a binary polyhedral group scheme $\tilde{\cG} \subseteq \SL(2)$ such that $\cG \cong \PP(\SL(2)_1\tilde{\cG})$.
\item[(c)] $\cG = \cU\!\rtimes\!\cG_{\rm red}$, where $\cU$ is V-uniserial of height $\height(\cU)\ge 2$ and $\cG(k)$ is cyclic and such that $p\!\nmid\!{\rm ord}(\cG(k))$.
\item[(d)] $\cG = ((\cW_n)_1\!\rtimes\!\GG_{m(r)})\!\rtimes\!\cG_{\rm red}$, where $\cG(k)$ is abelian $p\!\nmid\!{\rm ord}(\cG(k))$. \end{enumerate} \end{Theorem}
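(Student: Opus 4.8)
The plan is to reduce the general statement to the infinitesimal and reduced cases already under control, using the structural decomposition $\cG = \cG^0\!\rtimes\!\cG_{\rm red}$ together with the hypotheses $\cG_{\rm lr}=e_k$ and $\msrke(\cG)=1$. First I would analyze the infinitesimal part. Since $\cM(\cG^0)$ is linearly reductive (being diagonalizable) and normal in $\cG$, the assumption $\cG_{\rm lr}=e_k$ forces $\cM(\cG^0)$ to be centralized by $\cG_{\rm red}$ in a controlled way; more precisely, one expects $\cM(\cG^0)\subseteq \cG_{\rm lr}$ whenever $\cG_{\rm red}$ acts trivially enough on it, so the interaction between $\cM(\cG^0)$ and $\cG_{\rm red}$ is where the four cases will branch. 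If $\cG^0=e_k$, then $\cG=\cG_{\rm red}$ is reduced, and Lemma \ref{IG1} together with the classical theory of finite groups with cyclic Sylow $p$-subgroups (via $\msrke(\cG(k))=1$) gives alternative (a), with $O_{p'}(\cG(k))=\{1\}$ coming directly from $\cG_{\rm lr}=e_k$.

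Next I would treat the case $\cG^0\ne e_k$ by invoking Proposition \ref{IG2} applied to $\cG^0$. The key point is that $\msrke(\cG^0)\le\msrke(\cG)=1$, and one must first arrange $\cM(\cG^0)=e_k$ in order to apply Proposition \ref{IG2}. Here the hypothesis $\cG_{\rm lr}=e_k$ is essential: since $\cM(\cG^0)$ is diagonalizable and characteristic in $\cG^0$, hence normal in $\cG$, if it were nontrivial and fixed by $\cG_{\rm red}$ it would contribute to $\cG_{\rm lr}$. The delicate step is showing that $\cC_\cG$ acts trivially enough that any nontrivial multiplicative center would violate $\cG_{\rm lr}=e_k$; I would pass to the quotient $\cG^0/\cM(\cG^0)$ and use that $\cG_{\rm red}$ acts on it through $\cG_{\rm red}/\cC_\cG$. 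Once $\cM(\cG^0)=e_k$ is established, Proposition \ref{IG2} yields $\cG^0\cong\SL(2)_1T_r$ or $\cG^0\cong\cU\!\rtimes\!\GG_{m(r)}$ with $\cU$ V-uniserial.

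In the $\SL(2)_1T_r$ subcase, I would show $T_r$ must be trivial (otherwise $\cM(\cG^0)\ne e_k$), so $\cG^0\cong\SL(2)_1$, and then analyze the semidirect product $\SL(2)_1\!\rtimes\!\cG_{\rm red}$. Because $\SL(2)_1$ has $\Lie$ algebra $\fsl(2)$ and $\cC_\cG$ acts trivially on $\cG^0$ modulo its center, the reduced part $\cC_\cG$ embeds into $\SL(2)$ centralizing $\SL(2)_1$, and one assembles $\SL(2)_1\cC_\cG$ inside $\SL(2)$; the binary polyhedral group scheme $\tilde\cG\subseteq\SL(2)$ arises as the preimage of $\cG_{\rm red}$, giving $\cG\cong\PP(\SL(2)_1\tilde\cG)$ as in alternative (b). In the $\cU\!\rtimes\!\GG_{m(r)}$ subcase, the same triviality-of-$\cM$ argument forces $\GG_{m(r)}$ to act faithfully (so $r\ge 0$) and leaves two possibilities for $\cU$: if $\height(\cU)\ge 2$ one gets case (c) with $\cG(k)$ cyclic of order prime to $p$; if $\cU$ has height one, identifying $\cU$ with a Frobenius kernel of a Witt-vector group yields $(\cW_n)_1\!\rtimes\!\GG_{m(r)}$ and the abelian case (d). In both cases $p\nmid{\rm ord}(\cG(k))$ follows because a $p$-part of $\cG_{\rm red}$ would raise the $p$-rank above $1$ via Lemma \ref{IG1}.

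I expect the main obstacle to be the precise bookkeeping of how $\cG_{\rm red}$ acts on the infinitesimal part and how that interacts with the vanishing of $\cG_{\rm lr}$: specifically, controlling $\cC_\cG$ and the Witt-vector structure of $\cU$, and ensuring the reduced quotient is cyclic (cases (a),(c)) versus abelian (case (d)) versus polyhedral (case (b)). The rank-$1$ constraint on $\cG(k)$ via cyclic Sylow subgroups, combined with the faithfulness of the $\GG_{m(r)}$-action, is what pins down the arithmetic conditions on ${\rm ord}(\cG(k))$, and threading these compatibilities through the four-way case split is where the real work lies.
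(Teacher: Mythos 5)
Your overall skeleton (decompose $\cG=\cG^0\!\rtimes\!\cG_{\rm red}$, force $\cM(\cG^0)=e_k$ from $\cG_{\rm lr}=e_k$, apply Proposition \ref{IG2} to $\cG^0$, then branch) matches the paper, but two steps are genuinely wrong or missing. First, your claim that in the subcase $\cG^0\cong\SL(2)_1T_r$ the torus $T_r$ ``must be trivial (otherwise $\cM(\cG^0)\ne e_k$)'' is false: $T_r$ is not normal (nor central) in $\SL(2)_1T_r$, and since any normal multiplicative subgroup of a connected group scheme is central while $\cC_{\SL(2)}(\SL(2)_1)=\mu_2$ is \'etale, one has $\cM(\SL(2)_1T_r)=e_k$ for \emph{every} $r$ --- indeed Proposition \ref{IG2} explicitly lists $\SL(2)_1T_r$ as an admissible output under the hypothesis $\cM(\cG)=e_k$, and groups of type (b) genuinely contain such $T_r$ with $r\ge 1$. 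Discarding $T_r$ therefore loses exactly the cases the theorem is about. The paper handles this branch quite differently: it observes that $k(\SL(2)_1T_r)$ has domestic representation type, invokes \cite[(4.4)]{Fa15} to conclude $k\cG$ is representation-finite or domestic, rules out the finite case, and then reads off type (b) from the classification \cite[(4.7)]{Fa15}; your sketch of ``assembling $\SL(2)_1\cC_\cG$ inside $\SL(2)$'' also cannot work as stated, since in this branch $\cC_\cG=e_k$ and the centralizer of $\SL(2)_1$ in $\SL(2)$ is just $\mu_2$.

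Second, the dichotomy underlying the whole case split --- that $\msrke(\cG^0)$ and $\msrke(\cG_{\rm red})$ cannot both equal $1$ --- is asserted but not proved. Your remark that a $p$-part of $\cG_{\rm red}$ ``would raise the $p$-rank above $1$ via Lemma \ref{IG1}'' is not an argument: Lemma \ref{IG1} only locates elementary abelians inside $\cG_1\!\rtimes\!\cG_{\rm red}$, and to get a rank-$2$ elementary abelian subgroup one must actually produce an infinitesimal $\GG_{a(1)}$ \emph{commuting} with a reduced $\ZZ/(p)$. The paper does this separately in each branch of Proposition \ref{IG2}: when $\dim V(\cG^0)=1$ it cites \cite[(3.1,2b)]{FV00}, and when $\cG_1\cong\SL(2)_1$ it lets a $p$-elementary abelian $\cE(k)\subseteq\cG_{\rm red}$ act on $\fsl(2)$, conjugates it into the strictly upper triangular matrices, and extracts a nonzero fixed point in $V(\fsl(2))$ to build $\GG_{a(1)}\!\times\!\cE$. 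Without this identity $(\ast)$ your four-way split is not exhaustive. Finally, the group-theoretic conclusions in (c) and (d) --- that $\cG(k)$ is abelian, respectively cyclic --- require the uniseriality argument on $k\cU\cong k[X]/(X^{p^n})$ (the $G$-action factors through a character $\zeta$, the radical filtration of $k\cG^0$ forces $(G,G)$ to act trivially, and faithfulness then gives $G$ abelian, with $\ker\zeta=\{1\}$ and hence $G$ cyclic when $\height(\cU)\ge 2$); these do not follow from the faithfulness of the $\GG_{m(r)}$-action alone.
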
 

\begin{proof} We first verify the following identity:
\[ (\ast) \ \ \ \ \ \ \msrke(\cG^0)\!+\!\msrke(\cG_{\rm red})=1.\]
By assumption, we have $\msrke(\cG^0) \le 1$ and $\msrke(\cG_{\rm red})\le1$. Suppose that both $p$-ranks are equal to $1$. Owing to \cite[(1.2)]{Fa06},
we have
\[ e_k = \cG_{\rm lr} = \cM(\cG^0)\!\rtimes\!O_{p'}(\cC_\cG).\]
Proposition \ref{IG2} now shows that $\cG^0 \cong \SL(2)_1T_r$ or $\dim V(\cG^0)=1$. In the latter case, the arguments of \cite[(3.1,2b)]{FV00} ensure the existence
of an elementary abelian subgroup $\cE \cong \GG_{a(1)}\!\times\!\ZZ/(p)$ of rank $2$, a contradiction. 

Hence we have $\cG_1 \cong \SL(2)_1$. Let $e_k \ne \cE \subseteq \cG_{\rm red}$ be elementary abelian. Then the $p$-elementary abelian group $\cE(k)$ acts on $\fsl(2)=\Lie(\cG)$. 
Since $\msrke(\cG)=1$, this action is faithful, so that $\cE(k) \subseteq \PSL(2)(k)$. Being a $p$-group, $\cE(k)$ is conjugate to a subgroup of strictly upper triangular matrices. 
Consequently, $V(\fsl(2))^{\cE(k)}\ne\{0\}$, so that there is a subgroup of type $\GG_{a(1)}\!\times\!\cE$ of rank $\ge 2$, a contradiction. Consequently,
\[ \msrke(\cG^0)\!+\!\msrke(\cG_{\rm red})=1,\]
as desired. \hfill $\diamond$ 

\smallskip
\noindent
If $\msrke(\cG^0)=0$, then $\cG^0$ is diagonalizable, so that $\cG^0 = \cM(\cG^0)=e_k$. Consequently,
\[ e_k = O_{p'}(\cC_\cG)=O_{p'}(\cG(k)),\]
and (a) holds.

Alternatively, identity ($\ast$) forces the group scheme $\cG_{\rm red}$ to be linearly reductive, implying $\cC_\cG=e_k$ and that $\cG_{\rm red}$ acts faithfully on 
$\cG^0$. By the same token, the group $\cG^0$ has $p$-rank $1$, while $\cM(\cG^0)=e_k$. Thanks to Proposition \ref{IG2}, we obtain
$\cG^0 \cong \SL(2)_1T_r$ or $\cG^0 \cong \cU\!\rtimes\!\GG_{m(r)}$, with $\GG_{m(r)}$ acting faithfully on $\cU$. 

Assuming $\cG^0=\SL(2)_1T_r$, it follows from \cite[(5.6)]{FV03} that the algebra $k\cG^0$ has domestic representation type. According to \cite[(4.4)]{Fa15}, $k\cG$ is 
representation-finite or domestic. In view of \cite[(2.7),(3.1)]{FV00}, the former alternative does not occur. We may now apply \cite[(4.7)]{Fa15} to see that $\cG$ is of type (b). 

We finally consider the case, where $\cG^0=\cU\!\rtimes\!\GG_{m(r)}$, where $\cU$ is V-uniserial. Note that the finite group $G:=\cG(k)$ acts on the unipotent radical $\cU$ of $\cG^0$ (cf.\ 
\cite[(IV,\S2,(3.3)]{DG70}) and hence on $k\cU$. However, $k\cU \cong k[X]/(X^{p^n})$ is uniserial, whence $\dim_k\Rad^i(k\cU)/\Rad^{i+1}(k\cU) = 1$. Consequently, $k\cU$ is a direct sum 
of one-dimensional $G$-modules, and there exists a group homomorphism $\zeta : G \lra k^\times$ such that $k\cU = k[x]$, where $x \in k\cU_\zeta$ is a weight vector such that $x^{p^n}=0$. 
This yields
\[ (\ast\ast) \ \ \ \ k\cU = \bigoplus_{i=0}^{p^n-1} k\cU_{i\zeta},\]
so that $\ker \zeta$ acts trivially on $k\cU$ and hence on $\cU$.

The group $G$ also acts on $\GG_{m(r)} \cong \cG^0/\cU$. There results an action of $G$ on the character group $X(\GG_{m(r)}) \cong \ZZ/(p^r)$ via automorphisms.This implies that $(G,G)$ acts 
trivially on $X(\GG_{m(r)})$ and thus on $\cG^0/\cU$. Observe that $\Rad(k\cG^0) = k\cG^0\Rad(k\cU)$ is the radical of $k\cG^0$, whose factor algebra is isomorphic to
$k(\cG^0/\cU)$, cf.\ \cite[(1.23)]{Fa10}. Since $\Rad(k\cG^0)^i = k\cG^0\Rad(k\cU)^i$, it follows that the multiplication induces surjections
\[ k(\cG^0/\cU)\!\otimes_k\!(\Rad(k\cU)^i/\Rad(k\cU)^{i+1}) \lra \Rad(k\cG^0)^i/\Rad(k\cG^0)^{i+1}\]
of $G$-modules, so that ($\ast\ast$) implies that the right-hand spaces are trivial $(G,G)$-modules. Since $(G,G)$ is linearly reductive, we conclude that $(G,G)$ acts trivially on $k\cG^0$ and hence on
$\cG^0$. As $G$ also acts faithfully on $\cG^0$, the group $G$ is abelian. 

If $\cU$ has height $\height(\cU)\ge 2$, then \cite[(3.1)]{FV99} implies that $\GG_{m(r)}$ acts trivially on $\cU$. Thus, $\cG^0=\cU$, so that ($\ast\ast$) yields $\ker \zeta =\{1\}$. Consequently,
$G$ is a subgroup of $k^\times$ and hence cyclic. This shows that (c) holds. 

If $\cU$ has height $1$, then \cite[(3.2)]{FV99} yields $\cU \cong (\cW_n)_1$ for some $n$, so that $\cG$ is of type (d). \end{proof}

\bigskip
\noindent
As a by-product, we obtain the following characterization of finite group schemes of finite- or domestic representation type.

\bigskip

\begin{Corollary} \label{IG4} Let $\cG$ be a finite group scheme. Then the following statements are equivalent:
\begin{enumerate}
\item $\cG$ is representation-finite or domestic.
\item $\msrke(\cG/\cG_{\rm lr})\!\le\!1$. \end{enumerate} \end{Corollary}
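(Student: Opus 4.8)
The plan is to deduce Corollary \ref{IG4} from the classification in Theorem \ref{IG3} after reducing to the case of trivial linearly reductive radical. The key point is that the passage $\cG \rightsquigarrow \cG/\cG_{\rm lr}$ affects neither side of the asserted equivalence. On the one hand, $\cG_{\rm lr}$ is normal and linearly reductive, so $k\cG_{\rm lr}$ is semi-simple and a Clifford-theoretic analysis relative to the associated semi-simple ideal shows that the blocks of $k\cG$ are, up to Morita equivalence, governed by those of $k(\cG/\cG_{\rm lr})$; hence $k\cG$ is representation-finite (resp.\ domestic) if and only if $k(\cG/\cG_{\rm lr})$ is. On the other hand, since $\cG_{\rm lr}$ has $p$-rank $0$ and every $p$-point factors through a unipotent abelian subgroup, the projection induces a homeomorphism $\msP(\cG)\cong\msP(\cG/\cG_{\rm lr})$, so that $\msrke(\cG)=\msrke(\cG/\cG_{\rm lr})$. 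Finally, the maximality of $\cG_{\rm lr}$ together with \cite[(1.2)]{Fa06} gives $(\cG/\cG_{\rm lr})_{\rm lr}=e_k$. It therefore suffices to prove, for a finite group scheme $\cH$ with $\cH_{\rm lr}=e_k$, that $k\cH$ is representation-finite or domestic if and only if $\msrke(\cH)\le 1$.

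First I would treat the implication $(2)\Rightarrow(1)$ for such an $\cH$. If $\msrke(\cH)=0$, then $\cH$ is linearly reductive, whence $\cH=\cH_{\rm lr}=e_k$ and the claim is trivial. If $\msrke(\cH)=1$, I would invoke Theorem \ref{IG3} and check that each of the four resulting families is representation-finite or domestic. For type (a), $\cH=\cH_{\rm red}$ is a finite group of $p$-rank $1$, so its Sylow-$p$-subgroups are cyclic and Higman's theorem \cite{Hi54} yields finite representation type. Type (b), $\cH\cong\PP(\SL(2)_1\tilde{\cG})$, is exactly the configuration arising in the proof of Theorem \ref{IG3}, where $k(\SL(2)_1T_r)$ is domestic by \cite[(5.6)]{FV03} and the passage to the extension by the linearly reductive $\cH_{\rm red}$ preserves domesticity by \cite[(4.4),(4.7)]{Fa15}. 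For types (c) and (d) the infinitesimal part $\cH^0$ is built from a V-uniserial group scheme, whose group algebra is uniserial (Nakayama) and hence representation-finite; combined with the faithful diagonalizable action and the semi-direct product with an abelian $p'$-group, the structure results of \cite{FV99,FV00} show that $k\cH$ is representation-finite or domestic.

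For the converse $(1)\Rightarrow(2)$ I would argue by contraposition. Suppose $\msrke(\cH)\ge 2$. By definition of the $p$-rank there is an elementary abelian subgroup scheme $\cE\subseteq\cH$ with $\msrke(\cE)\ge 2$, and hence a rank-two elementary abelian $\cE'\subseteq\cE$ with $k\cE'\cong k[T_1,T_2]/(T_1^p,T_2^p)$. Since $p\ge 3$, this algebra is of wild representation type. As $k\cH$ is free as a module over its Hopf subalgebra $k\cE'$, wildness propagates from $k\cE'$ to $k\cH$; consequently $k\cH$ is wild and, by the tame–wild dichotomy, neither representation-finite nor domestic. Together with the previous paragraph this yields the reduced equivalence, and the reduction of the first paragraph then gives Corollary \ref{IG4}.

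The hard part will be the reduction step: one must justify carefully that quotienting by $\cG_{\rm lr}$ preserves the coarse representation type (which rests on the block-theoretic comparison via the semi-simple ideal attached to $k\cG_{\rm lr}$) and that it preserves the $p$-rank (which I would phrase through the identification $\msP(\cG)\cong\msP(\cG/\cG_{\rm lr})$ from $p$-point theory). A second delicate point is the propagation of wildness from the free Hopf subalgebra $k\cE'$ up to $k\cH$; and within $(2)\Rightarrow(1)$ one must pin down the precise representation type of the families (c) and (d), which requires the explicit description of the group algebras of V-uniserial group schemes and of their diagonalizable extensions.
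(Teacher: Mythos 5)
Your overall strategy matches the paper's: reduce to $\cG':=\cG/\cG_{\rm lr}$ and then invoke Theorem \ref{IG3}. The paper performs the reduction by citing the isomorphism $\cB_0(\cG)\cong\cB_0(\cG')$ of principal blocks from \cite[(1.1)]{Fa06} together with the fact that representation-finiteness and domesticity are detected on the principal block (\cite[(3.1)]{FV00}, \cite[(4.7)]{Fa15}); your ``Clifford-theoretic'' comparison of all blocks up to Morita equivalence is vaguer and claims more than is needed (or, for general blocks, more than is literally true), so you should phrase the reduction through the principal block as the paper does. Where you genuinely diverge is in the implication $(1)\Rightarrow(2)$: the paper reads off $\msrke(\cG')\le 1$ from the structure theorems for representation-finite and domestic group schemes (\cite[(2.7),(3.1)]{FV00}, \cite[(4.7)]{Fa15}), whereas you argue by contraposition, embedding a rank-two elementary abelian $\cE'$ with $k\cE'\cong k[T_1,T_2]/(T_1^p,T_2^p)$ wild for $p\ge 3$ and propagating wildness up the free extension $k\cE'\subseteq k\cH$. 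This is a legitimate and somewhat more self-contained route, at the price of having to justify the ascent of wildness along Hopf subalgebras, which you correctly flag as a point requiring care. Your verification of $(2)\Rightarrow(1)$ case by case through the four families of Theorem \ref{IG3} is consistent with the citations the paper compresses into one line.

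One concrete error: your assertion that the homeomorphism $\msP(\cG)\cong\msP(\cG/\cG_{\rm lr})$ yields $\msrke(\cG)=\msrke(\cG/\cG_{\rm lr})$ is false on both counts. The $p$-rank is not determined by the space of $p$-points (it is the maximal rank of an elementary abelian subgroup, not $\dim\msP(\cG)+1$), and the equality itself fails: the paper's own Remark at the end of Section \ref{S:Rk1} points to the Heisenberg algebra with toral center, for which $\msrke(\fh)=1$ while $\msrke(\fh/C(\fh))=2$, and $C(\fh)$ corresponds to a multiplicative, hence linearly reductive, normal subgroup. Fortunately this claim is not load-bearing: statement (2) of the Corollary already refers to $\cG/\cG_{\rm lr}$, so nothing about preservation of $p$-rank under the quotient is required. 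Simply delete that sentence and the argument stands.
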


\begin{proof} Let $\cG':=\cG/\cG_{\rm lr}$. According to \cite[(1.1)]{Fa06}, there is an isomorphism $\cB_0(\cG) \cong \cB_0(\cG')$ between the principal blocks of $k\cG$ and $k\cG'$. A twofold 
application of \cite[(3.1)]{FV00} and \cite[(4.7)]{Fa15} now implies that $\cG$ is representation-finite or domestic if and only if $\cG'$ enjoys this property. The assertion now follows
from Theorem \ref{IG3} in conjunction with \cite[(2.7),(3.1)]{FV00} and \cite[(4.7)]{Fa15}. \end{proof} 

\bigskip

\bigskip

\end{document}